\documentclass[english,12pt]{amsart}

\usepackage{amssymb,mathtools,amsmath}

\usepackage[utf8]{inputenc}
\usepackage[T1]{fontenc}
\usepackage[english]{babel}

\usepackage[inline]{enumitem}
\setenumerate[1]{font=\color{black}, label=\textbf{\arabic*.}}

\usepackage{yhmath}
\usepackage{enumitem}
\setlist{topsep=-0pt} 

\makeatletter
\def\namedlabel#1#2{\begingroup
    #2%
    \def\@currentlabel{#2}%
    \phantomsection\label{#1}\endgroup
}
\makeatother
\usepackage{graphicx}
\usepackage{subcaption}

\usepackage{euscript}

\usepackage{fouriernc}
\usepackage[scaled=0.875]{helvet}
\usepackage{courier}

\usepackage{ marvosym }

\usepackage[babel=true]{csquotes}

\usepackage[marginparwidth=2cm]{geometry}
\usepackage[usenames,dvipsnames,svgnames,table]{xcolor}

\usepackage[normalem]{ulem}
\usepackage[marginpar]{todo}

\usepackage[colorlinks=true,urlcolor=DarkBlue,linkcolor=DarkRed,citecolor=DarkGreen,pagebackref=true]{hyperref}

\newcommand{\Rd}{\color{RedOrange}}

\newcommand{\Bk}{\color{black}}

\usepackage{tikz}
\usepackage{tikz-cd}
\usepackage{fp}
\usetikzlibrary{calc}
\usetikzlibrary{shapes,arrows}
\usetikzlibrary{fit,positioning}
\usetikzlibrary{patterns,decorations.pathreplacing}
\usetikzlibrary{arrows.meta}
\makeatletter
\def\calcLength(#1,#2)#3{%
\pgfpointdiff{\pgfpointanchor{#1}{center}}%
             {\pgfpointanchor{#2}{center}}%
\pgf@xa=\pgf@x%
\pgf@ya=\pgf@y%
\FPeval\@temp@a{\pgfmath@tonumber{\pgf@xa}}%
\FPeval\@temp@b{\pgfmath@tonumber{\pgf@ya}}%
\FPeval\@temp@sum{(\@temp@a*\@temp@a+\@temp@b*\@temp@b)}%
\FProot{\FPMathLen}{\@temp@sum}{2}%
\FPround\FPMathLen\FPMathLen5\relax
\global\expandafter\edef\csname #3\endcsname{\FPMathLen}
}
\makeatother

\usepackage{xkeyval}
\usepackage{moreverb}
\usepackage{epic}
\usepgfmodule{shapes,plot,decorations}
\usepackage{accents}

\usepackage{epsfig}
\usepackage{pinlabel}

\usepackage{nicefrac}

\newcommand{\Z}{\mathbb{Z}}

\newcommand{\R}{\mathbb{R}}

\newcommand{\Hb}{\mathbb{H}}

\newcommand{\eps}{\varepsilon}

\newcommand{\eucl}{\mathrm{eucl}}

\newcommand{\SL}{\mathrm{SL}}
\newcommand{\PSL}{\mathrm{PSL}}
\newcommand{\PSO}{\mathrm{PSO}}
\newcommand{\SO}{\mathrm{SO}}

\newcommand*{\ps}[1]{\mathrm{PSL}_{#1}(\mathbb{R})}

\newcommand{\LG}{\Lambda_{\Gamma}}
\newcommand{\G}{\Gamma}

\renewcommand{\O}{\Omega}

\newcommand{\dO}{\partial \Omega}

\newcommand{\dmax}{d_{\mathcal{O}_{\mathrm{max}}}}

\newcommand{\PP}{\mathbb{P}}

\newcommand{\Aut}{\textrm{Aut}}

\newcommand{\Vol}{\textrm{Vol}}
\newcommand{\Span}{\textrm{Span}}

\newcommand{\Stab}{\textrm{Stab}}

\newcommand{\Cc}{\mathcal{C}}

\definecolor{light-gray}{gray}{0.80}
\definecolor{dark-gray}{gray}{0.30}

\renewcommand{\leq}{\leqslant}
\renewcommand{\geq}{\geqslant}

\newcommand{\PGL}{\mathrm{PGL}}
\newcommand{\GL}{\mathrm{GL}}

\usepackage{aurical}

\newcommand{\mr}{\mathrm}
\newcommand{\mc}{\mathcal}
\newcommand{\mb}{\mathbb}
\newcommand{\dni}{\partial_{\mathrm{ni}}}
\newenvironment{sm}
    {\left(
    \begin{smallmatrix}
    }
    {
    \end{smallmatrix} 
    \right)}

\theoremstyle{plain}
\newtheorem{theorem}{Theorem}[section]

\newtheorem{proposition}[theorem]{Proposition}
\newtheorem{corollary}[theorem]{Corollary}
\newtheorem{lemma}[theorem]{Lemma}
\newtheorem{fact}[theorem]{Fact}

\theoremstyle{definition}

\theoremstyle{remark}
\newtheorem{remark}[theorem]{Remark}

\subjclass[2010]{20F55, 20F65, 20H10, 22E40, 51F15, 53C50, 57M50, 57S30}

\title[]{On 4-dimensional convex projective domains invariant by a lattice of $\mathrm{SL}_2 (\mathbb{R})$}

\author{
Pierre-Louis Blayac
}
\address{Univ Strasbourg, CNRS, IRMA - UMR 7501, Strasbourg, France}
\email{blayac@unistra.fr}

\author{
Ludovic Marquis
}
\address{Univ Rennes, CNRS, IRMAR - UMR 6625, F-35000 Rennes, France}
\email{ludovic.marquis@univ-rennes1.fr}

\definecolor{mylightgray}{RGB}{60,179,113}
\definecolor{mygray}{RGB}{119,136,153}

\definecolor{myconnection}{RGB}{47,79,79}

\begin{document}
\maketitle

\today

\begin{abstract}
This paper is a sequel to the Erratum \cite{Erratum} to the paper \cite{CM2014finitude}.
The main result of the Erratum was relating several notions of geometrical finiteness in round convex projective geometry and we prove here that our series of implications was sharp, by providing counterexamples to the implications that were not established.
Our counterexamples are 4-dimensional convex domains $\O$ acted on by $\rho (\G)$ where $\G$ is a lattice of $\SL_2 (\R)$ and $\rho$ is the irreducible representation of $\SL_2 (\R)$ of dimension $5$.
We give a description of all $\rho(\G)$-invariant convex domains, and in particular we construct one which is "close enough" to the convex hull $\mc C$ of the limit set of $\rho(\G)$ so that the Hilbert volume $\Vol_{\O/\G}(\mc C/\G)$ of the convex core is infinite.
We include an appendix with a smoothing procedure in the spirit of \cite{CLT18,DGK17}.
\end{abstract}

\section{Introduction}

Let $\O$ be an open subset of $\R\PP^d$ which is properly convex, i.e.~contained in an affine chart of $\R\PP^d$ where it is convex and bounded.
We say $\O$ is \emph{round} if its boundary is differentiable and strictly convex (any segment contained in the boundary must be reduced to a point).
Finally, let $\Gamma$ be a discrete subgroup of $\Aut(\O)\subset\PGL_{d+1}(\R)$, the group of projective transformations that preserve $\Omega$.
The domain $\O$ carries a natural $\Aut(\O)$-invariant proper metric called the Hilbert metric, defined in terms of projective cross-ratios.
This metric is very useful when studying $\G$ and $\O$; a first basic use is for instance to prove that $\G$ acts properly discontinuously on $\O$, making $\O/\G$ an orbifold called a convex projective orbifold.
For more detailed reminders on convex projective geometry and the Hilbert metric, see \cite[\S2]{CM2014finitude}.

Convex projective manifolds form an active field of research.
Its rich and beautiful pool of examples, see e.g.\ \cite{benoist_survey,HandbookHilbert}, makes it a good testing ground before the larger field of discrete subgroups of Lie groups.
The first basic example is the ellipsoid, which is isometric to the real hyperbolic space $\mb H^d$.
In the present paper we are interested in the following more subtle example: suppose $d$ is even and identify $\R^{d+1}$ with the space of degree $d$ homogeneous polynomials in two variables.
Let $\rho:\GL_2(\R)\to\GL_{d+1}(\R)$ be the irreducible representation acting as 
\begin{equation}\label{irreducible rep}
\rho\begin{sm}
a&b\\c&d
\end{sm}\cdot P(X,Y)
=P\left((X,Y)\cdot \begin{sm}
a&b\\c&d
\end{sm}\right)
=P(aX+cY,bX+dY).
\end{equation}
Then the image of $\rho$ preserves the cone of polynomials that take nonnegative values.
Obviously this cone is convex, closed and does not contain any line, so its projectivisation is properly convex and we denote by $\mc O_{\max}$ its interior (which is nonempty).
If $d=2$ then $\mc O_{\max}$ is a disc and we retrieve the real hyperbolic plane.
In higher dimension, note that the point $[X^d]\in\partial\mc O_{\max}$ is stabilised by the image under $\rho$ of triangular matrices, inducing a $\rho$-equivariant embedding from the circle $\GL_2(\R)/\{\begin{sm}*&*\\0&*\end{sm}\}$ into $\partial\mc O_{\max}$, whose image is denoted by $\Lambda$, and hence $\rho(\GL_2(\R))$ preserves $\Lambda$ as well as its convex hull, whose interior $\mc O_{\min}\subset\mc O_{\max}$ is another invariant properly convex domain.
In fact $\rho(\GL_2(\R))$ preserves many intermediate properly convex domains: the uniform $R$-neighborhoods $\mc O_R$, with $R>0$, of $\mc O_{\min}$ in $\mc O_{\max}$ for the Hilbert metric, which turn out to be round.

We are here interested in the case where $d=4$ and we study the action of $\rho(\G)$ where $\G$ is a noncocompact lattice of $\SL_2(\R)$.
The reason is that these give rise to the simplest example of round convex projective cusps that are not real hyperbolic cusps.
Indeed, inspired from real hyperbolic and negatively curved Riemannian geometry \cite{BowditchGF,Bowditch_GF}, there exist several notions geometrically finite \emph{round} convex projective orbifolds,
developped and studied by Crampon--Marquis \cite{CM2014finitude} and also studied by Cooper--Long--Tillman \cite{CLT2015cvxisom} in parallel.

Among the many characterisations of geometric finiteness in Riemannian geometry, one stands out, due to Beardon and Maskit (Property F2 in \cite{Bowditch_GF}), because it only uses the action of the group on the boundary at infinity, and hence it extends to the very general setting of convergence group actions.
Crampon--Marquis observe that this general setting also encompasses $\G$ acting on $\partial\O$ for $\O$ round and $\G\subset\Aut(\O)$ discrete and derived a notion of geometric finiteness that we will call here \emph{weakly geometrically finite} and denote \eqref{item:GF_on_boundary}.
They also introduced a stronger notion by adding a hypothesis, and we will call this notion \emph{strongly geometrically finite} and denote it \eqref{item:GF_on_Omega}.
Crampon--Marquis proved that the main object of study of the present paper $\mc O_R/\rho(\G)$, where $\G$ is a noncocompact lattice of $\SL_2(\R)$, is weakly but not strongly geometrically finite.

Bowditch's other characterisations of geometric finiteness were also adapted to round convex projective geometry and were shown to be equivalent to strong geometric finiteness.
However there was a mistake in the proof and the statement is not correct, as explained in \cite{Erratum} where a new, more complicated, diagram of implications between the various notions of geometrical finiteness is established, see Figure~\ref{fig:new pattern} (we provide extra explanations at the end of the introduction).
In this diagram, the implications that are not equivalences have been colored in red or green.
The converses of the two red implications were already disproved in \cite{Erratum}.
The goal of the present paper is to give counterexamples to converses of the four green implications.

\begin{figure}
\begin{tikzpicture}[>=latex,scale=1.1]
\tikzstyle{arrondi}=[rounded corners=4pt]
\tikzstyle{implique}=[-{Implies},double equal sign distance]
\tikzstyle{equivalent}=[implies-implies,double equal sign distance]
	\node (TF) at (2,1) {\eqref{item:region_parabolique}};
	\node (GF) at (2,-1) {\eqref{item:GF_on_Omega}};
	\node (HC) at (0,0) {\eqref{item:gf_cusps_lobat}};
	\node (GF+Gen) at (-2,0) {\eqref{item:GF_on_Omega}\&\eqref{item:generic}};
	
	\node (VF) at (4,-1) {\eqref{item:convex_core}${}_1$};
	\node (gf+hyp) at (4,1) {\eqref{item:GF_on_boundary}$\&$\eqref{item:convex_core_ghyp}};
	
	\node (PEC) at (6,-1) {\eqref{item:thick_part}};
	\node (PNC) at (8,-1){\eqref{item:non_cusp_part}};
	\node (gf) at (6,1) {\eqref{item:GF_on_boundary}};
	\node (CU) at (8,1) {\eqref{item:cusp_uniform}};
	
	\node (tf) at (8.6,0) {\eqref{item:region_parabolique_gen}};

  \draw[implique] (HC) -- (GF);
  \draw[equivalent] (TF) -- (GF);
  \draw [implique,green!60!black] (GF) -- (VF) node[midway,below,sloped]{\tiny green};
  \draw[implique,green!60!black] (GF) -- (gf+hyp) node[midway,below,sloped]{\tiny green};
  \draw[implique] (gf+hyp) -- (gf);
  \draw [implique] (VF) -- (PEC);
  \draw [implique,green!60!black] (GF+Gen) -- (HC) node[midway,below,sloped]{\tiny green};
    \draw [equivalent,green!60!black] (gf) -- (CU) node[midway,below,sloped]{\tiny green};
    \draw [equivalent,green!60!black]  (PNC) -- (gf) node[midway,below,sloped]{\tiny green};
  \draw [equivalent,green!60!black] (gf) to[out=335,in=180] node[midway,below,sloped]{\tiny green} (tf);
    \draw [equivalent] (PNC) -- (PEC);

\draw[arrondi] (1.5,-1.3) rectangle (2.5,1.3);
\draw[arrondi] (5.5,-1.3) rectangle (9.12,1.3);

\end{tikzpicture}
\caption{New pattern of implications: black arrows where correctly proved in the former paper, the green ones repair the mistakes of the former paper.}
\label{fig:new pattern}
\end{figure}

\begin{theorem}\label{thm:non_implication}
	Let $\O$ be a round convex set of $\R\PP^d$ and $\G \leqslant \Aut(\Omega)$. Then:
	\begin{enumerate}
		\item The condition  \eqref{item:convex_core}${}_1$ does not imply the condition  \eqref{item:GF_on_Omega}.
		\item The condition  \eqref{item:GF_on_boundary}$\&$\eqref{item:convex_core_ghyp} does not imply the condition  \eqref{item:GF_on_Omega}.
		\item The condition \eqref{item:GF_on_boundary} does not imply the condition \eqref{item:convex_core}${}_1$.
		\item The condition  \eqref{item:GF_on_boundary} does not imply the condition \eqref{item:GF_on_boundary}$\&$\eqref{item:convex_core_ghyp}.
	\end{enumerate}
	More precisely, consider the irreducible representation $\rho: \SL_2 (\R) \to \SL_5 (\R)$ with limit set $\Lambda$ (the image of the equivariant map $\SL_2(\R)/\{\begin{sm}*&*\\0&*\end{sm}\}\to\R\PP^4$), and let $\mc C$ be the convex hull of $\Lambda$ minus $\Lambda$ itself.
	Now let $\G$ be a noncocompact lattice of $\SL_2 (\R)$.
	Then the limit set of $\G$ is $\Lambda$ and we have the following.
	\begin{itemize}
	    \item[(a)] (\cite[Prop.\,10.6]{CM2014finitude}) For all $\rho(\G)$-invariant a round convex domain $\O\subset\R\PP^4$, the action of $\G$ on $\O$ is weakly but not strongly geometrically finite.
	
	    \item[(b)] For any $\rho(\SL_2(\R))$-invariant round convex domain $\mc O\subset\R\PP^4$, $\mc C$ is Gromov-hyperbolic for the Hilbert metric of $\mc O$ and the $1$-neighborhood of the convex core $\mc C/\rho(\G)$ of $\mc O/\rho(\G)$ is of finite volume for the Hilbert metric of $\mc O$.
	    
	    \item[(c)] There exist a $\rho(\G)$-invariant round convex domain $\O\subset\R\PP^4$ such that $\Cc$ is not Gromov-hyperbolic for the Hilbert metric of $\O$ and the convex core $\mc C/\rho(\G)$ of $\O/\rho(\G)$ is of infinite volume for the Hilbert metric of $\O$.
	\end{itemize}
\end{theorem}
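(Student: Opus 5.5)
The plan is to reduce items 1--4 to (a), (b), (c), and then prove (b) and (c); (a) is \cite[Prop.\,10.6]{CM2014finitude}.

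\textbf{Reduction.} Take $\O=\mc O_R$, a $\rho(\SL_2(\R))$-invariant round domain. By (a) the action is \eqref{item:GF_on_boundary} but not \eqref{item:GF_on_Omega}. By (b) it satisfies \eqref{item:convex_core}${}_1$ and \eqref{item:convex_core_ghyp}, which gives items 1 and 2. For items 3 and 4 we use instead the domain $\O$ of (c). It is \eqref{item:GF_on_boundary} by (a). Infinite volume of $\mc C/\rho(\G)$ forces infinite volume of its $1$-neighbourhood, so \eqref{item:convex_core}${}_1$ fails. Finally, \eqref{item:convex_core_ghyp} fails by (c). The statement that the limit set of $\rho(\G)$ is $\Lambda$ holds because $\G$ is a lattice: its limit set in $\partial\mb H^2$ is the whole circle, and the equivariant map transports it onto $\Lambda$.

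\textbf{Proof of (b).} Let $\mc O$ be $\rho(\SL_2(\R))$-invariant. Then $\mc C$ is a $\rho(\SL_2(\R))$-invariant closed convex subset on which $\SL_2(\R)$ acts cocompactly modulo a compact ``fibre''. More precisely, I would show that the orbit map $\SL_2(\R)\to\mc C$, $g\mapsto\rho(g)x_0$, is a quasi-isometry onto a coarsely dense subset, for a point $x_0$ fixed by $\rho(\SO_2)$. To do this I would use the $\rho(\mathrm{diag})$-invariant ``slice'': $\mc C$ modulo $\rho(\SL_2(\R))$ should have bounded Hilbert diameter. Hyperbolicity of $\mc C$ for $d_{\mc O}$ then follows from hyperbolicity of $\mb H^2$ via the Švarc--Milnor lemma applied to a proper cocompact isometric action.

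For the volume statement, I would compare the Busemann/Hilbert volume on the $1$-neighbourhood $N_1(\mc C)$ with Haar measure. Because $\SL_2(\R)$ acts properly and cocompactly on $N_1(\mc C)$ (whose closure in $\mc O$ is a thickened orbit), the invariant Hilbert volume on $N_1(\mc C)$ is a constant multiple of a Haar-type measure pushed forward. Hence $\Vol(N_1(\mc C)/\rho(\G))\le C\cdot\mathrm{covol}(\G)<\infty$.

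The delicate point here is cocompactness of $\rho(\SL_2(\R))$ on $\mc C$, that is, that points of $\mc C$ stay at bounded distance from the orbit of $x_0$. I would check this by writing a point of $\mc C$ as a convex combination of points of $\Lambda$ and using the $KAK$ decomposition. Roundness of $\mc O$ keeps $\Lambda\subset\partial\mc O$ from creating flat directions.

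\textbf{Proof of (c), the main obstacle.} Here one must build a $\rho(\G)$-invariant round $\O$ that is very close to $\mc C$ near the cusps. Fix a cusp with parabolic generator $u=\rho\begin{sm}1&1\\0&1\end{sm}$, a unipotent with a single Jordan block of size $5$. Near the fixed point $p=[X^4]$, I would describe all $\langle u\rangle$-invariant convex domains in coordinates adapted to the Jordan block, and choose $\O$ to coincide with $\mc O_{\max}$ or $\mc O_R$ away from horoball neighbourhoods of the cusps. Inside those neighbourhoods, $\O$ should be a $\langle u\rangle$-invariant domain whose boundary approaches $\partial\mc C$ fast, so that the Hilbert metric of $\O$ becomes larger than that of $\mc O_R$ on $\mc C$. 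The result should be a region of $\mc C$ that contains long flat-like strips, which destroys hyperbolicity, and whose cross-sections have Hilbert volume decaying too slowly, which gives infinite volume.

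Gluing is done $\G$-equivariantly on the disjoint cusp neighbourhoods. Convexity and roundness are restored by the smoothing procedure of the appendix, in the spirit of \cite{CLT18,DGK17}. The key estimate, which I expect to be hardest, is a lower bound for the volume of $\mc C\cap(\text{cusp})/\langle u\rangle$ in $d_\O$. I would compute it with explicit one-parameter families $t\mapsto\rho(\exp(t\,\mathrm{diag}))$ and compare the Busemann density with the cross-ratio distances to $\partial\O$, aiming to show that the resulting integral diverges.
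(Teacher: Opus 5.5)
Your reduction of items 1--4 to (a), (b), (c) is correct, and so is your proof of (b); both are essentially the paper's. For (b) the paper also uses the proper cocompact action of $\rho(\SL_2(\R))$ on $\mc C$ and \v{S}varc--Milnor. For the volume, the precise form of your ``Haar-type'' remark is this: push the Hilbert measure of the $1$-neighbourhood forward by the proper equivariant projection to $\mb H^2$, where it must be a multiple of the area, and then descend to $\mb H^2/\G$. The genuine gap is (c), which is the real content of the theorem. Your plan lacks the ideas that make the construction work, and two of its steps fail as stated.

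First, letting the boundary approach $\partial\mc C$ ``fast'' is not an option, because there is a two-sided constraint. Parametrise $\partial\O\smallsetminus\Lambda$ by its height $u_\O$ over $\dni\mc C$. Coning a boundary point with an endpoint in $\Lambda$ of its face shows that $\log u_\O$ is $1$-Lipschitz along faces (Lemma~\ref{lem functional ineq}), so $u_\O\ge Ce^{-f}$ for a depth function $f$. On the other hand, $\partial\O$ is differentiable at a parabolic point $p$ only if $u_\O(r(t))e^{t}\to\infty$ along rays of $\dni\mc C$ into $p$ (Lemma~\ref{differentiability}). So you must target an intermediate rate: the paper takes $u\asymp e^{\chi(f)}$ with, e.g., $\chi(t)=-t/2-\log 100$.

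Second, your gluing does not produce a convex set, and the appendix cannot repair this. Lemma~\ref{lem smoothing} takes a convex set that is already $\mc C^1$ off $\Lambda$ and only makes it Hessian-convex. You have to take a convex hull, and the crux is proving that the hull of the graph of a prescribed height $u$ does not rise far above it. The paper does this with the explicit estimate of Lemma~\ref{lem:phipsi}. It yields Lemma~\ref{lem:dist cvx combi and C}: the height of a convex combination is at most $\mr{Cst}\max_i\eta(x_i)e^{-\dmax(y,y_i)}$. Hence whenever $u(p)\le Cu(q)e^{\dmax(p,q)}$, the hull has height $\asymp u$ (Lemma~\ref{lem cvx hull of graph}). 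Only after that do thickening by small Hilbert balls, Fact~\ref{cvx hull of smooth is smooth} and the smoothing give a round domain.

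Finally, the divergence of the volume, which you leave as an aim, is exactly where the rate matters. Slice the cusp as $\bigsqcup_k g_kA/P$. Each point of $g_kU'$ has at most $2e^{2k}+1$ translates by $P$ in $g_kU'$, so you need $\Vol_{g_{-k}\O}(U')\gtrsim e^{2k}$. The bound $u_\O\le Ce^{-f/2}$ places $g_{-k}\O$ in the Euclidean $O(e^{-2k})$-neighbourhood of $\mc C$. Since $\partial\mc C$ is ruled, with Hessian of rank $2$ at $U$, the Hilbert volume of the cone $U'$ in an $\eps$-neighbourhood grows at least like $\eps^{-1}$ (Lemma~\ref{lem:volume estimate}, via Colbois--Verovic), and the two exponents cancel. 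Without identifying the window between $e^{-f}$ (excluded by roundness) and $e^{-f/2}$ (needed for infinite volume), plus the hull estimate and this volume computation, (c) is not established.

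The non-hyperbolicity half is the easy part, and your idea is roughly right. One only needs $u_\O$ unbounded below: renormalise by $\rho(\SL_2(\R))$ so that a face of $\mc C$ lands in the boundary of a limit domain, then apply Lemma~\ref{explosion of thinness}.
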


Observe that two implications are in fact disproved by a single round convex projective manifold, and the two other are also disproved by a single (obviously different) manifold.
Moreover, the underlying groups are the same, and the convex cores are the same, but of course the Hilbert metrics and volumes induced on that convex core are different.

In fact, in general given a round convex projective manifold $\O/\G$ of dimension $d$, the limit set of $\G$ does not depend on $\O$, as it can be described as the closure of the set of attracting fixed points in $\R\PP^d$ of proximal elements of $\G$ (an element is proximal if it has an attracting fixed point).
Weak and strong geometric finiteness can be purely described in terms of the action of $\G$ on the convex hull of the limit set, so they also do not depend on $\O$.
The Hilbert metric induced by $\O$ on the convex core, on the other hand, does depend on $\O$, more precisely the smaller $\O$ is, the bigger the induced metric will be.
Similarly, the volume induced by the Hilbert metric is the Hausdorff measure of dimension $d$, and it also gets bigger when $\O$ gets smaller.

So to prove (c), the idea is to construct a $\G$-invariant domain $\O$ which is close enough to $\mc C$ so that $\mc C$ is not Gromov-hyperbolic and $\mc C/\G$ has infinite volume.
Setting $\dni\mc C=\partial\mc C\smallsetminus\Lambda$ (ni stands for "nonideal'' following terminology from \cite{DGK17}),
we will see that $\PSL_2(\R)$ acts transitively on $\dni\mc C$ with stabiliser of order 2, meaning $\dni\mc C$ identifies with a circle bundle over $\mb H^2$, and $\mc O_{\max}\smallsetminus\mc O_{\min}$ can be naturally parametrised by $\dni\mc C\times [0,1/2)$ where the second coordinate measures how far we are from $\dni\mc C$.
This allows us to describe the nonideal boundary $\partial\O\smallsetminus\Lambda$ via a height function $u:\dni\mc C/\G\to(0,1/2)$.
The smaller the height is, the closer we are to the convex core.
As $\G\subset\PSL_2(R)$ has finite covolume, $\mb H^2/\G$ decomposes into a compact part and finitely many cusps.
This induces a decomposition of $\dni\mc C/\G$ into a compact part and finitely many cuspidal parts.
Then on the compact part $u$ is bounded below by continuity, so we can only make $u$ small deep in the cusps.
We will describe, up to a multiplicative error term, how small $u$ can be, and then we will check that it can be small enough for the convex core to have infinite volume.

To finish this introduction we give a quick reminder on the notions involved in Figure~\ref{fig:new pattern} and Theorem~\ref{thm:non_implication}.
Let $\O/\G$ be a round convex projective orbifold.
\begin{itemize}
\item[(\namedlabel{item:GF_on_boundary}{\texttt{gf}})] : "weakly geometrically finite", every $x\in\Lambda_\Gamma$ is conical\footnote{the projection in $\O/\G$ of any ray in $\O$ converging to $x$ passes infinitely often in a compact set} or bounded parabolic\footnote{the stabiliser $\Stab_\G(x)$ acts properly discontinuously cocompactly on $\LG\smallsetminus\{x\}$}.

\item[(\namedlabel{item:GF_on_Omega}{\texttt{GF}})] : "strongly geometrically finite", every point of $\Lambda_\Gamma$ is conical or uniformly bounded parabolic\footnote{the stabiliser $\Stab_\G(x)$ acts properly discontinuously cocompactly on the stereographic projection from $x$ of the convex hull $\mc C(\LG)$ into $\partial\Omega\smallsetminus\{x\}$}.

\item[(\namedlabel{item:gf_cusps_lobat}{\texttt{HC}})] : "hyperbolic cusps",  \eqref{item:GF_on_boundary} holds and for each parabolic point $p$, the stabiliser $\Stab_\G(x)$ is conjugate into $\mathrm{O}_{d,1} (\R)$.

\item[(\namedlabel{item:region_parabolique}{\texttt{TF}})] :  "type fini", will not be used here.

\item[(\namedlabel{item:region_parabolique_gen}{\texttt{tf}})] : "weakly finite type", will not be used here.

\item[(\namedlabel{item:thick_part}{\texttt{PEC}})] : "partie épaisse compacte", the thick part of the convex core is compact, will not be used here.

\item[(\namedlabel{item:non_cusp_part}{\texttt{PNC}})] : "partie non cuspidale compacte", the non-cuspidal part of the convex core is compact, will not be used here.

\item[(\namedlabel{item:cusp_uniform}{\texttt{CU}})] : "cusp uniform", there exists a $\Gamma$-equivariant family of disjoint horoballs $H_p\subset\O$\footnote{the only thing we need is that it is convex and $\partial H_p\cap \partial\O=\{p\}$, so any ray entering $H_p$ either exits it for good after some time or stays forever and converges to $p$} centered at parabolic points $p$, such that the action of $\G$ on $\Cc(\LG)\smallsetminus \bigcup_p H_p$ is cocompact.

\item[(\namedlabel{item:convex_core}{\texttt{VF}})${}_R$] : "volume fini", $\G$ is finitely generated and the uniform $R$-neighborhood (for the Hilbert metric) of the convex core $\Cc(\LG)/\G$  is of finite volume.\footnote{Here our volume form is the Hausdorff measure of the Hilbert metric, see \cite[\S2.1]{CM2014finitude}.}

\item[(\namedlabel{item:convex_core0}{\texttt{VF}})${}_0$] : "volume fini",  $\G$ is finitely generated and the convex core $\Cc(\LG)/\G$ is of finite volume for the Hilbert volume form from $\O\cap {\rm Span}(\Cc(\LG))$.

\item[(\namedlabel{item:convex_core_ghyp}{\texttt{Hyp}})] : "hyperbolic convex core", the convex core $\Cc(\LG)$ is Gromov-hyperbolic\footnote{Recall that a geodesic metric space is Gromov-hyperbolic if for some $\delta$ all geodesic triangles are $\delta$-thin: any side is in the $\delta$-neighborhood of the union of the two other sides, see e.g.\ \cite[\S III.H.1]{bridsonhaeflige}.} for the Hilbert metric of $\O$.

\item[(\namedlabel{item:generic}{\texttt{Gen}})] : "genericity", the limit set $\LG$ spans $\R\PP^d$, or its dual spans the dual projective space, will not be used here.\footnote{Recall that $\G$ also preserves a round properly convex open set $\O^*$ in the projective space of linear forms on $\R^{d+1}$, and hence has a limit set there too, see \cite[\S2.3]{CM2014finitude}.}
\end{itemize}

To conclude this introduction, let us mention a few recent results concerning geometric finiteness in convex projective geometry.
In \cite{CooperHeisenberg}, Cooper found an example of strictly convex projective cusp whose holonomy is nilpotent but not virtually abelian, and in \cite[Th.\,6.6]{Flechelles}, Fléchelles explains how to deform this construct to obtain a round domain.
In fact Fléchelles announced that by generalizing Cooper's method, he was
able to build representations of any finitely generated virtually
nilpotent group that are the holonomy of round elementary cusps.
Fléchelles---Islam---Zhu also announced that given any round elementary
cusp, they can produce geometrically finite nonelementary round convex
projective orbifolds that admit a cusp whose holonomy is that of the
initial round elementary cusp.

There are also notions of nonround convex projective cusps, but the situation is more complicated.
First there is the case of strictly convex domains that are not necessarily round.
Cooper--Long--Tillman studied them in \cite{CLT2015cvxisom} and showed that in the geometrically finite case with cusps of maximal rank, the domain must be round.
On the other hand, Fléchelles and Fléchelles---Islam---Zhu found (many) strictly convex examples with nonmaximal rank cusps that cannot be made round.

Then there is the nonstrictly convex case.
In \cite{CLT18} Cooper--Long--Tillman introduced generalized cusps of maximal ranks that are not necessarily strictly convex.
They showed these are always virtually abelian and then Ballas--Cooper--Leitner classified them in \cite{BCL20}.
Flechelles found many examples of generalized nonstrictly convex cusps of nonmaximal rank, where "many'' means with a great diversity of cusp groups: e.g. he can realise any group isomorphic to the holonomy of a round cusp, and he has an example where the cusp group is solvable but not virtually nilpotent.

The second author thanks A. Zimmer for pointing out to him the second point of Theorem~\ref{thm:non_implication} using the same $\O_0$ that we will use. 
The authors thank D. Cooper for interesting discussions and useful comments, and B. Fléchelles too, in addition to the fact that he motivated us to finish this paper.

\section{$\SL_2 (\R)$-invariant domains}

As mentioned in the introduction, we let $\SL_2(\R)$ act on the space $V$ of degree 4 homogeneous polynomials with two variables, via the morphism $\rho:\SL_2(\R)\to\SL(V)=\SL_5(\R)$ defined as follows (as in \eqref{irreducible rep})
\begin{align*}
\rho\begin{sm}
a&b\\c&d
\end{sm}\cdot&(\alpha X^4+\beta X^3Y + \gamma X^2Y^2 +\delta XY^3 +\epsilon Y^4) \\
&= \alpha (aX+cY)^4+\beta (aX+cY)^3(bX+dY) + \gamma (aX+cY)^2(bX+dY)^2 \\
&\qquad+\delta (aX+cY)(bX+dY)^3 +\epsilon (bX+dY)^4
\end{align*}

\subsection{The largest invariant properly convex domain}

The $\SL_2(\R)$-action preserves the set of nonnegative polynomials (this is clear from Formula \eqref{irreducible rep}), which is a closed convex cone with nonempty interior that does not contain any line, and hence projects on a properly convex subset of $\PP(V)=\R\PP^4$ whose interior we denote by $\mc O_{\max}$, and whose boundary $\partial\mc O_{\max}$.

\subsection{The limit set}

Note that 
$$
g_t=\rho\begin{pmatrix}
e^t&0\\0&e^{-t}
\end{pmatrix}
$$
is loxodromic with eigenvalues $e^{4t},e^{2t},1,e^{-2t},e^{-4t}$.

This implies $\rho$ maps loxodromic to loxodromic, and the set $\Lambda\subset\R\PP^4$ of eigenlines associated to images of such loxodromics and their dominant eigenvalue is a compact $\rho(\SL_2(\R))$-invariant subset called the limit set.
One can check using  the explicit eigenlines of $g_t$, the $\SL_2(\R)$-action and Formula \eqref{irreducible rep} that 
$$
\Lambda=\{[(aX+bY)^4]: (a,b)\in\R^2\smallsetminus\{(0,0)\}\}.
$$
Hence $\Lambda\subset\partial\mc O_{\max}$.
There is also a dual limit set $\Lambda^*$: the set of hyperplanes spanned by the eigenlines of loxodromics whose eigenvalues are not the smallest one.
One can check that
$$
\Lambda^*=\{\ker \alpha_\theta: \alpha_\theta(P)=P(\cos\theta,\sin\theta),\ \theta\in\R\}.
$$
Then the union of the hyperplanes in $\Lambda^*$ is the set of $[P]$ such that $P(\cos\theta,\sin\theta)=0$ for some $\theta$, i.e.\ such that $P$ is not positive on the whole $\R^2\smallsetminus\{0\}$ or negative on the whole $\R^2\smallsetminus\{0\}$ (by homogeneity and intermediate value theorem).
Hence the union of the hyperplanes in $\Lambda^*$ is exactly the complement of $\mc O_{\max}$.

This implies by work of Benoist \cite{benoist2000automorphismes} that $\mc O_{\max}$ is the largest $\SL_2(\R)$-invariant properly convex open set: any other invariant domain is contained in $\mc O_{\max}$.

\subsection{Other invariant domains}

For instance, the interior $\mc O_{\min}$ of the convex hull of $\Lambda$ is another invariant convex domain, and it is the smallest one (again by work of Benoist).
Then for any $R>0$, the uniform $R$-neighborhood $\mc O_R$ of $\mc O_{\min}$ for the Hilbert metric of $\mc O_{\max}$ is another invariant convex domain.
We are going to see that these are the only ones $\rho (\SL_2 (\R))$-invariant convex domain, that the $\mc O_R$ are round, and that their boundaries foliate $\mc O_{\max}\smallsetminus\mc O_{\min}$ in a convenient way.

\subsection{A fundamental domain for the action of $\SL_2(\R)$ on $\mc O_{\max}$}

We denote by $\Cc$ the closure of $\mc O_{\min}$ in $\mc O_{\max}$.

The point $[(X^2+Y^2)^2]$ is clearly stabilized by $\mr{SO}(2)$, so there is a $\rho$-equivariant embedding $\varphi$ of $\Hb^2$ in $\mc O_{\max}$ defined by $\varphi: g\cdot o \in \Hb^2 \mapsto \rho(g) \cdot [(X^2+Y^2)^2]$, where $o\in \Hb^2$ is the fixed point of $\SO(2)$.

The next lemma exhibits a natural fundamental domain for the action of $\SL_2(\R)$ on $\mc O_{\max}$, which is an interval.

\begin{lemma}\label{lem: sl2orbits param}
	We drop the brackets around polynomials to simplify notations.
\begin{itemize}
	\item Each $\ps2$-orbit of $\mc O_{\max}$ intersects exactly once the interval $I=\left[{(X^2+Y^2)^2},{X^2Y^2}\right)$. The stabilizer of $x \in I$ is $\mr{PSO}(2)$ if $x = {(X^2+Y^2)^2}$, and $S=\{\begin{bsmallmatrix}1&0\\0&1\end{bsmallmatrix},\begin{bsmallmatrix}0&-1\\1&0\end{bsmallmatrix}\}$ otherwise.
	
	\item $\Lambda$ is the set of extremal points of $\overline{\mc C}$.
	
	\item $\PSL_2(\R)$ acts transitively with order 2 stabilisers on $\dni\mc C:=\partial\mc C\smallsetminus\Lambda$ which lies in $\mc O_{\max}$, and each $x\in\dni\mc C$ lies inside a 1-dimensional face of $\mc C$ that is an interval whose extremities are in $\Lambda$.
	Finally, let $PT\mb H^2$ be the circle bundle over $\mb H^2$ where the fiber over $p\in\mb H^2$ is the set of unoriented geodesic lines through $p$.
	Then we have a $\SL_2(\R)$-equivariant homeomorphism  $\phi:\dni\mc C\to PT\mb H^2$ such that for every $x\in\dni\mc C$ with 1-dimensional face $F_x$, and image $\phi(x)=(p,\ell)$, we have $\phi(F_x)=\{(y,\ell): y\in\ell\}$.
	
	\item The segment $I'=\left[{(X^2+Y^2)^2},{(X^2-Y^2)^2}\right)$ intersects the boundary of $\mc C$ at $X^4+Y^4$.
	
	\item The segment $I$ intersects the boundary of $\mc C$ at ${X^4+6X^2Y^2+Y^4}$.
	
	\item For each $R >0$, the action of $\ps{2}$ on $\partial \mc O_R \smallsetminus \Lambda$ is transitive, each point has a stabilizer of order $2$.  
\end{itemize}
\end{lemma}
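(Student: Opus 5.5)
Throughout, I identify $V$ with binary quartic forms and use the classical description of $\SL_2(\R)$-orbits of positive definite binary quartics. A positive quartic $P$ factors over $\mb C$ as $P=c\,q_1q_2$ with $q_1,q_2$ positive definite real quadratic forms, and this factorisation is unique up to swapping and scaling. A positive definite quadratic form up to scale is a point of $\mb H^2$, namely its complex root in the upper half plane. So $[P]\in\mc O_{\max}$ corresponds to an unordered pair $\{p_1,p_2\}\subset\mb H^2$, and $\rho$ corresponds to the diagonal action of $\PSL_2(\R)$. The orbits of unordered pairs are classified by the hyperbolic distance $d(p_1,p_2)\in[0,\infty)$.

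\textbf{First item.} Normalise $p_1=ie^{-s}$ and $p_2=ie^{s}$. Then
\[
q_1q_2=(X^2+e^{2s}Y^2)(X^2+e^{-2s}Y^2)=X^4+2\cosh(2s)X^2Y^2+Y^4,
\]
which up to scale runs through the half-open segment from $(X^2+Y^2)^2$ (at $s=0$) towards $X^2Y^2$ (as $s\to\infty$). Hence each orbit meets $I$ exactly once. For $s=0$ the stabiliser of the point is $\PSO(2)$. For $s>0$ the stabiliser of an unordered pair of distinct points is the stabiliser of the geodesic through them together with the swap; elements preserving the pair must fix or swap the two points, which gives exactly $S$, the rotation by $\pi$ about $i$.

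\textbf{Convex hull.} The points of $\Lambda$ are the degenerate pairs where $p_1=p_2$ lies on $\partial\mb H^2$. Extremality of $\Lambda$ follows because each $[(aX+bY)^4]$ is the unique point where some supporting hyperplane $\ker\alpha_\theta\in\Lambda^*$ meets $\overline{\mc O_{\max}}$: the form $P(\cos\theta,\sin\theta)$ vanishes to order $4$. Conversely, Minkowski/Carathéodory shows there are no other extremal points, since $\overline{\mc C}$ is the convex hull of the compact set $\Lambda$.

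To locate $\partial\mc C$ I use duality. A point is in $\mc C$ iff it is nonnegative on every linear form $\psi$ with $\psi((aX+bY)^4)\ge 0$ for all $(a,b)$, i.e.\ on the cone $\Lambda^{\circ}$ of ``nonnegative quartics'' in the dual, identified via apolarity again with nonnegative quartics. So $P\in\mc C$ iff $\langle P,Q\rangle\ge0$ for every nonnegative $Q$. The extremal $Q$ are the squares $(\ell_1\ell_2)^2$, so the computation reduces to testing $P$ against squared quadratics.

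\textbf{Key computation.} This is the step I expect to be the main obstacle: finding the exact boundary point on $I$ and on $I'$. On $I$, with $P_t=X^4+tX^2Y^2+Y^4$, the relevant test forms by symmetry are $Q=X^2Y^2$-type and $(X^2-Y^2)^2$. The apolar pairing gives
\[
\langle P_t,\,(X^2-\lambda Y^2)^2\rangle = 1+\lambda^2-\tfrac{t}{3}\lambda
\]
(weights $1/\binom4k$), which is nonnegative for all $\lambda$ iff $t\le 6$. Hence the boundary point is $X^4+6X^2Y^2+Y^4$. Alternatively, one can check directly that
\[
X^4+6X^2Y^2+Y^4=\tfrac12\bigl((X+Y)^4+(X-Y)^4\bigr),
\]
which is the midpoint of an edge with endpoints in $\Lambda$. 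The same identity shows $X^4+Y^4$ lies in $\dni\mc C$ along $I'$, after conjugating by a rotation by $\pi/8$. Indeed $(X^2-Y^2)^2$ corresponds to $t=-2$, and that rotation sends $I'$ into the orbit picture. Concretely,
\[
X^4+Y^4=\tfrac12\bigl((\cos\tfrac\pi8 X+\sin\tfrac\pi8 Y)^4+\dots\bigr),
\]
and I would verify this by expanding.

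\textbf{Remaining items.} Boundary points have a unique minimal face. Points $\tfrac12(u^4+v^4)$ with $u,v$ independent lie on the edge $[u^4,v^4]$. Since the orbit of $\partial\mc C\cap I$ is a single orbit, $\dni\mc C$ is one $\PSL_2(\R)$-orbit with stabiliser $S$, and all of it lies in $\mc O_{\max}$.

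For $\phi$: to $x$ in the edge $[u^4,v^4]$, associate the geodesic $\ell$ with endpoints $[u],[v]\in\partial\mb H^2$. The point $p\in\ell$ is determined by the position $s$ in $u^4+sv^4$, normalised equivariantly; this is $p=$ the image of $i$ when $u^4+v^4$ is normalised to $(X+Y)^4+(X-Y)^4$. This map is equivariant, bijective because the stabilisers match (both equal $S$), and a homeomorphism by orbit-map arguments. It sends each edge $F_x$ onto $\{(y,\ell):y\in\ell\}$.

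Finally, $\partial\mc O_R$ is an $\SL_2$-invariant hypersurface. By the first item, each orbit in $\mc O_{\max}\smallsetminus\mc C$ meets $I$ in exactly one point beyond $X^4+6X^2Y^2+Y^4$. The Hilbert distance to $\mc O_{\min}$ is invariant and, I expect, strictly monotone along $I$. Then $\partial\mc O_R\smallsetminus\Lambda$ is exactly one orbit, with stabiliser $S$ of order $2$.
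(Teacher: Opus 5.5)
Your route is different from the paper's and sound in outline.

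\textbf{Comparison with the paper.} The paper proves the first item geometrically. It locates the fixed points of the rotations $r_\theta$ through their eigenlines, and uses the Cartan decomposition together with $g_t\Pi\cap\Pi=\{X^2Y^2\}$ to exclude larger stabilisers on $I$. It then shows every orbit meets $I$ by invariance of domain, applied to $[gS,x]\mapsto\rho(g)x$ on $(\PSL_2(\R)/S)\times I/{\sim}$, plus a properness argument. The face structure of $\mc C$ is derived abstractly: existence of extremal points plus transitivity, equal dimension of faces, irreducibility excluding $2$- or $3$-dimensional faces, and double transitivity.

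Your identification of $\mc O_{\max}$ with unordered pairs of points of $\mb H^2$ gives the orbit space and all stabilisers at once, with no topology. Your apolar description of $\overline{\mc C}$ as the dual of the nonnegative cone produces explicit exposing hyperplanes. For instance, $\langle\cdot,X^2Y^2\rangle$ is exactly the hyperplane $H_x$ that the paper justifies separately in its closest-point lemma. In exchange, the paper's argument relies less on the special algebra of binary quartics.

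\textbf{Steps to repair.} Several steps are wrong or only asserted as written, though all can be fixed with your own tools.
\begin{itemize}
\item Extremality: $\ker\alpha_\theta\cap\overline{\mc O_{\max}}$ is not a point. It is $\{\ell^2q:q\geq0\}$ with $\ell=\sin\theta X-\cos\theta Y$, which is $2$-dimensional. What is true, and suffices, is $\alpha_\theta((aX+bY)^4)=(a\cos\theta+b\sin\theta)^4$. Hence $\ker\alpha_\theta$ meets $\Lambda$, and so $\overline{\mc C}$, only at $[\ell^4]$.
\item Fourth item: the rotation taking $I'$ to $I$ is $r_{\pi/4}$, not a rotation by $\pi/8$; it fixes $(X^2+Y^2)^2$ and negates $X^4-6X^2Y^2+Y^4$. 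Your displayed expansion also cannot hold. $X^4+Y^4$ lies on the edge $[X^4,Y^4]$ exposed by $\langle\cdot,X^2Y^2\rangle$, so no positive combination of fourth powers equal to it can involve $(\cos\tfrac\pi8X+\sin\tfrac\pi8Y)^4$. Moreover, the test forms $(X^2-\lambda Y^2)^2$ alone would wrongly put $P_t$ in $\overline{\mc C}$ for all $\vert t\vert\leq6$. Do your computation with a general $q=\alpha X^2+\beta XY+\gamma Y^2$:
\[
\langle P_t,q^2\rangle=\alpha^2+\gamma^2+\tfrac t3\,\alpha\gamma+\tfrac t6\,\beta^2 .
\]
This is positive semidefinite iff $0\leq t\leq6$ and definite iff $0<t<6$. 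It settles items 4 and 5 at once: $\partial\mc C$ is met at $t=0$ on $I'$ and at $t=6$ on $I$, exposed by $X^2Y^2$ and $(X^2-Y^2)^2$ respectively.
\item Third item: to apply the first item to $\dni\mc C$ you need $\dni\mc C\subset\mc O_{\max}$ beforehand, but you only assert it afterwards. It holds because a point of $\overline{\mc C}\smallsetminus\Lambda$ is a positive combination of fourth powers of at least two non-proportional linear forms, hence positive definite. Likewise, that $[u^4,v^4]$ is a face needs one line. Since $\langle(aX+bY)^4,(X^2-Y^2)^2\rangle=(a^2-b^2)^2$, the hyperplane exposing $P_6$ meets $\Lambda$ exactly in $(X\pm Y)^4$; then use equivariance.
\item Last item: the monotonicity you only expect follows from convexity of $\mc O_R$. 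If $6<t_1<t_2$ and $P_{t_2}\in\overline{\mc O_R}$, then $P_{t_1}$ lies in the open segment from $P_6\in\mc O_R$ to $P_{t_2}$, hence in $\mc O_R$. So $\partial\mc O_R$ meets $I$ at most once.
\end{itemize}
Finally, in the first item it is the pointwise stabiliser of the geodesic (trivial in $\PSL_2(\R)$) that combines with the swap, not its full stabiliser. Your ``fix or swap'' argument is the correct one.
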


\begin{proof}
It is elementary to check that the points $X^2 Y^2$, $X^4 +6X^2Y^2+Y^4$, $(X^2+Y^2)^2$, $X^4 + Y^4$ and $(X^2-Y^2)^2$ are aligned in that order.
We denote by $L$ the projective line containing those points.
	
	\begin{figure}
		\centering

\begin{tikzpicture}[scale=1.1,
  line cap=round,
  line join=round,
  every node/.style={font=\small}
]

\coordinate (T) at (0,6.00);
\coordinate (L) at (-2.62,1.55);
\coordinate (R) at ( 2.62,1.55);
\coordinate (B) at (0,-2.90);

\coordinate (G) at (0,3.00);
\coordinate (O) at (0,2.55);
\coordinate (M) at (0,1.55);

\draw[black, thick] (T) -- (-5.24,-2.90);
\draw[black, thick] (T) -- ( 5.24,-2.90);

\draw[black, dashed, thick] (T) -- (B);

\draw[black, thick] (L) arc[start angle=151,end angle=389,radius=3.00];

\draw[green!55!black, thick] (L) -- (R);

\draw[green!55!black, thick]
  (L) .. controls (-1.45,3.48) and (1.45,3.48) .. (R);

\draw[orange, thick]
  (L) .. controls (-1.45,2.88) and (1.45,2.88) .. (R);

\fill[red] (T) circle (2.3pt);
\fill[black] (L) circle (2.3pt);
\fill[black] (R) circle (2.3pt);
\fill[black] (B) circle (2.3pt);
\fill[green!55!black] (G) circle (2.3pt);
\fill[orange] (O) circle (2.3pt);
\fill[green!55!black] (M) circle (2.3pt);

\node[above=1pt] at (T) {$X^2Y^2$};
\node[left=5pt] at (L) {$X^4$};
\node[right=5pt] at (R) {$Y^4$};

\node[above=3pt] at (G) {$X^4+6X^2Y^2+Y^4$};
\node[below=4pt] at (O) {$(X^2+Y^2)^2$};
\node[below=4pt] at (M) {$X^4+Y^4$};

\node[green!55!black, right] at (1.18,1.77) {$\Omega_{\min}$};
\node at (1.20,-1.55) {$\Omega_{\max}$};

\node[below=4pt] at (B) {$(X^2-Y^2)^2$};

\end{tikzpicture}

		\caption{Section of $\mc O_{\max}$ and $\mc O_{\min}$}
		\label{fig:fundamental_segment}
	\end{figure}
	
	We first analyze the stabilizer of a standard rotation. Let $r_\theta=\rho\begin{sm}
	\cos\theta&-\sin\theta\\\sin\theta&\cos\theta
	\end{sm}$.
	One can check that the eigenvalues are $e^{4i\theta},e^{2i\theta},1,e^{-2i\theta},e^{-4i\theta}$,
	and that the invariant projective line $L_1\subset\PP(V)$ associated with the eigenvalues $e^{4i\theta},e^{-4i\theta}$ is spanned by $X^4-6X^2Y^2+Y^4$ and $XY^3-X^3Y$, and the invariant projective line $L_2$ associated with the eigenvalues $e^{2i\theta},e^{-2i\theta}$ is spanned by $X^4-Y^4$ and $XY^3+X^3Y$.
	These two lines do not intersect $\overline{\mc O}_{\max}$, otherwise by $\SO(2)$-invariance they would be contained in it and this would contradict proper convexity.
	
	This implies that the only fixed point of $r_\theta$ is $(X^2+Y^2)^2$ if $\theta\not\in \tfrac\pi2\mb Z$.
	If $\theta\in \pi\mb Z$ then $r_\theta=\mr{id}$.
	If $\theta=\tfrac\pi2$ then $r_\theta$ fixes every point of the projective plane $\Pi \subset\PP(V)$  spanned by $L_1$ and $(X^2+Y^2)^2$, which is just
	$$
	\Pi=\Span(X^2Y^2,X^4+Y^4,XY^3-YX^3).
	$$
	In Figure~\ref{fig:fundamental_segment}, $\Pi $ intersects the chosen section in the line $L$, represented dotted.
	
	Let $x\in I$, and $g\in\SL_2(\R)\smallsetminus\{\pm I_2\}$ that fixes it.
	Then $g$ cannot be hyperbolic nor parabolic since $\SL_2(\R)$ acts properly on $\mc O_{\max}$.
	So $g$ is elliptic: there is $h\in \SL_2(\R)$ and $\theta\in\R\smallsetminus \pi\mb Z$ such that $\rho(g)=\rho(h)r_\theta\rho(h)^{-1}$.
	We consider the Cartan decomposition of $h$: there are $\phi,\psi\in\R$ and $t\geq 0$ such that $\rho(h)=r_\phi g_t r_\psi$.
	
	To show that $\Stab_{\PSL_2(\R)}(x)$ is $\PSO(2)$ or $S$ depending on whether $x$ is equal to $(X^2+Y^2)^2$ or not, it suffices to show $t=0$.
	Suppose by contradiction $t>0$.
	Now by the previous analysis, all fixed points of $\rho(g)$ are in $r_\phi g_t \cdot \Pi $, including $x$.
	But $x$ is also in $I$ and hence in $\Pi $.
	Yet, one easily checks that the plane 
	$$g_t \Pi =\Span(X^2Y^2,\ e^{4t}X^4+e^{-4t}Y^4,\ e^{-2t}XY^3-e^{2t}YX^3)$$
	intersects $\Pi $ in exactly $X^2Y^2$.
	Thus  $r_\phi g_t \cdot \Pi $ intersects $\Pi $ in exactly $r_\phi(X^2Y^2)\in\partial\mc O_{\max}$, and $x\not\in\partial\mc O_{\max}$: contradiction!

	We described the stabilisers of points in $I$, let us now prove that every $\SL_2(\R)$-orbit intersects $I$.
	Consider the quotient $M:=(\ps2/S)\times I/\sim$ where $(gS,x)\sim (hS,y)$ if and only if $(gS,x) = (hS,y)$ or [$x=y=(X^2+Y^2)^2$ and $h^{-1}g\in\mr{PSO}(2)$].
	It is a topological manifold of dimension $4$, indeed one can check that by the polar decomposition 
	$$((\ps2/S)\times I/\sim) \simeq 
	((\mathrm{PSO}(2)/S)\times I/\sim) \times \mathrm{Sym}_+^1 (2,\R) 
	\simeq \mathbb{D}^2 \times \mathbb{D}^2,
	$$
	where $\mathrm{Sym}_+^1 (2,\R)$ is the space of positive definite real matrices of size $2$ and determinant $1$. 
	Moreover, we have a natural continuous injection $M\hookrightarrow \mc O_{\max}$ that maps $[gS,x]$ to $gx$, which is therefore an open embedding by the Invariance of Domain Theorem.
	
	We want to prove this map is surjective.
	To do so, it suffices to check that this map is proper.
	Consider  $(g_n)_n\subset\PSL_2(\R)$ and $(x_n)_n\subset I$ such that $(g_nx_n)_n$ is relatively compact in $\mc O_{\max}$.
	Then the  Hilbert distance from $x_n$ to $\mc C$, which is the same as that from $g_nx_n$ to $\mc C$, is bounded.
	Therefore $(x_n)_n$ is relatively compact, and so is $(g_n)_n$ by properness of the action of $\ps2$.
	Thus $(g_nS,x_n)_n$ projects onto a relatively compact sequence in $M$.
	
	Let us now prove that the point of $\Lambda$ are extremal.
	$\overline{\mc C}$ is the convex hull of $\Lambda$, which is compact.
	There must be at least one extremal point, and it must be in $\Lambda$.
	Since $\SL_2(\R)$ acts transitively on $\Lambda$, this implies all points of $\Lambda$ are extremal.
	
	We now prove the third point.
	One can check the interval $(X^4,Y^4)$ lies in $\mc O_{\max}$.
	Since $\SL_2(\R)$ acts doubly transitively on $\Lambda$, this implies all convex combinations of two points of $\Lambda$ are in $\mc O_{\max}$, which implies by convexity all of $\mc C$ (the convex hull of $\Lambda$ minus $\Lambda$ itself) lies in $\mc O_{\max}$.
	Thus $\PSL_2(\R)$ acts properly on $\dni\mc C$, and the orbit map $\PSL_2(\R)/S\to\dni\mc C$ is injective and proper, so surjective by connectedness of $\dni\mc C$ (which is topologically a 3-dimensional sphere minus a circle).
	
	In particular, all faces in $\dni\mc C$ have the same dimension which is either 1, 2 or 3.
	The relative boundary of each is only made of points of $\Lambda$ (if there were other points these would have a face of strictly smaller dimension), concluding the proof of the second point.
	If this dimension was 2 or 3, then the relative boundary is topologically a circle or sphere contained in $\Lambda$, so it would be the entire $\Lambda$, so $\Lambda$ is contained in a projective hyperplane which contradicts the irreducibility of $\PSL_2(\R)$'s action.
	So the faces are 1-dimensional.
	Each face is an interval between two points of $\Lambda$. 
	Since $\SL_2(\R)$ acts doubly transitively on $\Lambda$, all intervals between two points of $\Lambda$ are faces.
	
	The equivariant homeomorphism $\phi:\dni\mc C\to PT\mb H^2$ is defined as follows.
	Let $o\in\mb H^2$ fixed by $\PSO(2)$ and $\ell_0$ the geodesic line through $o$ that is the orbit via $\begin{sm}
	e^t&0\\0&e^{-t}
	\end{sm}$, which is preserved by $s=\begin{bsmallmatrix}0&-1\\1&0\end{bsmallmatrix}$ since $\begin{sm}
	e^t&0\\0&e^{-t}
	\end{sm}\begin{sm}
	0&-1\\1&0
	\end{sm}=\begin{sm}
	0&-1\\1&0
	\end{sm}\begin{sm}
	e^{-t}&0\\0&e^{t}
	\end{sm}$.
	Then for every $g\in\PSL_2(\R)$ set $\phi(\rho(g)(X^4+Y^4))=g\cdot (o,\ell_0)$.
	It is well-defined since $s$ fixes $(o,\ell_0)$.
	Moreover, for any $x\in\dni\mc C$ with face $F_x\subset\dni\mc C$ and image $\phi(x)=(p,\ell)$, we have $\phi(F_x)=\{(q,\ell):q\in\ell\}$.
	Indeed by equivariance it suffices to check it for $x=X^4+Y^4$, for which $F_x=(X^4,Y^4)$ and $\phi(x)=(o,\ell_0)$.
	In this case $F_x$ is the orbit of $x$ under $\rho\begin{sm}
	e^t&0\\0&e^{-t}
	\end{sm}$ and $\{(q,\ell_0):q\in\ell_0\}$ is the orbit of $(o,\ell_0)$ under $\begin{sm}
	e^t&0\\0&e^{-t}
	\end{sm}$, so they identify through $\phi$.
	
	The fourth point comes from the fact that we already know $X^4+Y^4$ is in the boundary of $\mc C$.
	The fifth point comes from the fact that $I=r_{\tfrac\pi4}(I')$ and $r_{\tfrac\pi4}(X^4+Y^4)=X^4+6X^2Y^2+Y^4$.
	The last point comes from that the orbit map $\PSL_2(\R)/S\to\partial\mc O_R\smallsetminus\Lambda$ is injective and proper hence surjective.
\end{proof}

The following lemma is not used in the rest of the paper.
It just serves as an explanation and justification for Figure~\ref{fig:fundamental_segment}.

Let $g_t=\begin{psmallmatrix}e^{t}&0\\0&e^{-t}\end{psmallmatrix}$. If $x\in \mc O_{\max}$, we denote by $g_\R \cdot x$ the orbit of $x$ under the group $(g_t)_{t \in \R}$.

\begin{lemma}\label{lem: sl2orbits param2}
Again we drop the brackets around polynomials to simplify notations.
Let $\Pi$ be the projective plane spanned by $X^4$, $Y^4$ and $X^2Y^2$.
\begin{itemize}
	\item $\Pi \cap \O_{\mathrm{min}}$ is the convex whose boundary is $[X^4,Y^4] \cup g_\R \cdot (X^4+6X^2Y^2 + Y^4)$ (in green on Figure~\ref{fig:fundamental_segment}).
	
	\item $\Pi \cap \O_{\mathrm{max}}$ is the convex whose boundary is $[X^4,X^2Y^2] \cup [X^2Y^2,Y^4] \cup g_\R \cdot {(X^2-Y^2)^2}$ (in black on Figure~\ref{fig:fundamental_segment}).
	
	\item $\Pi \cap \PSL_2(\R)\cdot (X^2+Y^2)^2 = g_\R \cdot (X^2+Y^2)^2$ (in orange on Figure~\ref{fig:fundamental_segment}).
\end{itemize}
\end{lemma}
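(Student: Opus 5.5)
The plan is to reduce everything to explicit computations in coordinates on $\Pi$, using the one-parameter group $g_t$ and an involution whose fixed set is $\Pi$. A point of $\Pi$ is $[aX^4+cX^2Y^2+bY^4]$. The group $g_\R$ preserves $\Pi$ and acts diagonally on it:
$$(a,c,b)\mapsto (e^{4t}a,\,c,\,e^{-4t}b).$$
So the invariants of the $g_\R$-action on $\Pi$ are $c$ together with the product $ab$, up to overall scaling, and each claimed boundary curve will be a level set of $c^2/(ab)$.

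\textbf{Second point ($\mc O_{\max}$).} Substituting $s=X^2\geq 0$ and $t=Y^2\geq 0$, the polynomial $aX^4+cX^2Y^2+bY^4$ is nonnegative if and only if the binary quadratic $as^2+cst+bt^2$ is nonnegative on the closed positive quadrant. This holds if and only if
$$a\geq 0,\qquad b\geq 0,\qquad c\geq -2\sqrt{ab}.$$
The boundary of this cone in $\Pi$ has three pieces:
\begin{enumerate}
\item $a=0$ with $c\geq 0$, which is the segment $[X^2Y^2,Y^4]$;
\item $b=0$ with $c\geq 0$, which is the segment $[X^4,X^2Y^2]$;
\item $c=-2\sqrt{ab}$ with $a,b>0$.
\end{enumerate}
After normalising $ab=1$, the third piece is $e^{4t}X^4-2X^2Y^2+e^{-4t}Y^4$. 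This is exactly $g_\R\cdot (X^2-Y^2)^2$.

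\textbf{Third point (the orbit of $(X^2+Y^2)^2$).} Any point of $\PSL_2(\R)\cdot(X^2+Y^2)^2$ equals $Q^2$, where $Q=pX^2+2qXY+rY^2$ is positive definite with $pr-q^2=1$. This is because $\rho(g)(X^2+Y^2)^2=\big((aX+cY)^2+(bX+dY)^2\big)^2$. The coefficients of $X^3Y$ and $XY^3$ in $Q^2$ are $4pq$ and $4qr$. Since $p,r>0$, the point lies in $\Pi$ if and only if $q=0$. Then $pr=1$, so $Q$ lies in the $g_\R$-orbit of $X^2+Y^2$, and $Q^2$ lies in $g_\R\cdot(X^2+Y^2)^2$.

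\textbf{First point ($\mc O_{\min}$).} This is the step needing an idea. I would use the involution $\sigma=\rho\begin{sm}1&0\\0&-1\end{sm}$, that is, $Y\mapsto -Y$. This element of $\GL_2$ preserves $\Lambda$, hence preserves the cone $K$ over its convex hull. Its $+1$-eigenspace is the span of $X^4,X^2Y^2,Y^4$, so its fixed set is $\Pi$; the complementary eigenspace is the span of $X^3Y,XY^3$. Because $\sigma$ is linear and preserves $K$, the averaging map $v\mapsto \tfrac12(v+\sigma v)$ sends $K$ onto $K\cap\widetilde\Pi$. Hence $K\cap\widetilde\Pi$ is the convex cone generated by the averages of the generators:
$$\tfrac12\big((aX+bY)^4+(aX-bY)^4\big)=a^4X^4+6a^2b^2X^2Y^2+b^4Y^4.$$
These are the points with $c=6\sqrt{ab}$ and $a,b\geq 0$. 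After normalising $ab=1$, they form the curve $g_\R\cdot(X^4+6X^2Y^2+Y^4)$ together with its two endpoints $X^4$ and $Y^4$. This curve is strictly convex in the affine chart $c=1$, where it becomes the hyperbola $ab=1/36$. Therefore its convex hull is bounded exactly by the curve and the chord $[X^4,Y^4]$. Taking relative interiors, which is legitimate because $\Pi$ meets $\mc O_{\min}$, for instance at $(X^2+Y^2)^2$, gives the first point.

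The main obstacle I expect is this projection argument for $\mc O_{\min}$: one must check carefully that slicing the hull by $\Pi$ coincides with averaging by $\sigma$. The other two points are direct computations.
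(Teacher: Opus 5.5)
Your proof is correct, and it takes a different route from the paper's. The paper argues structurally from Lemma~\ref{lem: sl2orbits param}. For the first point, it observes that $[X^4,Y^4]$ and $g_\R\cdot(X^4+6X^2Y^2+Y^4)$ lie in $\partial(\Pi\cap\mc O_{\min})$: the former is a face of $\mc C$, and the latter because $X^4+6X^2Y^2+Y^4\in\partial\mc C$ and $\partial\mc C$ is $g_\R$-invariant. Together these curves bound a convex region, so they must be the whole boundary, and likewise for $\mc O_{\max}$. For the third point, it uses that $g_\R$ sweeps the segment from $X^4+Y^4$ to $X^4+6X^2Y^2+Y^4$ over $\Pi\cap\mc C$. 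Every point of this segment other than $(X^2+Y^2)^2$ has stabiliser of order $2$, while points of $\PSL_2(\R)\cdot(X^2+Y^2)^2$ have stabiliser conjugate to $\PSO(2)$.

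You instead compute all three sets in coordinates. The step you flagged as the main obstacle is already fully justified by what you wrote. The map $P=\tfrac12(1+\sigma)$ sends $K$ into $K\cap\widetilde\Pi$ (by convexity and $\sigma$-invariance) and fixes $K\cap\widetilde\Pi$ pointwise, so $P(K)=K\cap\widetilde\Pi$. By linearity, $P(K)$ is the cone on the averaged generators.

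Your route buys self-containment. It uses neither the stabiliser computations nor the face structure of $\mc C$, and it re-proves independently that $X^4+6X^2Y^2+Y^4\in\partial\mc C$. It also gives explicit equations $c=-2\sqrt{ab}$, $c=6\sqrt{ab}$, $c=2\sqrt{ab}$ for the black, green and orange curves. The paper's route is shorter once the orbit structure is known, though its claim that the curves ``do bound a convex set'' tacitly needs the kind of explicit check you perform.

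Two small points to tidy:
\begin{enumerate}
\item In the chart $c=1$ the chord $[X^4,Y^4]$ sits at infinity. It is cleaner to work in the chart $a+b=1$, where the averaged generators trace the upper half of the ellipse $c^2=36a(1-a)$ and their hull is the half-ellipse region over the chord $c=0$.
\item The claim $(X^2+Y^2)^2\in\mc O_{\min}$ deserves a line. We have $\int_0^{2\pi}(\cos\theta X+\sin\theta Y)^4\,d\theta=\tfrac{3\pi}{4}(X^2+Y^2)^2$. A linear form that is nonnegative on $K$ and vanishes at this barycenter would vanish on all of $\Lambda$, which spans. Hence no supporting hyperplane of the hull passes through it.
\end{enumerate}
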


\begin{proof}
	The first point comes from the fact that these two curves do bound a convex set and are indeed in $\partial (\Pi \cap \O_{\mathrm{min}})$, so they must be its whole boundary.
	
	The second point is proved similarly.
	
	For the third point, note that the $g_{\R}$-orbit of the segment from $X^4+Y^4$ to $X^4+6X^2Y^2+Y^4$ covers the whole $\Pi \cap \mc C$, and the only points whose stabiliser is not order 2 are in the $g_\R$-orbit of $(X^2+Y^2)^2$, so that is $\Pi \cap \PSL_2(\R)\cdot (X^2+Y^2)^2$.
\end{proof}

Finally we recall the argument why the invariant convex domains $\mc O_R$ are round.

\begin{proposition}
	For each $R > 0$, the convex $\mc O_R$ is strictly convex with $\Cc^1$-boundary.
\end{proposition}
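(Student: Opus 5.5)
The plan is to prove strict convexity and $\mc C^1$-regularity separately. Both will rest on the fact that the $R$-neighborhood is sandwiched between $\mc O_{\min}$ and $\mc O_{\max}$, and on the homogeneity of $\partial\mc O_R\smallsetminus\Lambda$ given by the last point of Lemma~\ref{lem: sl2orbits param}.

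\textbf{Convexity of $\mc O_R$.} First, $\mc O_R$ is convex. Recall that in a properly convex domain the distance function $x\mapsto d_{\mc O_{\max}}(x,\mc C)$ to a convex subset has convex sublevel sets. Indeed, for two points at distance $\leq R$ from $\mc C$ with nearest points $c_1,c_2\in\mc C$, the segment between them stays within $R$ of $[c_1,c_2]\subset\mc C$. This is the standard "distance between two segments is bounded by the max at the endpoints" property of Hilbert geometry, as in \cite[\S2]{CM2014finitude}.

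\textbf{Strict convexity.} Suppose $\partial\mc O_R$ contains a nontrivial segment $\sigma$. The boundary points of $\mc O_R$ lie in $\mc O_{\max}$ except for those of $\Lambda$, which are extremal in $\overline{\mc O_{\max}}$ by the second point of Lemma~\ref{lem: sl2orbits param}. So the relative interior of $\sigma$ lies in $\partial\mc O_R\smallsetminus\Lambda\subset \mc O_{\max}$. On that interior the function $f=d_{\mc O_{\max}}(\cdot,\mc C)$ is constant equal to $R$. I will then argue that $f$ cannot be constant along a segment inside $\mc O_{\max}$ at positive distance from $\mc C$. By homogeneity (the orbit of the segment can be moved so that a point sits on $I$) it suffices to check a single model configuration. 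One approach is to use that $\partial\mc O_{\max}$ contains no segment whose supporting hyperplane also supports $\mc C$: constancy of distance along a segment in Hilbert geometry forces the segment and its projection to $\mc C$ to span a plane meeting $\partial\mc O_{\max}$ in two segments through the endpoints. These would give a flat strip, i.e.\ a 2-dimensional region of $\partial\mc O_{\max}$ and $\overline{\mc C}$ configured like a trapezoid. I would rule this out by a direct computation with the $\Lambda^*$ description of supporting hyperplanes of $\mc O_{\max}$: a face of $\partial\mc O_{\max}$ is the set of nonnegative quartics with prescribed real roots, and the relevant face of $\overline{\mc C}$ is a segment $[\ell^4,m^4]$. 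This step is the main obstacle, since $\mc O_{\max}$ itself is not strictly convex.

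\textbf{$\mc C^1$ boundary.} By duality I would prefer a cleaner route. The dual $\mc O_R^*$ is again $\rho^*(\SL_2(\R))$-invariant, $\rho^*\simeq\rho$, and it lies between $\mc O_{\max}^*\simeq\mc O_{\max}$ and $\mc O_{\min}^*$. Then $\mc C^1$-ness of $\mc O_R$ is equivalent to strict convexity of $\mc O_R^*$. To exploit this, I would show that $\mc O_R^*$ is itself one of the domains $\mc O_{R'}$, using the classification that invariant domains are exactly the $\mc O_{R}$. That classification follows from Lemma~\ref{lem: sl2orbits param}: each invariant domain is determined by the point where its boundary crosses the segment $I$. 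Given this, regularity follows from strict convexity applied to the dual.

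Alternatively, at points of $\Lambda$ I can give a direct argument. The supporting hyperplane to $\mc O_{\max}$ at $[\ell^4]$ is unique, namely the one in $\Lambda^*$, and it supports $\mc O_R\subset\mc O_{\max}$. At nonideal points, a corner would be an orbit-invariant singularity along a 3-dimensional homogeneous hypersurface. Such a corner would make the whole hypersurface nonsmooth, contradicting that a convex hypersurface is differentiable almost everywhere.

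Indeed, homogeneity under $\PSL_2(\R)$ gives $\mc C^1$ at all nonideal points directly. Combined with the uniqueness of supporting hyperplanes at $\Lambda$, this finishes regularity without duality, so only strict convexity remains hard.
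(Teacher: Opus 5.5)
\textbf{Strict convexity is not proved.} Strict convexity is the heart of the statement; the paper derives the $\Cc^1$ property from it by duality. Your treatment of it stops at a plan. You reduce to showing that $d_{\mc O_{\max}}(\cdot,\mc C)$ cannot be constant along a segment. You then assert without proof that such constancy produces a ``flat strip'' in $\partial\mc O_{\max}$, and you leave excluding it as ``the main obstacle''. Neither step is justified. Nearest-point projections in Hilbert geometry need not be unique and need not send segments to segments. Moreover, $\partial\mc O_{\max}$ really does have large flat faces, e.g.\ the $2$-dimensional faces $\{[\ell^2q]\}$ with $q$ a nonnegative binary quadratic, so no contradiction comes for free.

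The missing idea is short and uses only the homogeneity you already exploit for differentiability. Suppose $\partial\mc O_R$ contained a nontrivial segment. Then some point of $\partial\mc O_R\smallsetminus\Lambda$ would be non-extremal in $\overline{\mc O}_R$. By transitivity of $\PSL_2(\R)$ on $\partial\mc O_R\smallsetminus\Lambda$, \emph{no} point there would be extremal. By Krein--Milman (Minkowski), $\overline{\mc O}_R$ would then be the convex hull of points of $\Lambda$, hence contained in $\overline{\mc C}$. This is absurd, since $\mc O_R\supsetneq\mc O_{\min}$.

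\textbf{The argument at ideal points is wrong.} Your argument at nonideal points (homogeneity plus almost-everywhere differentiability of convex hypersurfaces) is fine. The ``direct'' argument at $\Lambda$ is not. The unique supporting hyperplane of $\mc O_{\max}$ at $p\in\Lambda$ does support $\mc O_R$. However, a smaller convex set can have \emph{more} supporting hyperplanes at a common boundary point, so uniqueness does not transfer from $\mc O_{\max}$ to $\mc O_R$.

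Your reasoning would equally show that $\mc O_{\min}$ is differentiable at $\Lambda$, which is false. Let $c$ be the coefficient of $X^2Y^2$. Then $(aX+bY)^4$ has $c=6a^2b^2\geq 0$, so $\{c=0\}$ supports $\mc O_{\min}$ at $[X^4]$, yet it differs from the tangent hyperplane $\{e=0\}$ of $\mc O_{\max}$ there. Compare also \S\ref{sec:stereo} and Lemma~\ref{differentiability}.

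So regularity at $\Lambda$ really needs one of two routes. The first is your duality route, $\mc O_R^*\simeq\mc O_{\iota(R)}$, which requires strict convexity of \emph{every} $\mc O_{R'}$; that is exactly the missing step above. The second is Lemma~\ref{differentiability} applied to the constant height function of $\mc O_R$. In the duality route, note that duality swaps the extremes: $\mc O_{\max}^*\simeq\mc O_{\min}$, not $\mc O_{\max}$. This swap is what guarantees that $\mc O_R^*$ is some $\mc O_{R'}$ with $0<R'<\infty$.
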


\begin{proof}
The $\PSL_2(\R)$-orbit of any point not in $\mc C$ is $\partial\mc O_R\smallsetminus\Lambda$ for some $R$ by the last point of Lemma~\ref{lem: sl2orbits param}.
Thus the $(\mc O_R)_{R \in (0,+\infty)}$ are the only $\PSL_2(\R)$-invariant properly convex domain of $\R\PP^4$ outside of $\mc O_{\min}$ and $\mc O_{\max}$.
Since $\rho$ is conjugated to  its dual $\rho^\ast$, and duality swaps smallest and largest invariant convex domains, we get that there is an involution $\iota: (0,+\infty) \to (0,+\infty)$ such that $\mc O_R^{\ast} \simeq \mc O_{\iota(R)}$.

It is well-known that a properly convex set is strictly convex if and only if its dual has $\Cc^1$ boundary.
So to prove that $\mc O_R$ is strictly convex with $\Cc^1$boundary, it is enough to show that $\mc O_R$ is strictly convex.

If $\mc O_R$ is not strictly convex then there exists $x \in \partial \mc O_R \smallsetminus \Lambda$ such that $x$ is not an extremal point of $\mc O_R$, then all point of $\partial \mc O_R \smallsetminus \Lambda$ are non-extremal by transivity of the $\SL_2(\R)$-action so $\overline{\mc O}_R$ is in the convex hull of $\Lambda$, i.e.\ in $\mc C$, which is absurd.
\end{proof}

\subsection{The stereographic projection of the limit set}\label{sec:stereo}

The limit set $\Lambda$ is the image of the Veronese embedding of $\R\PP^1$ in $\R\PP^4$, namely the image of the map $[a:b] \mapsto (aX+bY)^4$, in other word $\Lambda = h_\R \cdot X^4$.

Taking $(X^4,X^3Y,X^2Y^2,XY^3,Y^4)$ to be our basis of $V_5$, one can observe that the hyperplane $\PP(x_5 = 0)$ is a supporting hyperplane to $\mc O_{\max}$ at $X^4$, thus one gets a map:
\[
\begin{array}{rccc}
\psi : \Lambda \setminus \{X^4\} 
& \longrightarrow 
& \mathbb{R}^3
& =
\mathbb{P}\left( V_5 / \langle X^4 \rangle \right)
\setminus
\mathbb{P}\left( \{x_5 = 0\} / \langle X^4 \rangle \right)
\\[0.4em]
[aX^4+bX^3Y+cX^2Y^2+dXY^3+eY^4]
& \longmapsto
& (\tfrac be,\tfrac ce,\tfrac de)
& = [bX^3Y+cX^2Y^2+dXY^3+eY^4]
\end{array}
\]
Using the parametrisation of $\Lambda \smallsetminus \{ X^4 \}$ given by $\R \ni t \mapsto (tX+Y)^4$, one gets that:
$$
\psi:\R \ni t \mapsto (4t^3,6t^2,4t) \in \R^3 
$$

\noindent The convex hull of $\psi(\R)$ is a parabolic cylinder body, namely the set:
$$
\mathcal{CP} =  \{ (x,y,z) \in \R^3 \, |\, y \geqslant \frac{3}{8} \, z^2 \}
$$

\begin{figure}
		\centering
		\includegraphics[width=0.5\linewidth]{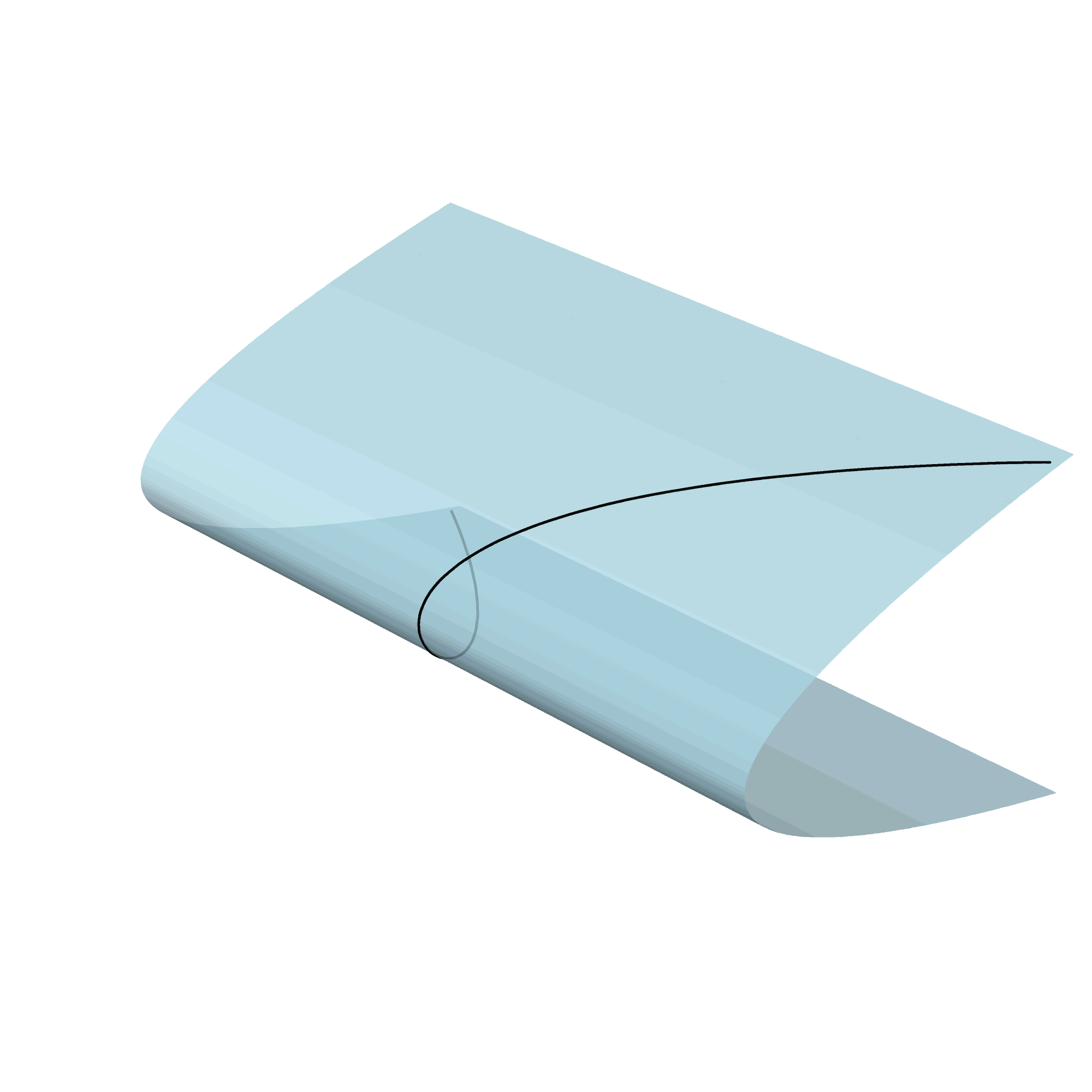}
	\caption{Veronese on the parabolic cylinder}
	\label{fig:parabolic_cylinder}
\end{figure}


\begin{remark}
In \cite{CM2014finitude}, during the proof of Proposition 10.6 and Proposition 10.7, Crampon and the second author claim wrongly that the convex hull of $\psi(\R)$ is $\R^3$ (whereas it is in fact $\mathcal{CP}$ as explained above).
Note that, this error does not break the end of the argument of the proof of Proposition 10.6 and Proposition 10.7 since the action of $h_\Z$ on $\mathcal{CP}$ is also not cocompact.
\end{remark}

\section{The convex core has finite volume and Gromov-hyperbolic universal cover in $\rho(\PSL_2(\R))$-invariant convex domains}

In this very short section we prove point (b) of Theorem~\ref{thm:non_implication}, and hence also its points 1.\ and 2.
The only thing we will need to know about the Hilbert volume on a properly convex open set $\O$ is that it is locally finite and invariant under projective automorphisms.
Thus we postpone until Section~\ref{sec infinite volume} the definition and basic properties of the Hilbert volume.

\begin{proof}[Proof of points (b), 1.\ and 2.\ of Theorem~\ref{thm:non_implication}]
    Let $\mc O$ be a $\SL_2(\R)$-invariant convex domain that contains $\mc C$, so $\mc O$ is $\mc O_{\max}$ or $\mc O_R$ for some $R>0$.
    Let $\mc C'$ be the closed 1-neighborhood of $\mc C$ in $\mc O$; it is $\SL_2(\R)$-invariant.
    
    Using Lemma~\ref{lem: sl2orbits param}, we have a $\SL_2(\R)$-equivariant surjective map $\mc O_{\max}\to\mb H^2$ that maps $I$ onto the fixed point of $\SO(2)$.
    The restriction $\pi:\mc C'\to\mb H^2$ is proper.
    The Hilbert volume $\mu$ from $\mc O$ restricted to $\mc C'$ is a locally finite $\SL_2(\R)$-invariant measure, so its push-forward by $\pi$ is a locally finite $\SL_2(\R)$-invariant measure on $\mb H^2$, which is hence a multiple of the Haar measure.
    
    Let $\G\subset\SL_2(\R)$ be a lattice.
    The Hilbert measure $\mu$ descends to a measure $\mu'$ on $\mc C'/\G$ (which is not the push-forward under the quotient map).
    The push-forward of $\mu'$ under the projection $\mc C'/\G\to\mb H^2/\G$ is a multiple of the Haar measure, which has finite mass since $\G$ is a lattice, so $\mu'$ must also have finite mass.
    
    The Hilbert metric on $\mc C$ is $\SL_2(\R)$-invariant, and proper (closed balls are compact), and $\SL_2(\R)$ acts cocompactly by Lemma~\ref{lem: sl2orbits param}.
    So any cocompact lattice of $\SL_2(\R)$ also acts cocompactly, and we also know such lattice is word-hyperbolic, so $\mc C$ is Gromov-hyperbolic by the Milnor–Švarc Lemma.
\end{proof}

\section{How small can a $\G$-invariant convex domain be?}

Now our goal is to build $\G$-invariant round domains $\O$ where the convex core has infinite volume and is not Gromov-hyperbolic for the Hilbert metric.
For this the idea is to make $\O$ as small as possible: the smaller it is, the bigger the volume of the convex core is (the convex core does not depend on $\O$).
So a first natural question is: how small $\O$ can be?

\subsection{Parametrisation}

To give a quantitative meaning to this, we use a $\SL_2(\R)$-equivariant parametrisation 
$$
\dni\mc C\times[0,1/2)\to\mc O_{\max}\smallsetminus\mc O_{\min};\ ([X^4+Y^4],\alpha) \mapsto [X^4+Y^4-4\alpha X^2Y^2]
$$ 
The first coordinate of this parametrisation records a closest point projection to $\dni\mc C$ and the second coordinate, that we shall call \emph{height} of point and denote $\eta(x)$ for $x\in\mc O_{\max}\smallsetminus\mc O_{\min}$, almost records the distance to $\dni\mc C$, for the Hilbert metric of $\mc O_{\max}$.

\begin{lemma}
    $[X^4+Y^4]$ is a closest point projection of $[X^4+Y^4-4\alpha X^2Y^2]$ for any $0\leq \alpha<1/2$ for the Hilbert metric of $\mc O_{\max}$, and the Hilbert distance is given by $\tfrac12\log\tfrac1{1-2\alpha}$.
    We have the following estimates:
    $$
    \alpha\leq\frac12\log\frac1{1-2\alpha}\leq \frac{\alpha}{1-2\alpha}\sim_{\alpha\to 0}\alpha
    $$
    In other words, for any $x\in\mc O_{\max}\smallsetminus\mc O_{\min}$ we have
    $$
    \eta(x)\leq \dmax (x,\mc C)\leq \frac{\eta(x)}{1-2\eta(x)}\sim_{\eta(x)\to 0}\eta(x)
    $$
\end{lemma}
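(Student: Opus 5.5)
The plan is to compute everything inside a single well-chosen projective line, namely the line $L$ through $X^2Y^2$, $X^4+6X^2Y^2+Y^4$, $X^4+Y^4$ and $(X^2-Y^2)^2$ from the proof of Lemma~\ref{lem: sl2orbits param}. The point $x_\alpha=[X^4+Y^4-4\alpha X^2Y^2]$ lies on this line. In the affine parameter $s\mapsto X^4+Y^4+sX^2Y^2$, the line meets $\partial\mc O_{\max}$ exactly when the polynomial $u^2+su+1$ (with $u=X^2/Y^2\geq 0$, after dehomogenising by $Y^4$) acquires a nonnegative double root. This happens at $s=-2$, giving $(X^2-Y^2)^2$, and at $s=+\infty$, giving $X^2Y^2$. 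The line meets $\mc C$ along the parameters $s\in[0,6]$, with $s=0$ being $X^4+Y^4\in\dni\mc C$.

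The Hilbert distance between $s=0$ and $s=-4\alpha$, with endpoints $-2$ and $\infty$, is then a one-variable cross-ratio computation:
$$\tfrac12\log\frac{0-(-2)}{-4\alpha-(-2)}=\tfrac12\log\frac1{1-2\alpha}.$$
This gives the upper bound $\dmax(x_\alpha,\mc C)\leq\tfrac12\log\tfrac1{1-2\alpha}$.

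For the reverse inequality I would use a supporting hyperplane argument. Take the linear form $\beta(P)=$ the coefficient of $X^2Y^2$ minus a suitable multiple of the others. A cleaner choice is $\beta(P)=P(1,1)+P(1,-1)$ minus something, but more concretely, the face $(X^4,Y^4)$ of $\mc C$ at $X^4+Y^4$ lies in the hyperplane $H=\{\text{coefficient of }X^2Y^2=0\}$ restricted to $\mc C$ after using the dual description. The key observation is that $\mc C$ lies in the half-space $\{c\geq 0\}$, where $c$ is the $X^2Y^2$-coefficient. Indeed, $(aX+bY)^4$ has $X^2Y^2$-coefficient $6a^2b^2\geq0$, and taking convex combinations preserves this.

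Next I would use the standard fact that for a properly convex $\O$, a point $x$ and a hyperplane $H$ with $x$ on one side and a convex set $K$ on the other, one has $d_\O(x,K)\geq d_\O(x,H\cap\O)$. This follows because any segment from $x$ to $K$ crosses $H$, so $d_\O(x,K)\geq\inf_{y\in H\cap\O}d_\O(x,y)$. Hence it suffices to show that the distance from $x_\alpha$ to $H\cap\mc O_{\max}$ is attained at $X^4+Y^4$. For this I would use the $S$-symmetry together with the reflection symmetry $X\mapsto -X$ (in $\mathrm{GL}_2$, which also preserves $\mc O_{\max}$ and fixes $L$ pointwise), and argue by convexity of the distance-to-$x_\alpha$ function restricted to the hyperplane slice: the averaged point of an optimal point over the finite symmetry group stays in $H$, lies on $L$, and is no farther. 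This is the main obstacle. Hilbert balls are convex, so averaging within a ball is fine, but the averaged point must land on $L\cap H=\{X^4+Y^4\}$. The fixed set of the symmetry group generated by $X\leftrightarrow Y$ and $X\mapsto-X$ is $\Span(X^4+Y^4,X^2Y^2)$, which meets $H$ exactly in $X^4+Y^4$, so this works. If convexity of balls is the worry, I would instead note that the ball around $x_\alpha$ is invariant under the group, hence its intersection with $H$ is a convex invariant set, nonempty as soon as it is nonempty, and therefore contains the fixed point $X^4+Y^4$.

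The estimates follow from elementary calculus. Write $f(\alpha)=-\tfrac12\log(1-2\alpha)$. Then $f'(\alpha)=\tfrac1{1-2\alpha}\geq1$, which gives $f\geq\alpha$. On the other side, $-\log(1-t)\leq\tfrac t{1-t}$ with $t=2\alpha$ gives $f\leq\tfrac{\alpha}{1-2\alpha}$. The reformulation in terms of $\eta$ then follows by $\SL_2(\R)$-equivariance of the parametrisation, together with the invariance of $\dmax$ and $\mc C$.
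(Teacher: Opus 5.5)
Your proof is correct. The upper bound via the cross-ratio on the line $L$ and the calculus estimates match what the paper calls ``just computations''. For the key step, showing that $[X^4+Y^4]$ realises $\dmax(x_\alpha,\mc C)$, you take a genuinely different route.

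The paper works with a pencil of three hyperplanes through the common codimension-two subspace $\PP(\langle X^4-Y^4,X^3Y,XY^3\rangle)$:
\begin{itemize}
\item $H_a=\{a+e=0\}$, supporting $\mc O_{\max}$ at $[X^2Y^2]$;
\item $H_x=\{c=0\}$, supporting $\mc C$ at $[X^4+Y^4]$;
\item $H_b=\{a+c+e=0\}$, supporting $\mc O_{\max}$ at $[(X^2-Y^2)^2]$.
\end{itemize}
It checks these supporting properties and concludes by monotonicity of cross-ratios of hyperplanes in a pencil.

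You keep only the middle hyperplane $H=H_x$. Your check that $\mc C\subset\{c\geq 0\}$, via the coefficient $6a^2b^2$ of $(aX+bY)^4$, is more direct than the paper's tangent-space and $g_t$-invariance argument. You then use that projective segments are Hilbert geodesics to reduce to the distance from $x_\alpha$ to the slice $H\cap\mc O_{\max}$. In place of the two outer hyperplanes you use a symmetry argument: the Klein four-group generated by $X\mapsto -X$ and $X\leftrightarrow Y$ fixes $x_\alpha$, preserves $H$ and $\mc O_{\max}$, and has $[X^4+Y^4]$ as its only fixed point in $H\cap\mc O_{\max}$. Convexity of Hilbert balls plus a finite-group fixed point then finishes.

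What you gain is that you never have to find or verify supporting hyperplanes of $\mc O_{\max}$. What it costs is reliance on convexity of Hilbert balls, which is standard (for instance via the dual-cone formula for the Hilbert distance), and on the orientation-reversing element $X\mapsto -X$ of $\GL_2(\R)$. That element is genuinely needed. The stabiliser $S$ in $\PSL_2(\R)$ alone fixes the whole plane $\PP(\Span(X^4+Y^4,X^2Y^2,X^3Y-XY^3))$, which meets $H\cap\mc O_{\max}$ in a segment rather than a point.

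Three small clean-ups for the write-up:
\begin{itemize}
\item Drop the abandoned attempts with $\beta$.
\item Replace ``nonempty as soon as it is nonempty'' by ``nonempty for every radius $r>d(x_\alpha,H\cap\mc O_{\max})$''. This gives $\dmax(x_\alpha,[X^4+Y^4])\leq r$ for all such $r$, so no attainment issue arises.
\item Do the averaging on linear representatives in the open cone, summing $\sum_g g\cdot P$, or in a $G$-invariant affine chart. The average is then a $G$-invariant vector, so it lies in $\Span(X^4+Y^4,X^2Y^2)$ and not merely in some projective eigenline.
\end{itemize}
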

\begin{proof}
 The formulas and estimates for the distance are just computations, so it suffices to prove $[X^4+Y^4]$ is the closest point projection of $[X^4+Y^4-4\alpha X^2Y^2]$.
 
 Let  $x=[X^4+Y^4]$, $y=[X^4+Y^4-4\alpha X^2Y^2]$, $a=[X^2Y^2]$ and $b=[(X^2-Y^2)^2]$ so that $a,x,y,b$ are aligned in this order and $\dmax (x,y)$ is half the logarithm of the crossratio of these four points.
 The strategy is classical in Hilbert geometry: we find three concurrent hyperplanes $H_x,H_a,H_b$ (their intersection $J$ is a projective plane) containing respectively $x,a,b$ such that $\mc O_{\max}$ is coined between $H_a$ and $H_b$, the convex set $\mc C$ is coined in between $H_a$ and $H_x$ and $y$ is between $H_x$ and $H_b$.
 In Figure~\ref{fig:fundamental_segment}, these hyperplanes intersect the chosen section in horizontal lines.
 Once we find this we can use classical properties of the crossratio.
 Indeed let $x'\in\mc C$ and let us show $\dmax (x',y)\geq \dmax (x,y)$.
 Let $a',b'\in\partial\mc O_{\max}$ such that $a',x',y,b'$ are aligned in this order.
 For each $p=a',x',y,b'$ let $H_p$ be the hyperplane through $p$ and $J$.
 Then the all the hyperplanes $H_a,H_{a'},H_{x'},H_x,H_{y},H_{b'},H_b$ wind around $J$ in this order because of the way we have promised to construct $H_a,H_x,H_b$.
 This makes the crossratio of $H_{a'},H_{x'},H_y,H_{b'}$ bigger than that of $H_a,H_x,H_y,H_b$, which concludes the proof.
 
 We let $H_a=\PP(\langle X^4-Y^4,X^2Y^2, X^3Y,XY^3\rangle)$ and $H_b=\PP(\langle X^4-Y^4,(X^2-Y^2)^2, X^3Y,XY^3\rangle)$ and $H_x=\PP(\langle X^4, Y^4,X^3Y,XY^3\rangle)$, which intersect at the projective plane $\PP(\langle X^4-Y^4,X^3Y,XY^3\rangle)$.
 
 It suffices to check that $H_a$ and $H_b$ are supporting hyperplanes of $\mc O_{\max}$ at respectively $a $ and $b$, and $H_x$ is a supporting hyperplane of $\mc C$ at $x$.

 Let us check $H_x$ is a supporting hyperplane of $\mc C$.
 Recall from Lemma~\ref{lem: sl2orbits param} that the face $F_x$ is an interval between two points of the limit set, here $X^4$ and $Y^4$ ($(X^4,Y^4)$ is clearly in $\mc C$ and does contain $x$).
 Moreover, $\dni\mc C$ is a smooth $\SL_2(\R)$-orbit, so there is a unique supporting hyperplane $T_x\dni\mc C$ at $x$, which is also the unique supporting hyperplane for the entire $F_x$.
 Like $F_x$, the hyperplane $T_x\dni\mc C$ must be $g_t$-invariant.
 Finally $T_x\dni\mc C$ does not contain $X^2Y^2$, otherwise it would meet $\mc O_{\min}$.
 We conclude by observing $H_x$ is the only $g_t$-invariant hyperplane not containing $X^2Y^2$.
 
 Let us check $H_a$ is a supporting hyperplane of $\mc O_{\max}$.
 It is clear that $\PP(\langle X^4,X^2Y^2, X^3Y,XY^3\rangle)$ and $\PP(\langle Y^4,X^2Y^2, X^3Y,XY^3\rangle)$ are supporting hyperplanes, so among the two "segments" of hyperplanes joining them, one is made of supporting hyperplanes, and it is the segment containing $H_a$ (the other contains $\PP(\langle X^4+Y^4,X^2Y^2, X^3Y,XY^3\rangle)$ which is clearly not supporting).
 
 At last, $H_b$ is also a supporting hyperplane of $\mc O_{\max}$ because it is the image of $H_a$ under 
  $r_{\tfrac\pi4}=\rho\begin{sm}
 1/{\sqrt 2}&-1/{\sqrt 2}\\1/{\sqrt 2}&1/{\sqrt 2}
 \end{sm}$.
\end{proof}

For any open convex domain $\O$ such that $\mc C\subset\O\subset\overline\O\smallsetminus\Lambda\subset\mc O_{\max}$, the nonideal boundary $\partial\O\smallsetminus\Lambda$ is parametrised by a continuous function $u_\O:\dni\mc C\to(0,1/2)$.
If $\O$ is $\G$-invariant then so is $u_\O$, and it descends to a function $u_\O:\dni\mc C/\G\to(0,1/2)$ denoted the same way, and called the \emph{height function of $\partial\O$}.

We want to know how small can that function be.
By continuity, it is bounded below by a positive constant on every compact subset of $\dni\mc C/\G$.
If there was a global positive lower bound then $\O$ would contain $\mc O_R$ for some $R$ and the volume of the convex core in $\O$ would smaller than that in $\mc O_R$, hence finite by the previous section, so we want $u_\O$ to be smaller and smaller as we escape all compact sets of $\dni\mc C/\G$, i.e.\ as we go deep in the cusps.

\subsection{First lower bound on the height function}

Convexity of $\O$ gives constraints on the height function: for any $x\in\dni\mc C$, the point $y$ above it at height $u_\O(x)$ is by definition in $\dni\O$, and hence the convex hull of $y$ and $\mc C$ must lie in $\overline\O$, which gives a lower bound on $u_\O(x')$ for $x'$ nearby $x$.
One could say that the height at the point $x$ ``pulls upward'' the height at neighboring points.
More precisely, we have the following.

\begin{lemma}\label{lem functional ineq}
    Let $\O$ be a convex domain such that $\mc C\subset\O\subset\overline\O\smallsetminus\Lambda\subset\mc O_{\max}$.
    Then for any $t\geq 0$ we have
    $$
    u_\O(g_{t/4}[X^4+Y^4])\geq u_\O([X^4+Y^4])e^{-t}
    $$
    In other words, using the action by $\SL_2(\R)$ we can deduce that for any biinfinite geodesic path $c:\R\to\dni\mc C$, for any $t<s\in\R$ we have
    $$
    u_\O(c(t))e^{t-s}\leq u_\O(c(s))\leq u_\O(c(t))e^{s-t}
    $$
    which exactly means $\log u_\O(c(t))$ is $1$-Lipschiptz.
\end{lemma}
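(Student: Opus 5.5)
The plan is to use convexity of $\overline\O$ together with the explicit diagonal action of $g_s$ on monomials. Recall that $g_s=\rho\begin{sm}e^s&0\\0&e^{-s}\end{sm}$ acts by $X^4\mapsto e^{4s}X^4$, $Y^4\mapsto e^{-4s}Y^4$ and fixes $X^2Y^2$.

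\textbf{Step 1 (how heights transform).} Since the parametrisation $\dni\mc C\times[0,1/2)\to\mc O_{\max}\smallsetminus\mc O_{\min}$ is $\SL_2(\R)$-equivariant, the point lying at height $\beta$ above $g_{t/4}[X^4+Y^4]=[e^{t}X^4+e^{-t}Y^4]$ is
$$
g_{t/4}[X^4+Y^4-4\beta X^2Y^2]=[e^{t}X^4+e^{-t}Y^4-4\beta X^2Y^2].
$$

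\textbf{Step 2 (exhibit a point of $\overline\O$ at the target height).} Set $\alpha=u_\O([X^4+Y^4])$ and $y=[X^4+Y^4-4\alpha X^2Y^2]$. By definition of the height function, $y\in\partial\O\subset\overline\O$, and $[X^4]\in\Lambda\subset\overline\O$. For $t\geq0$, consider the combination with nonnegative coefficients
$$
e^{-t}\,(X^4+Y^4-4\alpha X^2Y^2)+(e^{t}-e^{-t})\,X^4=e^{t}X^4+e^{-t}Y^4-4\alpha e^{-t}X^2Y^2 .
$$
This point lies on the segment $[y,X^4]$, which is contained in $\overline\O$ by convexity of $\overline\O$. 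By Step 1 it is the point at height $\alpha e^{-t}$ above $g_{t/4}[X^4+Y^4]$.

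\textbf{Step 3 (monotonicity along the fibre).} Consider the height segment above $x'=g_{t/4}[X^4+Y^4]$, i.e.\ the points above $x'$ with height in $[0,1/2)$. Its intersection with $\overline\O$ is a segment containing $x'\in\mc C\subset\O$. Because $\O$ is convex, this intersection is an initial subsegment from $x'$ up to the boundary point at height $u_\O(x')$. A point of the fibre at height $\alpha e^{-t}$ lying in $\overline\O$ therefore forces
$$
u_\O(x')\geq \alpha e^{-t},
$$
which is the first inequality of the lemma.

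\textbf{Step 4 (translate to geodesics).} By Lemma~\ref{lem: sl2orbits param}, under $\phi$ the face $F_x=(X^4,Y^4)$ corresponds to the geodesic $\ell_0$. The path $s\mapsto g_{s/4}[X^4+Y^4]$ is the unit-speed parametrisation of the geodesic $\{(q,\ell_0):q\in\ell_0\}$ for the normalisation implicit in the statement. Every biinfinite geodesic path in $\dni\mc C$ is an $\SL_2(\R)$-translate of this one, possibly reparametrised by a time shift or a reversal. The element $s=\begin{bsmallmatrix}0&-1\\1&0\end{bsmallmatrix}$ swaps $X$ and $Y$ (up to sign), so it conjugates $g_t$ to $g_{-t}$ and reverses the direction of the geodesic.

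Since $u_\O$ is defined equivariantly, apply Step 3 to the basepoint $c(t)$, with the group element bringing it to $[X^4+Y^4]$ and $c$ to the standard path. This gives $u_\O(c(s))\geq u_\O(c(t))e^{t-s}$ for $s>t$. Applying the same argument to the reversed path based at $c(s)$ gives $u_\O(c(t))\geq u_\O(c(s))e^{t-s}$, which is the upper bound $u_\O(c(s))\leq u_\O(c(t))e^{s-t}$. Together these say $\log u_\O\circ c$ is $1$-Lipschitz.

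\textbf{Main obstacle.} The algebra in Step 2 is trivial. The only point needing care is Step 4: matching the time normalisation $t/4$ with "geodesic path", and checking that the translation to an arbitrary geodesic respects equivariance of $u_\O$ under $\SL_2(\R)$ rather than only under $\G$. I would handle this by working upstairs on $\dni\mc C$, where $u_\O$ is defined for any convex $\O$ and the inequality is intrinsic. The argument is applied to the translated domain $h\O$, whose height function is $u_\O\circ h^{-1}$.
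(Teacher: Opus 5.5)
Your proof is correct and follows the paper's argument: both take the convex combination of the boundary point $[X^4+Y^4-4\alpha X^2Y^2]$ with $[X^4]$, which gives $[e^{t}X^4+e^{-t}Y^4-4\alpha e^{-t}X^2Y^2]\in\overline\Omega$, and then read off via $g_{t/4}$ that this point lies at height $\alpha e^{-t}$ above $g_{t/4}[X^4+Y^4]$. Your Steps 3 and 4 only spell out what the paper leaves implicit: that $\overline\Omega$ meets each height fibre in an initial segment, and that the general geodesic statement follows from applying the base case to $h\Omega$ with $u_{h\Omega}=u_\Omega\circ h^{-1}$, which is needed since $\Omega$ itself need not be $\mathrm{SL}_2(\mathbb{R})$-invariant.
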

\begin{proof}
 Let $u=u_\O([X^4+Y^4])$, so that $[X^4+Y^4-4uX^2Y^2]\in\partial\O$.
 By convexity, taking a convex combination with $[X^4]$, we get $[X^4+e^{-2t}Y^4-4ue^{-2t}X^2Y^2]$ lying in $\overline\O$ above $[X^4+e^{-2t}Y^4]=g_{t/4}[X^4+Y^4]$ (meaning: the closest point projection on $\dni\mc C$ is $[X^4+e^{-2t}Y^4]$).
 It remains to compute at which height it lies, and for this we apply $g_{t/4}^{-1}$, yielding $g_{t/4}^{-1}[X^4+e^{-2t}Y^4-4ue^{-2t}X^2Y^2]=[X^4+Y^4-4ue^{-t}X^2Y^2]$, so the height is $ue^{-t}$, which must be lower than $u_\O(g_{t/4}[X^4+Y^4])$.
\end{proof}

Given a lattice $\G\in\PSL_2(\R)$,
combining the above with the decomposition into a compact part and finitely many cuspidal parts we get the following first lower bound.

Fix $x_0\in\mc C$ and consider the distance function $x\in\mc C\mapsto \dmax (x,\G x_0)$, which descends to a function $f:\mc C/\G\to[0,\infty)$ which we call a \emph{depth function} on $\mc C/\G$.

\begin{proposition}\label{first lower bound}
    Let $\O$ be a $\G$-invariant convex domain such that $\mc C\subset\O\subset\overline\O\smallsetminus\Lambda\subset\mc O_{\max}$.
    Let $f:\mc C/\G\to[0,\infty)$ be a depth function, and $u=u_\O:\dni\mc C/\G\to\R$ the height function of $\O$.
    
    Then there is a constant $C>0$ such that for any $x\in\dni\mc C/\G$ we have 
    $$
    u_{\O}(x)\geq C e^{-f(x)}.
    $$
\end{proposition}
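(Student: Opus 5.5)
The plan is to lift everything to $\dni\mc C$ and combine the Lipschitz property of $\log u_\O$ along geodesics of faces (Lemma~\ref{lem functional ineq}) with a compactness argument. Fix $x_0\in\mc C$. We choose $R_0>0$ such that the closed set $K=\{x\in\dni\mc C: \dmax(x,\G x_0)\le R_0\}$ has compact image in $\dni\mc C/\G$ and contains a fundamental domain for $\G$ acting on $\phi^{-1}(\text{thick part})$. Here we use that $\mc C$ is acted on properly and cocompactly by $\PSL_2(\R)$ and $\G$ is a lattice, so the projection $\pi:\dni\mc C\to\mb H^2$ from the proof of point (b) is proper and equivariant, and $\dmax$ on $\mc C$ is quasi-isometric to the hyperbolic distance on $\mb H^2$ via $\pi$. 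By continuity and positivity, $u_\O\geq c_0>0$ on $K$.

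Now take any $x\in\dni\mc C$. By Lemma~\ref{lem: sl2orbits param}, $x$ lies on a face $F_x$, which is an open segment between two points of $\Lambda$. Its image under $\phi$ is the set of points $(y,\ell)$ with $y\in\ell$, that is, a full geodesic of $\mb H^2$ lifted to $PT\mb H^2$. The parametrisation $t\mapsto g_{t/4}\cdot$ along the face is exactly the curve $c$ in Lemma~\ref{lem functional ineq}. Since every geodesic line in $\mb H^2/\G$ returns to the thick part, or at least one of its two ends does, the key claim is the following: \emph{there is a point $x'\in F_x\cap K$ (up to $\G$) whose $c$-parameter differs from that of $x$ by at most $f(x)+A$} for a uniform constant $A$. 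To prove it, I would use that a geodesic through a point $p$ at depth $D$ in a cusp (horoball) must exit that horoball. The worst case is the direction pointing toward the cusp point, where the geodesic exits backwards within hyperbolic time about $D+A$. Every other direction exits within time $\le D+A$ in one of the two directions. Since a geodesic in $\mb H^2$ leaving a horoball enters the thick part (or at least the $R_0$-neighborhood of $\G x_0$) within bounded time, this gives the claim.

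Then Lemma~\ref{lem functional ineq} gives $u_\O(x)\geq u_\O(x')e^{-|t-s|}\geq c_0 e^{-A'}e^{-\lambda f(x)}$. Two conversions are needed here: between the $c$-parameter and the hyperbolic length along $\ell$, and between $f$ and hyperbolic depth. The main obstacle is getting the exponent exactly $1$, i.e.\ checking that these conversions are isometric up to additive constants rather than merely quasi-isometric. The time $s$ along $c$ corresponds to $g_{s/4}$, i.e.\ hyperbolic translation length $s/2$ with the standard normalization. Also $\dmax$ restricted to the $g_t$-orbit $(X^4,Y^4)$ gives $\dmax(x,g_{s/4}x)=s/2$ as well, since the eigenvalue ratio on $X^4,Y^4$ is $e^{2s}$. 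So the parameter $s$ equals $2\dmax$ along faces. I would therefore compare $f$ with distances measured along faces directly. Inside a cusp, the depth $f(x)$ is, up to an additive constant, the Hilbert distance from $x$ to the boundary of the horoball neighborhood. This distance is realised along the face pointing to the parabolic point, where $\dmax$ and the $c$-parameter agree up to the factor computed above. If the factor comes out as $2$ rather than $1$, I would redefine the comparison carefully using that the convex-combination argument of Lemma~\ref{lem functional ineq} is sharp. Any additive error is then absorbed into $C$.
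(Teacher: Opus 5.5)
\textbf{There is a gap.} Your strategy is the paper's: bound $u_\O$ below on the compact part, follow the face $F_x$ out of the cusp using the hyperbolic escape property (Fact~\ref{escape the cusp}), and apply Lemma~\ref{lem functional ineq}. But the step that carries the content of the statement, namely that the exponent is exactly $1$, is left open, and the computation you give for it is wrong.

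Since $g_{s/4}$ multiplies $X^4$ and $Y^4$ by $e^{s}$ and $e^{-s}$, the cross-ratio along $(X^4,Y^4)$ between $x=[X^4+Y^4]$ and $g_{s/4}x=[X^4+e^{-2s}Y^4]$ is $e^{2s}$. Hence $\dmax(x,g_{s/4}x)=\tfrac12\log e^{2s}=s$, not $s/2$. So the parameter in Lemma~\ref{lem functional ineq} already is $\dmax$-arclength along faces; that is what ``$\log u_\O(c(t))$ is $1$-Lipschitz'' means. No conversion between ``$c$-parameter'' and $\dmax$ is needed.

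Your fallback would not have worked anyway. Sharpness of Lemma~\ref{lem functional ineq} says the pull along a face is never stronger than $e^{-t}$. So if the face-time to leave the cusp really were about $2f(x)$, your argument could only give $u_\O\geq Ce^{-2f}$, and nothing could repair it.

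What you actually need is that the $\dmax$-length along $F_x$ from $x$ to the compact part is at most $f(x)+O(1)$. Fact~\ref{escape the cusp} bounds the corresponding hyperbolic length by $d_{\mb H^2}(\pi x,\partial H)+C$. Along a face one has exactly $\dmax=\lambda\, d_{\mb H^2}\circ\pi$, with $\lambda=2$ in curvature $-1$, as your own translation-length remark shows. So you must prove $\lambda\, d_{\mb H^2}(\pi x,\pi y)\leq \dmax(x,y)+C$ for all $x,y\in\mc C$ with the \emph{same} $\lambda$. Knowing that $\dmax$ on $\mc C$ is quasi-isometric to $\mb H^2$ is not enough. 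Your claim that the distance to the horoball boundary ``is realised along the face pointing to the parabolic point'' does not help either, since $F_x$ generally does not point into the cusp.

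The missing argument is short:
\begin{enumerate}
\item The fibres of $\pi|_{\mc C}$ are compact and permuted transitively by $\PSL_2(\R)$, so they have a uniform $\dmax$-diameter $D_0$.
\item Any two points $\pi x,\pi y$ lie on a common geodesic $\ell$. Its lift $\phi^{-1}(\{(z,\ell):z\in\ell\})$ is a face containing points $x',y'$ over $\pi x,\pi y$ with $\dmax(x',y')=\lambda\, d_{\mb H^2}(\pi x,\pi y)$.
\item The triangle inequality then gives $|\dmax(x,y)-\lambda\, d_{\mb H^2}(\pi x,\pi y)|\leq 2D_0$.
\item Choose $\pi x_0$ outside the cusp horoballs. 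Then $d_{\mb H^2}(\pi x,\partial H)\leq d_{\mb H^2}(\pi x,\G\pi x_0)$, because any path from $\pi x$ to $\G\pi x_0$ crosses $\partial H$.
\end{enumerate}
Together these give a face-time of at most $\lambda\big(d_{\mb H^2}(\pi x,\G\pi x_0)+C\big)\leq f(x)+2D_0+\lambda C$. With Lemma~\ref{lem functional ineq}, this yields $u_\O(x)\geq C e^{-f(x)}$ with exponent $1$.
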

\begin{proof}
 Recall we identified $\dni\mc C$ with $PT\mb H^2$ equivariantly, which gives us a projection $\pi:\dni\mc C\to\mb H^2$.
 Note that if two points $x,y\in\dni\mc C$ are on the same geodesic at distance $t$ for the Hilbert metric of $\mc O_{\max}$, then $\pi(x)$ and $\pi(y)$ are actually at distance $t/4$!
 (To prove this, use the $\SL_2(\R)$ to reduce to the elementary case where $x,y\in[X^4,Y^4]$.)
 
 Decompose $\mb H^2/\G$ into a compact part $K$ containing $\pi(x_0)$ and finitely many cuspidal parts $H_1,\dots,H_n$ that are quotients of horoballs.
 This gives us a decomposition of $\dni\mc C/\G$ into a compact part $\pi^{-1}K$ containing $x_0$ and finitely many cuspidal-like parts $\pi^{-1}H_1,\dots,\pi^{-1}H_n$.
 By compactness of $\pi^{-1}K$ and continuity of $u$, the function $u$ is bounded below on $K$ by a constant $C>0$.
 
 Let $x$ be in a cuspidal part $\pi^{-1}H_i\cap\dni\mc C/\G$.
 Consider a lift $\tilde x\in\dni\mc C$, and its face $F$ in $\dni\mc C$, which is an interval between two points of $\Lambda$, and which is also the concatenation of two antipodal rays from $\tilde x$ that project on two rays of $\dni\mc C/\G$ from $x$.
 Following one of these rays will lead you back to a point $y$ of the compact part $\pi^{-1}K$ in time bounded above by $f(x)$ plus an additive error term, exactly the way this works in the hyperbolic surface with cusps $\mb H^2/\G$, see Fact~\ref{escape the cusp} below.
 
 By Lemma~\ref{lem functional ineq}, we have $u_\O(x)\geq u_\O(y)e^{-\dmax (x,y)}$.
 This concludes the proof as $u_\O(y)\geq C$ and $\dmax (x,y)\leq f(x)+C'$.
\end{proof}

\begin{fact}\label{escape the cusp}
    There is a constant $C>0$ such that for any horoball $H$ of $\mb H^2$, for any geodesic chord $[p,q]\subset H$ with $p,q\in\partial H$, for any $x\in[p,q]$, at least one of $[p,x]$ and $[x,q]$ has length at most $d_{\Hb^2} (x,\partial H)+C$.
\end{fact}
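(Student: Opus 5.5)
We work in the upper half-plane model and first use the isometry group to normalise. All horoballs are isometric, so we may take $H=\{z\in\Hb^2 : \mathrm{Im}\,z\geq 1\}$, centred at $\infty$ with boundary horocycle $\{\mathrm{Im}\,z=1\}$. A geodesic chord $[p,q]$ with $p,q\in\partial H$ lies on a Euclidean semicircle centred at a real point $c$, of radius $r>1$. The chord is then the arc of that semicircle above height $1$, symmetric about its top point $m=c+ir$. For $x\in[p,q]$, the distance $d_{\Hb^2}(x,\partial H)$ is realised along the vertical geodesic and equals $\log \mathrm{Im}\,x$, since vertical lines meet horocycles centred at $\infty$ orthogonally.

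By the symmetry $z\mapsto 2c-\bar z$, we may assume $x$ lies on the half of the chord between $m$ and $q$. We then claim that $[x,q]$ has length at most $\log\mathrm{Im}\,x+C$. To prove this, parametrise the semicircle by the angle $\theta\in(0,\pi)$, with $z=c+re^{i\theta}$. Along it the hyperbolic arclength element is $d\theta/\sin\theta$, and $\mathrm{Im}\,z=r\sin\theta$. Write $\theta_x$ and $\theta_q$ for the angles of $x$ and $q$; here $\theta_q\leq\theta_x\leq\pi/2$ and $r\sin\theta_q=1$. The length of $[x,q]$ is
$$
\int_{\theta_q}^{\theta_x}\frac{d\theta}{\sin\theta}=\log\frac{\tan(\theta_x/2)}{\tan(\theta_q/2)}.
$$

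It remains to compare this length with $\log(r\sin\theta_x)$. On $(0,\pi/2]$ we have $\tan(\theta/2)\asymp\sin\theta$ uniformly: indeed $\tan(\theta/2)=\sin\theta/(1+\cos\theta)$, and $1+\cos\theta\in[1,2]$. Hence
$$
\log\frac{\tan(\theta_x/2)}{\tan(\theta_q/2)}\leq\log\frac{\sin\theta_x}{\sin\theta_q}+\log 2=\log(r\sin\theta_x)+\log 2,
$$
where the last equality uses $\sin\theta_q=1/r$. This gives the fact with $C=\log 2$.

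There is no real obstacle; the only care needed is to pick the half of the chord that does not contain the top point, so that both angles lie in $(0,\pi/2]$.
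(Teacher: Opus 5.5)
Your proof is correct. Normalising to $H=\{\mathrm{Im}\,z\geq 1\}$ is legitimate. On the semicircle the hyperbolic arclength element is $d\theta/\sin\theta$, and the distance to the horocycle is $\log\mathrm{Im}\,x$. The elementary bound $1\leq 1+\cos\theta\leq 2$ on $(0,\pi/2]$ then gives the statement with the explicit constant $C=\log 2$.

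This constant is optimal. Take $x$ to be the top point of the chord, so both halves have equal length, namely $\log\bigl(r(1+\cos\theta_q)\bigr)$, and let $r\to\infty$; the excess over $d_{\Hb^2}(x,\partial H)=\log r$ tends to $\log 2$.

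The paper gives no proof of this statement. It is quoted as a standard fact of hyperbolic geometry and used only in the proof of Proposition~\ref{first lower bound}. So there is no argument to compare against, and your computation simply supplies the omitted verification. You could add one line noting that the degenerate chord $p=q$ is trivial, since your semicircle description implicitly assumes $p\neq q$.
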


\subsection{Height function and differentiability}

Although we do not give a proof of it here, one can find  a convex set $\O$ such that $\mc C\subset\O\subset\mc O_1$ and $u_\O(x)\leq e^{-f(x)}$ for all $x\in\dni\mc C/\G$.
However, the following result says that such a convex set cannot be round, more precisely its boundary cannot be differentiable at parabolic points.

\begin{lemma}\label{differentiability}
    Let $\O$ be a convex domain such that $\mc C\subset\O\subset\overline\O\smallsetminus\Lambda\subset\mc O_{\max}$, and $p\in\Lambda$.
    Then $p$ is a differentiable point of $\partial\O$ if and only if for any ray $(r(t))_{t\geq0}\subset\dni\mc C$ ending at $p$, we have $u_\O(r(t))e^t\to\infty$ as $t\to\infty$, if and only if there is a ray $(r(t))_{t\geq0}\subset\dni\mc C$ ending at $p$ such that $u_\O(r(t))e^t\to\infty$ as $t\to\infty$.
\end{lemma}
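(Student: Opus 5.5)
We reduce by $\SL_2(\R)$-equivariance to $p=[X^4]$ and work in the affine chart $\{x_1\neq 0\}$, with coordinates $(x_2,x_3,x_4,x_5)$ given by the coefficients of $X^3Y,X^2Y^2,XY^3,Y^4$. The hyperplane $\{x_5=0\}$ supports $\mc O_{\max}$, hence also $\O$, at $p$. So $p$ is a differentiable point of $\partial\O$ if and only if the tangent cone $T_p\O$ (the closure of the union of the rays from $p$ through $\O$) is the half-space $\{x_5\geq 0\}$. The plan is to prove two implications: \emph{differentiable $\Rightarrow$ the condition holds for every ray}, and \emph{the condition holds for one ray $\Rightarrow$ differentiable}. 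Together with the trivial implication from ``every'' to ``some'', this gives the three equivalences. Every ray of $\dni\mc C$ ending at $p$ lies in a face $[p,q]$ with $q\in\Lambda$. The unipotent stabiliser of $p$ in $\SL_2(\R)$ acts transitively on $\Lambda\smallsetminus\{p\}$ and preserves $\{x_5=0\}$. We may therefore normalise a given ray to $r(t)=g_{t/4}[X^4+Y^4]=[X^4+e^{-2t}Y^4]$, at the cost of replacing $\O$ by its image, which does not affect differentiability at $p$.

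\textbf{Step 1 (explicit boundary points and monotonicity).} Write $u(t)=u_\O(r(t))$. Applying $g_{t/4}$ to $[X^4+Y^4-4u(t)X^2Y^2]$ shows that the point of $\partial\O$ above $r(t)$ is
$$y_t=[X^4+e^{-2t}Y^4-4u(t)e^{-t}X^2Y^2],$$
so the direction from $p$ to $y_t$ is proportional to $(0,-4u(t)e^{t},0,1)$. The computation in the proof of Lemma~\ref{lem functional ineq} (a convex combination of $y_t$ with $[X^4]$) gives $u(s)e^{s}\geq u(t)e^{t}$ for $s\geq t$. Hence $u(t)e^t$ is nondecreasing and has a limit $L\in(0,\infty]$.

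\textbf{Step 2 (condition $\Rightarrow$ differentiable).} First we describe the tangent cone of $\mc C$ at $p$. By \S\ref{sec:stereo}, it is the closed cone generated by the vectors $(4s^3,6s^2,4s,1)$ together with their limit directions $\pm(1,0,0,0)$. So it contains the $x_2$-axis and lies over the parabolic region $\{6x_3x_5\geq x_4^2\cdot\tfrac{9}{4}\cdot\tfrac{1}{6}\}$, up to the exact constant. Now suppose $L=\infty$. Then the directions of $y_t$ converge to $(0,-1,0,0)$, which therefore lies in $T_p\O$. The convex cone generated by $(0,-1,0,0)$ and the vectors $(4s^3,6s^2,4s,1)$ contains $(0,0,4s,1)$ for every $s$, and also contains the whole $x_2$-line. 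Its closure is therefore $\{x_5\geq0\}$, so $p$ is a differentiable point.

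\textbf{Step 3 (differentiable $\Rightarrow$ condition), the main obstacle.} Suppose $L<\infty$. We want a second supporting hyperplane at $p$. The linear forms vanishing at $p$ and nonnegative on $\Lambda$ are $\ell=\alpha x_3+\beta x_4+\gamma x_5$ with $\ell((sX+Y)^4)$ a nonnegative quadratic in $s$. The natural candidate is $\ell_L=x_3+4Lx_5$, which satisfies $\ell_L(y_t)=e^{-2t}(4L-4u(t)e^t)\geq0$ along the ray. The difficulty is that $\O$ is only $\G$-invariant, so nonnegativity along one ray does not control all of $\O$ near $p$. I would first try a limiting argument: take supporting hyperplanes $H_t$ of $\O$ at $y_t$ and renormalise them by $g_{t/4}^{-1}$, or by the diagonal subgroup fixing $p$ and $[Y^4]$. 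Bounded $u(t)e^t$ should keep these renormalised hyperplanes in a compact family bounded away from $\{x_5=0\}$. Their limit would then be a supporting hyperplane of $\O$ at $p$ distinct from $\{x_5=0\}$, contradicting differentiability. Failing that, a fallback is to control heights near $p$ above the other faces $[p,q]$ through $p$: use convex hulls of $y_t$, of $\mc C$, and of the unipotent translates, in the spirit of Lemma~\ref{lem functional ineq}, to show that $L<\infty$ forces $\ell_L\geq -o(x_5)$ on $\O$. This again excludes the direction $(0,-1,0,0)$ from $T_p\O$.
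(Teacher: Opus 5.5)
Your reduction, Step 1 and Step 2 are correct. Step 2 is the paper's argument for ``some ray $\Rightarrow$ differentiable'', phrased with tangent cones. The paper projects stereographically from $p$ and notes that the convex hull of the parabolic cylinder $\pi(\mc C)$ and the ray $\{[Y^4-4uX^2Y^2]:u\geq0\}$ is the whole chart.

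The gap is Step 3. The implication ``differentiable $\Rightarrow$ $u_\O(r(t))e^t\to\infty$ along every ray'' is not proved; you only describe strategies you would try. Your first strategy does not work as stated. Renormalising by $g_{t/4}^{-1}$ gives supporting hyperplanes of the domains $g_{t/4}^{-1}\O$ at the points $[X^4+Y^4-4u(t)X^2Y^2]$, which converge to $[X^4+Y^4]$; these are not supporting hyperplanes of $\O$ at $p$. Without renormalising, supporting hyperplanes at $y_t\to p$ simply converge to $\{x_5=0\}$ when $p$ is a differentiable point, so no contradiction appears. The fallback is not carried out.

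The missing observation is that you need neither global control of $\O$ near $p$ nor an explicit second supporting hyperplane; a single direction suffices. If $p$ is a differentiable point then $T_p\O=\{x_5\geq0\}$. Every $d$ with $x_5(d)>0$ is then interior to the tangent cone, so $p+\eps d\in\O$ for all small $\eps>0$. Apply this to $d=(0,-4v,0,1)$ with $v>0$ arbitrary. For $\eps=e^{-2t}$,
\[
p+\eps d=[X^4+e^{-2t}(Y^4-4vX^2Y^2)]=g_{t/4}[X^4+Y^4-4ve^{-t}X^2Y^2],
\]
which is the point above $r(t)$ at height $ve^{-t}$. The points of the open convex set $\O$ on that fibre are exactly those of height $<u_\O(r(t))$. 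Hence $u_\O(r(t))e^t>v$ for all large $t$, and since $v$ is arbitrary, $L=\infty$.

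Equivalently, suppose $L<\infty$ and take $v>L$. Your Step 1 bound $u(t)\leq Le^{-t}$ shows that the whole segment $\{p+\eps d:0<\eps\leq 1\}$ misses $\O$. So $d$, which is interior to $\{x_5\geq0\}$, is not interior to $T_p\O$, and therefore $T_p\O\neq\{x_5\geq 0\}$. This is exactly the paper's proof, which phrases it as follows: the segment from $p$ to the point $q\in\partial\mc O_{\max}$ where this line leaves $\overline{\mc O}_{\max}$ must cross $\dni\O$.
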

\begin{proof}
 Suppose $\partial\O$ is differentiable at $p$, and take a ray $r(t)\to p$ in $\dni\mc C$.
 Using the action of $\SL_2(\R)$ we can assume $p=[X^4]$ and $r(t)=[e^tX^4+e^{-t}Y^4]$.
 
 Let $u>0$.
 The line through $p$ and $[X^4+Y^4-4uX^2Y^2]$ intersects $\partial\mc O_{\max}$ at two points $p$ and 
 $$q=g_{\tfrac{t_u}4}[(X^2-Y^2)^2]
 =[e^{t_u}X^4+e^{-t_u}Y^4-2X^2Y^2]
 =[X^4+e^{-2t_u}(Y^4-2e^{t_u}X^2Y^2)]$$
 where $4u=2e^{t_u}$.
 By differentiability of $\partial\O$ at $p$, the interval $[q,p]$ must  cross $\dni\O$ at some point of the form 
 $$[X^4+e^{-2t'_u}(Y^4-4uX^2Y^2)]=[e^{t_u'}X^4+e^{-t'_u}Y^4-4ue^{-t'_u}X^2Y^2]=g_{\tfrac{t_u'}4}[X^4+Y^4-4ue^{-t'_u}X^2Y^2]$$
 with $t_u'\geq t_u$, so $t'_u\to\infty$ if $u\to\infty$.
 This point lies above $r(t'_u)$ at height $ue^{-t'_u}$, which means $u_\O(r(t'_u))e^{t'_u}\geq u\to\infty$ as $u\to\infty$.
 As $u\mapsto t_u'$ is increasing continuous, we get $u_\O(r(t))e^t\to\infty$.
 
 Suppose there is a ray $(r(t))_{t\geq0}\subset\dni\mc C$ ending at $p$ such that $u_\O(r(t))e^t\to\infty$ as $t\to\infty$.
 Using the action of $\SL_2(\R)$ we can assume $p=[X^4]$ and $r(t)=g_{\tfrac t4}[X^4+Y^4]$.
 By definition of $u_\O$, the set $\O$ contains $g_{\tfrac t4}[X^4+Y^4-4uX^2Y^2]=[X^4+e^{-2t}(Y^4-4ue^{t}X^2Y^2)]$ for any $u<u_\O(r(t))$.
 Hence it contains $[X^4+e^{-2t}(Y^4-4vX^2Y^2)]$ for any $v<u_\O(r(t))e^t$.
 Since $u_\O(r(t))e^t\to\infty$, this means for every $v\geq 0$ there is $t_v$ such that  $\O$ contains $[X^4+e^{-2t_v}(Y^4-4vX^2Y^2)]$.
 
 Since $T_{p}\partial\mc O_{\max}=\PP(\langle X^4,X^3Y,X^2Y^2,XY^3\rangle)$, to prove $\partial\O$ is differentiable at $p$, it suffices to show that the image $\O'$ of $\O$ under the stereographic projection 
 $$\pi:[aX^4+bX^3Y+cX^2Y^2+dXY^3+eY^4]\mapsto [bX^3Y+cX^2Y^2+dXY^3+eY^4]$$
 is the whole
 	$\mc A:=\{[bX^3Y+cX^2Y^2+dXY^3+Y^4]:b,c,d\in\R\}.$
	
	Since $\mc C\subset\O$, by Section~\ref{sec:stereo} the convex set $\O'$ contains 
	$$E=\pi(\mc C)=\{[xX^3Y+yX^2Y^2+zXY^3+Y^4]:y\geq \tfrac38z^2\}.$$
	We also saw that $\O'$ contains 
	$$F=\{\pi\left([e^{2t_u}X^4-4uX^2Y^2+Y^4]\right)= [-4uX^2Y^2+Y^4]:u\geq0\}.$$
	The convex hull of these two sets $E$ and $F$ is clearly the whole $\mc A$.
\end{proof}

\section{Construction of invariant convex domains with prescribed height function}

In this section, we construct a $\G$-invariant round convex domain (with $\G\subset\PSL_2(\R)$ noncocompact lattice) whose height function is as small as possible given the constraints we found in the previous section.

For this we are going to make the most naive construction: we will consider the desired height function, take its graph, and take the convex hull.
Moreover, we will use the smoothing procedure in the appendix to ensure we get a round domain.

First we need two technical lemmas.

\subsection{Two lemmas}

In Lemma~\ref{lem functional ineq}, we saw that the height function of a convex domain satisfies a sort of functional inequation: the height at a point $x\in\dni\mc C$ "pulls upward" the height at all points on the same biinfinite geodesic, and the strength of this "pull" decreases with the distance to $x$, with exponential speed.
More precisely, if $x,y\in\dni\mc C$ are on the same biinfinite geodesic of $\dni \mc C$, then $u_\O(y)\geq u_\O(x)e^{- \dmax (x,y)}$.

The goal of the following lemmas is to prove that this is in a coarse sense the strongest pull that is happening: given finitely many points $x_1,\dots,x_k\in\dni\mc C$ and heights $\eta_1,\dots,\eta_k$ at these points, consider the smallest height function $u:\dni\mc C\to\R$ above these heights that parametrises the boundary of a convex domain, then $u(x)\leq C\max_i(\eta_ie^{-\dmax (x,x_i)})$ for some constant $C>0$.

\begin{lemma}\label{lem:phipsi}
	Let $\phi,\psi$ be the linear forms on $\mb R_4[X,Y]$ such that for any $P=aX^4+bX^3Y+cX^2Y^2+dXY^3+eY^4$,
	$$\phi(P)=a+e \quad \text{and} \quad \psi(P)=c.$$
	Then for any $\eps_0<2$, there is a constant $C$ such that for all $0\leq\eps\leq \eps_0$ and $g\in \PSL_2(\R)$, we have
	$$\psi(\rho (g)\cdot(X^4-\eps X^2Y^2 + Y^4)) \geq -C \epsilon \Vert g\Vert^{-4} \phi(\rho (g)\cdot(X^4-\eps X^2Y^2 + Y^4)).$$
\end{lemma}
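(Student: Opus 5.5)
The plan is to reduce the statement to explicit formulas in the entries of $g=\begin{sm}a&b\\c&d\end{sm}$, using $\det g=1$. Fix any norm $\Vert\cdot\Vert$ on $2\times2$ matrices; all norms are equivalent, so the choice only affects the constant $C$. The key decomposition is
$$
X^4-\eps X^2Y^2+Y^4=\Bigl(1-\frac\eps2\Bigr)(X^4+Y^4)+\frac\eps2\,(X^2-Y^2)^2 .
$$
Both coefficients are nonnegative because $0\leq\eps\leq\eps_0<2$. Since $\phi$ and $\psi$ are linear, it suffices to estimate $\phi$ and $\psi$ on $\rho(g)(X^4+Y^4)$ and on $\rho(g)(X^2-Y^2)^2$ separately. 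Here $g$ is lifted to $\SL_2(\R)$; the statement does not depend on the choice of lift because $\rho(-I_2)=\mr{id}$ in degree $4$.

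\textbf{First summand.} By \eqref{irreducible rep}, $\rho(g)(X^4+Y^4)=(aX+cY)^4+(bX+dY)^4$. Its $X^2Y^2$-coefficient is $6(a^2c^2+b^2d^2)\geq0$, so $\psi$ is nonnegative on it. Its value under $\phi$ is $a^4+b^4+c^4+d^4\geq c_0\Vert g\Vert^4$ for some $c_0>0$ depending only on the norm.

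\textbf{Second summand.} We have $\rho(g)(X^2-Y^2)^2=L^2$, where
$$
L=(a^2-b^2)X^2+2(ac-bd)XY+(c^2-d^2)Y^2 .
$$
The $X^2Y^2$-coefficient of $L^2$ is $2(a^2-b^2)(c^2-d^2)+4(ac-bd)^2$. Expanding gives the identity
$$
(a^2-b^2)(c^2-d^2)-(ac-bd)^2=-(ad-bc)^2=-1 .
$$
Substituting, the coefficient becomes $6(ac-bd)^2-2\geq-2$. Also $\phi(L^2)=(a^2-b^2)^2+(c^2-d^2)^2\geq0$.

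\textbf{Combining.} Putting the two summands together gives
$$
\psi(\rho(g)P_\eps)\geq\frac\eps2\cdot(-2)=-\eps
$$
and
$$
\phi(\rho(g)P_\eps)\geq\Bigl(1-\frac{\eps_0}2\Bigr)c_0\Vert g\Vert^4 .
$$
Hence
$$
\psi(\rho(g)P_\eps)\geq-\eps\geq-C\eps\Vert g\Vert^{-4}\phi(\rho(g)P_\eps),\qquad C=\frac{1}{c_0(1-\eps_0/2)} .
$$
This constant is uniform in $\eps\in[0,\eps_0]$ and in $g$, as required. The only step that is not routine is the lower bound on $\psi$ of the second summand. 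A naive estimate would give only $\psi\gtrsim-\eps\Vert g\Vert^4$, and what saves us is the determinant identity above. If one preferred to avoid it, the fallback would be the Cartan decomposition $g=k_1a_tk_2$, checking the bound on $\rho(a_t)$, where the product $(a^2-b^2)(c^2-d^2)$ equals $-1$, and controlling the rotations $k_1,k_2$ separately.
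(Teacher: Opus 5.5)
Your proof is correct and is essentially the paper's argument. The paper computes $\psi(\rho(g)P)=6(a^2c^2+b^2d^2)-6\eps abcd-\eps$ directly using $a^2d^2+b^2c^2=1+2abcd$, then uses $\eps\leq 2$ to bound it below by $3\eps(ac-bd)^2-\eps\geq-\eps$, together with the same bound $\phi(\rho(g)P)\gtrsim\Vert g\Vert^4$; your decomposition $P_\eps=(1-\tfrac\eps2)(X^4+Y^4)+\tfrac\eps2(X^2-Y^2)^2$ is exactly that computation written as a nonnegative combination, which makes both uses of $\eps<2$ more transparent.
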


\begin{proof}
	Let $P=X^4-\eps X^2Y^2 + Y^4$ and $g=\begin{psmallmatrix}a&b\\c&d\end{psmallmatrix}$.
	Then
	$$\phi(\rho(g)P)=a^4+b^4+c^4+d^4-\eps a^2b^2 - \eps c^2d^2.$$
	Since $\eps\leq \eps_0<2$, there exists $C$ (which depends only on $\eps_0$ and the choice of $\Vert\cdot\Vert$) such that $\phi(\rho (g)P)\geq C^{-1}\Vert g\Vert^{4}>0$.
	
	On the other hand, since $ad-bc=1$ and hence $a^2d^2+b^2c^2=1+2abcd$, the following ends the proof
	\begin{align*}
		\psi(\rho (g)P) &= 6(a^2c^2+b^2d^2)-\eps(a^2d^2+b^2c^2+4abcd) \\
		&= 6(a^2c^2+b^2d^2) - 6\eps abcd -\eps \\
		&\geq 3\eps(a^2c^2+b^2d^2-2abcd) -\eps \qquad \text{since} \ \eps\leq 2\\
		&\geq -\eps \qedhere
	\end{align*}
\end{proof}

We are now ready to prove the main lemma.

\begin{lemma}\label{lem:dist cvx combi and C}
For every $\epsilon_0<1/2$, there exists a constant $\mr{Cst}>0$ such that for any $x_1,\dots,x_k\in\mc O_{\max}$ with heights $\eta(x_i)<\epsilon_0$, for any convex combination $x$ of $x_1,\dots,x_k$, denoting by $y,y_1,\dots,y_k\in\mc C$ the respective closest point projections,
	$$\eta(x)\leq \mr{Cst}\cdot\max_i  \left(\eta(x_i)\cdot e^{- \dmax (y,y_i)}\right)$$
\end{lemma}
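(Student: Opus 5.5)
The idea is to turn heights into ratios of the two linear forms $\phi,\psi$ of Lemma~\ref{lem:phipsi}, use linearity of $\phi$ and $\psi$ along the convex combination, and apply Lemma~\ref{lem:phipsi} to each $x_i$ separately.

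\emph{Normalisation.} If $x\in\mc C$ there is nothing to prove, since then $\eta(x)=0$. So assume $x\notin\mc C$. By $\SL_2(\R)$-equivariance of the parametrisation $\dni\mc C\times[0,1/2)\to\mc O_{\max}\smallsetminus\mc O_{\min}$, we may assume $y=[X^4+Y^4]$. Then $x=[X^4+Y^4-4\eta(x)X^2Y^2]$.

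For a polynomial of the form $P_\alpha=X^4+Y^4-4\alpha X^2Y^2$ we have $\phi(P_\alpha)=2$ and $\psi(P_\alpha)=-4\alpha$. Hence, for any representative $\tilde x$ of $x$ in the cone of nonnegative polynomials,
$$\eta(x)=-\frac{\psi(\tilde x)}{2\phi(\tilde x)}.$$

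For each $i$, pick $h_i\in\PSL_2(\R)$ with $y_i=\rho(h_i)[X^4+Y^4]$. Equivariance gives $x_i=\rho(h_i)[P_{\eta(x_i)}]$. Writing the convex combination in the cone, we get $\tilde x=\sum_i\lambda_i\rho(h_i)P_{\eta(x_i)}$ with $\lambda_i\geq0$. Note that $\phi>0$ on every term, and the sign conventions are consistent because all the representatives lie in the nonnegative cone.

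\emph{Applying Lemma~\ref{lem:phipsi}.} Set $\eps=4\eta(x_i)\leq 4\eps_0<2$. Lemma~\ref{lem:phipsi} gives
$$-\psi(\rho(h_i)P_{\eta(x_i)})\leq 4C\,\eta(x_i)\,\Vert h_i\Vert^{-4}\,\phi(\rho(h_i)P_{\eta(x_i)}).$$
Summing with weights $\lambda_i$ and dividing by $2\phi(\tilde x)=2\sum_i\lambda_i\phi(\rho(h_i)P_{\eta(x_i)})$, a weighted average is at most the maximum, so
$$\eta(x)\leq 2C\max_i\eta(x_i)\Vert h_i\Vert^{-4}.$$

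\emph{Comparing $\Vert h_i\Vert^4$ with $e^{\dmax(y,y_i)}$.} It remains to show $e^{\dmax(y,y_i)}\leq C''\Vert h_i\Vert^4$, i.e.
$$\dmax(y,\rho(h)y)\leq \log\Vert h\Vert^4+C'''\quad\text{for all }h\in\PSL_2(\R).$$
I expect this to be the main obstacle, though a standard one. I would use the usual Hilbert-geometry estimate that for a fixed point $y\in\mc O_{\max}$ (here $y\in\dni\mc C\subset\mc O_{\max}$ is indeed interior),
$$\dmax(y,gy)\leq \tfrac12\log\frac{\sigma_1(g)}{\sigma_5(g)}+C.$$
Applied to $g=\rho(h)$, the singular values are $\Vert h\Vert^{4},\dots,\Vert h\Vert^{-4}$ up to constants, so the right-hand side is $\log\Vert h\Vert^4+C$.

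Alternatively, one can argue via the Cartan decomposition $\rho(h)=r_\phi g_tr_\psi$. Since $\PSO(2)$ is compact and acts by isometries, it moves $y$ a bounded amount, so it suffices to bound $\dmax(y,g_ty)$. A direct cross-ratio computation in the plane spanned by $X^4,Y^4,X^2Y^2$ bounds this by $4t+O(1)$, while $\Vert h\Vert\asymp e^{t}$. Combining this with the previous step gives the statement, with $\mr{Cst}$ depending only on $\eps_0$.
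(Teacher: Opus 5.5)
Your proof is correct and follows the paper's argument step by step: normalise $x$ to $[X^4+Y^4-4\eta(x)X^2Y^2]$, read off heights as $-\psi/(2\phi)$, apply Lemma~\ref{lem:phipsi} termwise with $\eps=4\eta(x_i)<2$, and compare $\Vert g_i\Vert^{4}$ with $e^{\dmax(y,y_i)}$ via the Cartan decomposition. Your handling of that last comparison (the $\SO(2)$ factors only move $y$ a bounded amount) is in fact the correct version of the paper's step. The paper instead asserts $\rho(r)y=y$ for $r\in\SO(2)$, which is false for $y=[X^4+Y^4]$: for instance $r_{\pi/4}$ sends it to $[X^4+6X^2Y^2+Y^4]$. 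So only $\dmax(y,y_i)\leq 4\log\Vert g_i\Vert+\mathrm{const}$ holds rather than equality, and this inequality is all that is needed.
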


\begin{proof}
%
	Using the action of $\SL_2(\R)$ (Lemma~\ref{lem: sl2orbits param}) we can assume that $x=[X^4-4\eta(x) X^2Y^2+Y^4]$, and hence $y=[X^4+Y^4]$.
	Similarly, for every $i$ there exists $g_i\in\PSL_2(\R)$ such that $x_i=\rho (g_i)\cdot[X^4-4\eta(x_i) X^2Y^2+Y^4]$ and hence $y_i=\rho(g_i)[X^4+Y^4]$.
	
	Let $r,r'\in \mr{SO}(2)$ such that $g_i=r\begin{sm}
	e^{t}&0\\0&e^{-t}
	\end{sm}r'$.
	Since $\rho(r)y=y=\rho(r')y$, one can check that
	$$
	\dmax (y,y_i)= \dmax (y,\rho\begin{sm}
	e^{t}&0\\0&e^{-t}
	\end{sm}y)=4t=4\log\Vert g_i\Vert,
	$$
	where $\Vert g\Vert=\max_{v\in\R^2\smallsetminus\{0\}}\Vert g(v)\Vert/\Vert v\Vert$ (we take the usual Euclidean norm on $\R^2$).
	
	Thus, to conclude the proof it is enough to check that $\eta(x) \leq \mr{Cst} \cdot\max_i  \left(\eta(x_i)\cdot \Vert g_i\Vert^{-4}\right)$.
	
	Let us use the notation from Lemma~\ref{lem:phipsi}: for any $P=aX^4+bX^3Y+cX^2Y^2+dXY^3+eY^4$,
	$$\phi(P)=a+e \quad \text{and} \quad \psi(P)=c.$$
	Then $\eps=-\psi(X^4-\eps X^2Y^2+Y^4)$ and $2=\phi(X^4-\eps X^2Y^2+Y^4)$.
	
	By assumption and since $\{\phi=2\}$ is an affine chart containing $\mc O_{\max}$, the polynomial $X^4-4\eta(x) X^2Y^2+Y^4$ is a convex combination of 
	$$\frac{2\rho (g_i)\cdot(X^4-4\eta(x_i) X^2Y^2+Y^4)}{\phi\left(\rho (g_i)\cdot(X^4-4\eta(x_i) X^2Y^2+Y^4)\right)},\ 1\leq i\leq k.$$
	Thus $4\eta(x)$ is a convex combination of 
	$$\frac{-2\psi\left(\rho(g_i)\cdot(X^4-4\eta(x_i) X^2Y^2+Y^4)\right)}{\phi\left(\rho (g_i)\cdot(X^4-4\eta(x_i) X^2Y^2+Y^4)\right)},\ 1\leq i\leq k:$$
	Each of these terms is, by Lemma~\ref{lem:phipsi}, bounded above by 
	$8C\cdot \eta(x_i)\Vert g_i\Vert^{-4}$.
	This concludes the proof.
\end{proof}

\subsection{Construction of small $\G$-invariant convex domains}

In this section we apply the previously mentioned naive strategy to construct convex domains with prescribed height function.

The consequence of the lemmas of the previous section is the following.

\begin{lemma}\label{lem cvx hull of graph}
    Let $u:\dni\mc C\to(0,\tfrac14)$ be continuous such that $u(p)\leq Cu(q)e^{\dmax (p,q)}$ for all $p,q\in\dni\mc C$, for some constant $C$.
    Let $\O$ be the interior of the convex hull of the graph of $u$.
    Then there is $C'>0$ (that depends on $C$) such that $u(p)\leq u_\O(p)\leq C'u(p)$ for every $p$.
\end{lemma}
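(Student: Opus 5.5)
The plan is to prove the two inequalities separately. The lower bound is essentially by definition, and the upper bound follows from Lemma~\ref{lem:dist cvx combi and C} combined with the hypothesis on $u$.

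\emph{Lower bound and setup.} For $p\in\dni\mc C$, write $\hat p$ for the point of $\mc O_{\max}\smallsetminus\mc O_{\min}$ above $p$ at height $u(p)$, so that the graph of $u$ is $G=\{\hat p: p\in\dni\mc C\}$. First I will check that $\O$ satisfies the standing hypotheses $\mc C\subset\O\subset\overline\O\smallsetminus\Lambda\subset\mc O_{\max}$, so that $u_\O$ is defined.
\begin{itemize}
\item Each $\hat p$ lies on the segment from $p$ through $\hat p$ towards $\partial\mc O_{\max}$. The whole segment from the corresponding point of $\mc O_{\min}$ side up to $\hat p$ lies in the convex hull, because faces of $\mc C$ have endpoints in $\Lambda=\overline G\smallsetminus G$.
\item Hence the convex hull contains $\mc C$ together with everything below $G$.
\item It is contained in $\mc O_{\max}$: every point of $G$ has height $<1/4<1/2$, and $\overline{\mc C}\subset\overline{\mc O_{\max}}$.
\end{itemize}
This already gives $u_\O\geq u$. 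The bound is in fact a little better, since each $\hat p$ is in $\overline\O$ and $\O$ contains the open region below the graph.

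\emph{Upper bound.} Fix $p$ and let $x\in\partial\O\smallsetminus\Lambda$ be the boundary point above $p$, at height $u_\O(p)$. The key claim is that $x$ is a limit of finite convex combinations of points of $G\cup\Lambda$, and even of points of $G$ alone, since $\Lambda\subset\overline G$. By Carathéodory each such combination uses at most $5$ points. So $x=\lim x^{(n)}$, where $x^{(n)}=\sum_i\lambda^{(n)}_i\hat p^{(n)}_i$.

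Lemma~\ref{lem:dist cvx combi and C} applies with $\eps_0=1/4$, because all $\eta(\hat p_i)=u(p_i)<1/4$. It gives
$$\eta(x^{(n)})\leq \mr{Cst}\,\max_i u(p^{(n)}_i)e^{-\dmax(y^{(n)},p^{(n)}_i)},$$
where $y^{(n)}$ is the projection of $x^{(n)}$. The hypothesis, applied with $q=y^{(n)}$ and $p=p_i^{(n)}$, reads $u(p_i^{(n)})e^{-\dmax(y^{(n)},p_i^{(n)})}\leq C u(y^{(n)})$. Therefore $\eta(x^{(n)})\leq \mr{Cst}\cdot C\, u(y^{(n)})$.

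Passing to the limit, $y^{(n)}\to p$ because the closest-point projection/parametrisation $\dni\mc C\times[0,1/2)\to\mc O_{\max}\smallsetminus\mc O_{\min}$ is a homeomorphism. Continuity of $u$ and $\eta$ then gives $u_\O(p)=\eta(x)\leq C'u(p)$ with $C'=C\cdot\mr{Cst}$.

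\emph{Main obstacle.} The main technical obstacle is the limiting and closure issue: the boundary point $x$ lies in the closure of the convex hull, and we must approximate it by points $x^{(n)}$ lying in $\mc O_{\max}\smallsetminus\mc O_{\min}$ whose projections converge to $p$. Combinations landing inside $\mc O_{\min}$ have to be excluded or handled separately. This is harmless: since $x$ has positive height, any approximating points eventually lie outside $\mc C$, as $\mc C$ is closed in $\mc O_{\max}$.

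A second point to check is that convex combinations involving points of $\Lambda$ with positive weight do not escape. I plan to handle this by approximating each point of $\Lambda$ by points of $G$ far along a ray, which are in $\overline G$. This is legitimate because the $\lambda_i$ are then perturbed only slightly and the combination stays near $x$.
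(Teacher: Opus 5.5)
Your proposal is correct and follows essentially the same route as the paper: approximate the boundary point above $p$ by finite convex combinations of points of the graph (replacing points of $\Lambda$ by nearby graph points), apply Lemma~\ref{lem:dist cvx combi and C}, bound each term $u(p_i)e^{-\dmax(p',p_i)}$ by $Cu(p')$ using the hypothesis, and pass to the limit. Your extra checks, that $\O$ satisfies the standing hypotheses and that the approximants eventually lie outside $\mc C$, supply details the paper leaves implicit.
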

\begin{proof}
    Let $p\in\dni\mc C$ and $x\in\partial\O$ above it.
    The fact that $u(p)\leq u_\O(x)$ is obvious from the definition of height function.
    Let us prove the other inequality.
    
    By definition, $x$ is the convex hull of points in the graph of $u$ or in $\Lambda$.
    Approximating $\Lambda$ by the graph of $u$, we can approximate $x$ with points $x'$ as close as we want such that $x'$ is a convex combination of points $y_1,\dots,y_k$ in the graph of $u$.
    These points lie above points $p_1,\dots,p_k\in\dni\mc C$ at height $u(p_i)$.
    
    By Lemma~\ref{lem:dist cvx combi and C}, we have
	$$\eta(x')\leq \mr{Cst}\cdot\max_i  \left(u(p_i)\cdot e^{-\dmax (p',p_i)}\right)$$
	Where $p'$ is the projection of $x'$ on $\dni\mc C$, which is as close as we want to $p$.
	Since 
	$$u(p_i)e^{- \dmax (p',p_i)}\leq Cu(p')$$
	for each $i$ we get
	$$\eta(x')\leq \mr{Cst}\cdot Cu(p').$$
	Letting $x'$ converge to $x$ we get what we want.
\end{proof}

%
%

Now we apply the above principle to a function of the form $u(x)=e^{\chi(f(x))}$ where $f$ is a depth function and $\chi$ is a 1-Lipschitz function that tends to $-\infty$ as $t\to\infty$.

\begin{proposition}\label{prop:construction of convex sets}
Let $\G\subset\PSL_2(\R)$ a noncocompact lattice, $f:\dni\mc C/\G\to\R$ a depth function and $\chi$ a 1-Lipschitz function  bounded above by $-\log100$ and such that $t+\chi(t)$ tends to $+\infty$ as $t\to\infty$.

Then there is a $\G$-invariant convex round domain $\O$ with $\mc C\subset\O\subset\overline\O\smallsetminus\Lambda\subset\mc O_{\max}$ whose height function  $u_\O:\dni\mc C/\G\to\R$ satisfies
$$
C^{-1}e^{\chi(f(x))}\leq u_{\O}(x)\leq C e^{\chi(f(x))}.
$$
for some constant $C>0$.
\end{proposition}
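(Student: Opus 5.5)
Set $u_0(x)=e^{\chi(f(x))}$ on $\dni\mc C/\G$ and lift it to a $\G$-invariant continuous function on $\dni\mc C$. The plan is to take the interior of the convex hull of its graph, control its height with Lemma~\ref{lem cvx hull of graph}, and then smooth with the appendix procedure.

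\textbf{Admissibility of $u_0$.} First, $u_0\leq 1/100<1/4$ because $\chi\leq-\log 100$. Next we need $u_0(p)\leq C u_0(q)e^{\dmax(p,q)}$. Since $\chi$ is $1$-Lipschitz,
\[
u_0(p)\leq u_0(q)\,e^{|f(p)-f(q)|}.
\]
The depth function $f(x)=\dmax(x,\G x_0)$ is $1$-Lipschitz for $\dmax$, so $|f(p)-f(q)|\leq\dmax(p,q)$. This gives the inequality with $C=1$. Here $\dmax(p,q)$ is the Hilbert distance of $\mc O_{\max}$ between points of $\dni\mc C$, as in Lemma~\ref{lem:dist cvx combi and C}.

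\textbf{Convex hull and its height.} Let $\O_0$ be the interior of the convex hull of the graph of $u_0$. It is $\G$-invariant because the graph is. It contains $\mc C$, since the graph lies above $\dni\mc C$ and its closure contains $\Lambda$. By Lemma~\ref{lem cvx hull of graph}, $u_0\leq u_{\O_0}\leq C'u_0$. We must still check that $\overline{\O_0}\smallsetminus\Lambda\subset\mc O_{\max}$. Lemma~\ref{lem:dist cvx combi and C}, applied with $\eps_0=1/100$, gives $\eta\leq \mr{Cst}/100$ on convex combinations. The only points that could reach $\partial\mc O_{\max}$ are limits of such combinations, which escape to $\Lambda$.

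\textbf{Roundness.} This is the main obstacle. Strict convexity and differentiability away from $\Lambda$ are handled by the appendix smoothing, in the spirit of \cite{CLT18,DGK17}. It yields a $\G$-invariant round $\O$ with $\O_0\subset\O\subset\O_{1}$, where $\O_1$ is the interior of the convex hull of the graph of $2C'u_0$, so the two-sided estimate survives. I expect the appendix to be stated in exactly this form: a round domain squeezed between two invariant convex domains, one compactly inside the other away from $\Lambda$.

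\textbf{Points of $\Lambda$.} Differentiability at $p\in\Lambda$ follows from Lemma~\ref{differentiability}: along a ray $r(t)\to p$ in $\dni\mc C$ we need $u_\O(r(t))e^t\to\infty$.
\begin{itemize}
\item If $p$ is conical, $f(r(t))$ stays bounded on a sequence $t_n\to\infty$, so $u_\O(r(t_n))e^{t_n}\to\infty$. By the Lipschitz property of Lemma~\ref{lem functional ineq} we may instead use a ray that stays at bounded distance from the orbit, and the limit holds along it.
\item If $p$ is parabolic, choose the ray going straight into the cusp. Then $f(r(t))\leq t+O(1)$, so $u_\O(r(t))e^t\geq C^{-1}e^{\chi(f)+f}\to\infty$, using the hypothesis $t+\chi(t)\to\infty$ together with the fact that $\chi(s)+s$ is nondecreasing up to constants, which follows from the $1$-Lipschitz property.
\end{itemize}
Strict convexity at points of $\Lambda$ comes from the extremality of $\Lambda$ in $\overline{\mc C}$ and the fact that $\O\subset\mc O_{\max}$ near $\Lambda$, whose boundary contains no segments through $\Lambda$. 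Together with the previous steps this gives the round domain with the stated two-sided bound.
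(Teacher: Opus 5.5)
Your admissibility check and your use of Lemma~\ref{lem cvx hull of graph} match the paper. The roundness step, however, has a genuine gap, because Lemma~\ref{lem smoothing} is not a ``squeeze'' statement. Its input must be a $\G$-invariant convex set whose boundary is \emph{already differentiable} off $\Lambda$, and whose boundary segments meeting $\Lambda$ are contained in $\Lambda$. Differentiability is what gives the partition of the boundary off $\Lambda$ into exposed faces with finite stabilisers, around which the invariant caps are built. The lemma then only shrinks the set inward, keeping a prescribed closed subset $A$ of the interior, to make the boundary Hessian-convex off $\Lambda$. The convex hull of the graph of the merely continuous function $e^{\chi\circ f}$ has no reason to be $\mc C^1$ (the depth function has kinks), so neither your $\O_0$ nor your $\O_1$ can be fed into that lemma.

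The missing idea is the paper's thickening. Replace each graph point $\alpha(x)$ by a closed Hilbert ball $B_x$ of the round domain $\mc O_R$. Choose its radius $r(x)\le 1$ to be $\G$-invariant, continuous, and small enough that points of $B_x$ have height at most $2u(x)$ and project within distance $1$ of $x$. Let $\O_1$ be the convex hull of $\bigcup_x B_x$. Then:
\begin{itemize}
\item[(i)] $\partial\O_1$ is differentiable off $\overline{\mc C}$ by Fact~\ref{cvx hull of smooth is smooth};
\item[(ii)] $\O_1$ contains $\mc C$, since $\alpha(x)\in\mathrm{int}\,B_x$;
\item[(iii)] $\O_1$ lies in $\mc O_R$;
\item[(iv)] its height profile $v$ satisfies $u\le v\le 2eu\le 1/4$, so Lemma~\ref{lem cvx hull of graph} still gives $u_{\O_1}\le Cu$.
\end{itemize}
Smoothing $\O_1$ with $A=\alpha(\dni\mc C)$ then gives $u\le u_\O\le u_{\O_1}$. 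Relatedly, your constants are uncontrolled: $\mr{Cst}/100$ need not be below $1/2$, and $2C'u_0$ need not be below $1/4$ as Lemma~\ref{lem cvx hull of graph} requires. The paper avoids this by working inside a fixed $\mc O_R$ with the explicit factor $2e$.

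Your argument for strict convexity at $\Lambda$ is also wrong. The boundary $\partial\mc O_{\max}$ does contain nontrivial segments with an endpoint in $\Lambda$, for example $[X^4,X^2Y^2]$ (Lemma~\ref{lem: sl2orbits param2}). Extremality of $\Lambda$ in $\overline{\mc C}$ says nothing about segments of $\partial\O$. The paper instead uses $\O\subset\O_1\subset\mc O_R$ with $\mc O_R$ round. Once the boundary is differentiable at $p\in\Lambda$, its tangent hyperplane at $p$ must be that of $\mc O_R$, so a boundary segment containing $p$ would lie in $\partial\mc O_R$, which is impossible.

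Since this property is a hypothesis of Lemma~\ref{lem smoothing}, the order of steps matters:
\begin{itemize}
\item[(a)] First prove differentiability of $\partial\O_1$ at $\Lambda$ via Lemma~\ref{differentiability}. Your conical/parabolic analysis is fine here; in the conical case use that $t\mapsto u_{\O_1}(r(t))e^t$ is nondecreasing by Lemma~\ref{lem functional ineq}.
\item[(b)] Deduce that $\partial\O_1$ has no boundary segments at $\Lambda$.
\item[(c)] Smooth $\O_1$.
\item[(d)] Rerun both arguments for $\O$, using $u_\O\ge u$ and $\O\subset\mc O_R$.
\end{itemize}
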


Before diving into the proof, let us remark that it is probably possible to construct a round domain that is much closer to $\mc C$, although we do not need this in the present paper.
Indeed the constraint we found in Lemma~\ref{differentiability} only involves the height function above points in cusps lying on a geodesic line that goes straight into the cusp.
To say this differently, let us identify again $\dni\mc C/\G$ with the circle bundle $PT\mb H^2/\G$, and for each $x$ in a cusp of $\mb H^2/\G$, let $\ell_0(x)$ be the line through $x$ that goes directly in the cusp and $\ell_\theta(x)$ the line through $x$ forming an angle $\theta$ with $\ell_0(x)$.
Lemma~\ref{differentiability} says that for $\O$ to be round we need $u_\O(x,\ell_0(x))e^{f(x)}\to\infty$ as $x$ goes to the end of the cusp, where $f(x)$ is a depth function.
But it does not say anything about $u_\O(x,\ell_{\tfrac{\pi}2}(x))e^{f(x)}$, which can probably stay bounded (but cannot go to zero by Proposition~\ref{first lower bound}).

\begin{proof}
Let $u(x)=e^{\chi(f(x))}=e^{\chi( \dmax (x,\G\cdot x_0)}$ for $x\in\dni\mc C$.
Let us check that it satisfies the assumption of Lemma~\ref{lem cvx hull of graph}.
After taking logarithm, it suffices to prove 
that for all $x,y\in\dni\mc C$, we have $\chi(f(x))\leq \chi(f(y))+ \dmax (x,y)$.
Since $\chi$ is 1-Lipschitz we have 
$\chi(f(x))\leq \chi(f(y))+\vert f(x)-f(y)\vert$.
By triangle inequality we have $\vert f(x)-f(y)\vert\leq \dmax (x,y)$, so we can apply Lemma~\ref{lem cvx hull of graph}.

But we do not want to apply it to $u$ directly, as we want our convex to be round.

Since $\chi\leq -\log100$, the graph of $u$ is contained in a uniform neighbourhood $\mc O_R$ of $\mc C$ in $\mc O_{\max}$.
For each $x\in\dni\mc C$ we denote by $\alpha(x)$ the point above it at height $u(x)$.
For any $x\in\dni\mc C$ let $B_x$ be a closed Hilbert ball of $\mc O_R$ around $\alpha(x)$ of radius $r(x)\leq1$, small enough so that:
\begin{itemize}
    \item $x\mapsto r(x)$ is $\G$-invariant and continuous, hence $x\mapsto B_x$ is $\G$-equivariant and continuous for the Hausdorff topology
    \item every point of $B_x$ is at height at most $2u(x)$ and its projection on $\dni\mc C$ is at distance at most 1 from $x$.
\end{itemize}

Let $x\in\dni\mc C$ and $p\in B_x$, with projection $y\in\dni\mc C$.
Then the height of $p$ is at most $2u(x)$, which is bounded above by $2e^1u(y)$ since $y$ is at distance at most 1 from $x$.
Thus, denoting for each $x\in\dni\mc C$ by $v(x)$ the maximum height of a point above $x$ in $\bigcup_yB_y$, we get that $u(x)\leq v(x)\leq 2e^1u(x)\leq \tfrac14$.

Then $v$ obviously also satisfies the assumptions of Lemma~\ref{lem cvx hull of graph}.
Hence the height function of the convex hull $\O_1$ of the graph of $v$ satisfies $u(x)\leq u_{\O_1}(x)\leq C u(x)$ for some constant.
Note that $\O_1\subset\mc O_R$ by construction.

Since $\mc O_R$ is round, the boundary of $B_x$ is differentiable.
Moreover, $B_x$ converge to $\{\xi\}$ if $x$ converge to $\xi\in\Lambda$.
Thus we can apply  Fact~\ref{cvx hull of smooth is smooth} to  the family of balls $(B_x)_{x\in\dni\mc C}$: $\partial\O_1$ is differentiable at any point of $\partial\O_1\smallsetminus\overline{\mc C}$.
For any $x\in\dni\mc C$, the point $\alpha(x)$ is in the interior of $B_x$, and hence in $\O_1$, so the image of $\alpha$ is in $\O_1$, which implies $\mc C\subset\O_1$.
Thus $\partial\O_1$ is differentiable everywhere outside $\Lambda$.

To prove $\partial\O_1$ is differentiable at $\Lambda$ too we want to use Lemma~\ref{differentiability}.
The height function of $\partial\O_1$ is bounded below by $u(x)$.
Let $p\in\Lambda$ and $r(t)$ a ray in $\dni\mc C$ tending to $p$.

If $p$ is conical, then the projection of $r(t)$ in $\dni\mc C/\G$ passes infinitely often in the compact part $K$, so $u(r(t_n))$ is bounded below by a constant for some $t_n\to\infty$, and so is $u_{\O_1}(r(t_n))$.
Moreover $t\mapsto u_{\O_1}(r(t))e^t$ is increasing by Lemma~\ref{lem functional ineq}, so if it tends to infinity for one sequence $t_n$ then it does for all $t\to\infty$.

If $p$ is parabolic, then $r(t)\in H_i$ for some $i$ after some time, hence $u_{\O_1}(r(t))$ is bounded below by 
$e^{\chi(f(r(t)))}$ with $f(r(t))=t+O(1)$ and hence $\chi(f(r(t)))\geq\chi(t)-O(1)$.
Thus $u_{\O_1}(r(t))e^t\to\infty$ as $t\to\infty$ (recall that  $t+\chi(t)\to\infty$).
We conclude that $\partial\O_1$ is $\mc C^1$.

This implies no nontrivial segment of $\partial\O_1$ can touch $\Lambda$, otherwise it would be in the tangent space at a point of $\Lambda$, hence it would be in $\partial\mc O_R$ (recall $\O_1\subset\mc O_R$), which is impossible since $\mc O_R$ is round.
Hence we can apply Lemma~\ref{lem smoothing}: there exists a $\G$-invariant domain $\O\subset\O_1$ which contains $\alpha(x)$ for any $x$ and such that  $\partial\O\smallsetminus\Lambda$ is Hessian-convex.
Moreover $\partial\O$ is still differentiable at $\Lambda$ for the same reasons as $\O_1$, and it is also strictly convex there because $\O\subset\mc O_R$ and $\mc O_R$ is round.
Thus $\O$ is round.

Since $\alpha(x)\in\O$ for any $x\in\dni\mc C$, we have $u(x)\leq u_\O(x)\leq u_{\O_1}(x)\leq Cu(x)$, which concludes the proof.
\end{proof}

\section{Example where $\mc C$ is not Gromov-hyperbolic}

In this short section, we take care of the second half of point (c) of Theorem~\ref{thm:non_implication}, which is the easier half, because it suffices to take any $\O$ whose boundary has points arbitrarily close to $\mc C$ for the Hilbert metric of $\mc O_{\max}$, and then $\mc C$ is automatically not Gromov-hyperbolic for $\O$'s Hilbert metric.
For $\mc C/\G$ to have infinite volume for $\O$'s Hilbert metric this will not work, we will need the height function of $\O$ to tend to $0$ fast enough in the cusps.

Recall that the thinness of a geodesic triangle in a metric space is the infimum of all $\delta>0$ such that each side of the triangle is contained in the $\delta$-neighborhood of the other two.
The thinness of a geodesic metric space is the supremum of the the thinness of all its geodesic triangles.

\begin{lemma}\label{explosion of thinness}
    Let $(T_n)_n$ be a sequence of triangles of $\R\PP^2$ that converge to a triangle $T\subset\R\PP^2$ and $(\O_n)_n$ a sequence of properly convex open sets of $\R\PP^2$ that converge to a properly convex open set $\O$, such that $T_n\subset \O_n$ for any $n$ and one side $s$ of $T$ is contained in $\partial\O$ and is maximal among segments in $\partial\O$, and the opposite vertex is in $\O$.
    
    Then the thinness of $T_n$ in $\O_n$ tends to infinity.
\end{lemma}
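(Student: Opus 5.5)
Label the vertices of $T$ so that $s=[a,b]$ is the side in $\partial\O$ and $c\in\O$ is the opposite vertex, and let $T_n$ have vertices $a_n,b_n,c_n$ converging to $a,b,c$. The plan is to exhibit, for each large $n$, a point $m_n$ on the side $[a_n,b_n]$ whose Hilbert distance in $\O_n$ to $[a_n,c_n]\cup[b_n,c_n]$ tends to infinity. We do not need projective line segments to be the only geodesics: straight segments are geodesics for the Hilbert metric, so the $T_n$ are geodesic triangles and their thinness is at least this distance.

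Choose $m_n\in[a_n,b_n]$ converging to a point $m$ in the relative interior of $s$, for instance the midpoint in a fixed affine chart. Argue by contradiction: suppose that along a subsequence there are points $z_n\in[a_n,c_n]\cup[b_n,c_n]$ with $d_{\O_n}(m_n,z_n)\leq D$. After a further subsequence, $z_n\to z\in[a,c]\cup[b,c]$.

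Two cases arise.

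\emph{Case $z\neq a,b$.} Then $z$ lies in $\O$, or on a side $[a,c)$ or $[b,c)$ away from $\partial\O$. Since $c\in\O$ and $a,b\in\overline\O$, every point of $[a,c)$ or $[b,c)$ is in $\O$, so in all sub-cases $z\in\O$. By convergence $\O_n\to\O$, a compact neighbourhood of $z$ lies in $\O_n$ for large $n$, whereas $m_n\to m\in\partial\O$. The Hilbert distance from a point of a fixed compact set to a sequence of points converging to $\partial\O$ goes to infinity: using the cross-ratio and $\O_n\to\O$, the endpoint of the chord through $z_n,m_n$ beyond $m_n$ converges to a boundary point near $m$. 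This contradicts $d_{\O_n}(m_n,z_n)\leq D$.

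\emph{Case $z=a$ (symmetric for $b$).} Here both $m_n$ and $z_n$ tend to $\partial\O$, at distinct points $m\neq a$ of $s$. This is the main obstacle, and it is where maximality of $s$ enters. The line through $m_n$ and $z_n$ converges to the line through $a$ and $m$, which contains $s$. Let $p_n,q_n$ be the points of $\partial\O_n$ on this line, ordered as $p_n,z_n,m_n,q_n$. Their limits $p,q$ lie in $\partial\O$ on the line of $s$. Since $s$ is maximal among segments of $\partial\O$ and the segment $[p,q]\supset[a,m]$ lies in $\overline\O$ on that supporting line, we get $[p,q]\subset\partial\O$, hence $[p,q]\subset s$ and so $p=a$. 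Then $z_n\to a$ and $p_n\to a$, while $m_n$ and $q_n$ stay separated from $a$ (note $q\neq a$). Therefore the cross-ratio $[p_n,z_n,m_n,q_n]$ blows up, the Hilbert distance tends to infinity, and we again reach a contradiction.

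One subtlety must be handled carefully: the limit of the boundary intersection points is indeed in $\partial\O$ and not inside $\O$. This follows from Hausdorff convergence of the closures, because points of $\O$ lie in $\O_n$ for large $n$, so they cannot be limits of boundary points of $\O_n$. In both cases we have obtained a contradiction, so the thinness of $T_n$ in $\O_n$ tends to infinity.
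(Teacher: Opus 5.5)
Your proof is correct and follows essentially the paper's argument: fix a point in the relative interior of $s$, take points on the other two sides at bounded distance from it, pass to limits of the chord endpoints, and use maximality of $s$ to force a degenerate cross-ratio. The differences are only organizational. The paper first normalizes $T_n=T$ and gets the contradiction in one pass from finiteness of the limiting cross-ratio: this gives $[a,b]\subset\partial\O$, so $y\in\partial\O$ must be an endpoint of $s$. You instead split according to whether the limit point $z$ lies in $\O$ or is an endpoint of $s$.
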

\begin{proof}
    Up to acting by projective transformations we can assume $T_n=T$ for any $n$.
    
    Suppose by contradiction that the thinness stays bounded.
    Let $x$ in the relative interior of $s$.
    For each $n$, let $y_n$ be a closest point to $x$ in the union of the other two sides $s_1\cup s_2$.
    Then $d_{\O_n}(x,y_n)$ is bounded above by some constant.
    Let $a_n,b_n\in\partial\O_n$ such that $a_n,x,y_n,b_n$ are aligned in this order.
    Up to extracting a subsequence we may assume $a_n\to a\in\partial\O$, $b_n\to b\in\partial\O$ and $y_n\to y\in s_1\cup s_2$.
    Note that that $x\neq y$.
    Then the crossratio of $a_n,x,y_n,b_n$ converge to the crossratio of $a,x,y,b$, which is therefore finite.
    So $a\neq x$ and $b\neq y$.
    Since $a,x,b\in\partial\O$, we deduce that the whole $[a,b]$ is in $\partial\O$, including $y$.
    By maximality of $s$ among segments of $\partial\O$, we have $[a,b]\subset s$.
    Since the vertex $v$ opposite to $s$ is in $\O$, the union of the two other sides $s_1\cup s_2$ intersects $\partial\O$ at their endpoint other than $v$, which are endpoint of $s$.
    Since $y\in (s_1\cup s_2)\cap\partial\O$, it is endpoint of $s$ which contains $[a,b]$, so we must have $y=b$ which is a contradiction.
\end{proof}

\begin{proposition}\label{prop not hyperbolic}
  Let $\mc C\subset\O\subset \mc O_{\max}$ be a convex domain whose height function is not bounded below.
  Then $\mc C$ endowed with the Hilbert metric of $\O$ is not Gromov-hyperbolic.
\end{proposition}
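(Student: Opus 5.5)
We will apply Lemma~\ref{explosion of thinness} after renormalising by the $\SL_2(\R)$-action. Since the height function $u_\O$ is not bounded below, there are points $x_n\in\dni\mc C$ with $u_\O(x_n)\to0$. By Lemma~\ref{lem: sl2orbits param}, $\PSL_2(\R)$ acts transitively on $\dni\mc C$, so there are $g_n\in\PSL_2(\R)$ with $\rho(g_n)x_n=[X^4+Y^4]$. Set $\O_n=\rho(g_n)\O$. Each $\O_n$ is a convex domain with $\mc C\subset\O_n\subset\mc O_{\max}$, because $\mc C$ and $\mc O_{\max}$ are $\SL_2(\R)$-invariant. The height of $\partial\O_n$ above $[X^4+Y^4]$ equals $u_\O(x_n)\to0$, since the height parametrisation is equivariant. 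The map $\rho(g_n)$ is an isometry from $(\mc C,d_\O)$ to $(\mc C,d_{\O_n})$. So it suffices to find geodesic triangles in $\mc C$ whose thinness for $d_{\O_n}$ tends to infinity.

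We restrict to the projective plane $P$ spanned by $X^4$, $Y^4$ and $(X^2+Y^2)^2$. This plane contains the face $[X^4,Y^4]$ and the point $X^4+Y^4-4\alpha X^2Y^2$ for every $\alpha$, since $X^2Y^2$ lies in the span. Take the fixed triangle $T$ with vertices $X^4$, $Y^4$ and $v=(X^2+Y^2)^2\in\mc O_{\min}$, and let $T_n$ be a slightly shrunken version converging to $T$, with vertices inside $\mc C$ close to $X^4$, $Y^4$ and equal to $v$. The sides of $T_n$ are projective segments, and these are geodesics for any Hilbert metric. They also lie in $\mc C$ by convexity, so thinness of $T_n$ in $\mc C\cap P$ is at least as large as what Lemma~\ref{explosion of thinness} controls. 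A little care is needed, because thinness inside $\mc C$ for $d_{\O_n}$ versus $d_{\O_n\cap P}$: the restriction of a Hilbert metric to a plane section equals the Hilbert metric of the section. Distances measured inside $P$ therefore agree with those in $\O_n$. The thinness used in the Lemma's proof is only a lower bound on distances from a point of $s$ to the other sides, and this is intrinsic.

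Next we take the limit. By Blaschke selection, after extracting a subsequence, $\O_n\cap P$ converges to a properly convex open set $\O_\infty\subset\mc O_{\max}\cap P$ that contains $\mc C\cap P$. Its boundary contains $[X^4,Y^4]$, for two reasons. The boundary points of $\O_n\cap P$ above $[X^4+Y^4]$, at height tending to $0$, converge to $X^4+Y^4$. Also $X^4$ and $Y^4$ lie in $\Lambda\subset\partial\mc O_{\max}$, hence in $\partial\O_\infty$, and convexity then forces the whole segment into $\partial\O_\infty$. The segment $s=[X^4,Y^4]$ is maximal among segments of $\partial\O_\infty$: any longer segment through $X^4$ in $P$ would leave $\overline{\mc O}_{\max}$, since $X^4,Y^4$ are extremal points of $\overline{\mc O}_{\max}$ (they lie in $\Lambda$, and the supporting hyperplane $\PP(x_5=0)$ meets $\overline{\mc O}_{\max}$ only at $X^4$). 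The opposite vertex $v\in\mc O_{\min}\subset\O_\infty$ lies in the interior. Lemma~\ref{explosion of thinness} then says the thinness of $T_n$ in $\O_n\cap P$ tends to infinity.

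Hence $(\mc C,d_\O)$ contains geodesic triangles of unbounded thinness and is not Gromov-hyperbolic. The main obstacle is the limiting step: showing $s\subset\partial\O_\infty$, and checking that $\O_\infty$ is properly convex and open, i.e.\ nondegenerate. Openness holds because $\O_\infty\supset\mc C\cap P$, which has nonempty interior in $P$. Extremality of $X^4$ and $Y^4$ in $\overline{\mc O}_{\max}$ needs a quick direct check, namely that the polynomial $X^4$ is an extreme ray of the cone of nonnegative quartics.
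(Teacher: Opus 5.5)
Your proof is correct and follows essentially the paper's route: renormalise by $\rho(\PSL_2(\R))$ so that the small height sits above $[X^4+Y^4]$, cut by the plane spanned by $X^4,Y^4,X^2Y^2$, extract a limit, and apply Lemma~\ref{explosion of thinness}. The one difference is how you get $[X^4,Y^4]\subset\partial\O_\infty$.

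\begin{itemize}
\item The paper ties the vertices $g_{\pm t_n/4}[X^4+Y^4]$ to $u_n$. It then uses Lemma~\ref{lem functional ineq} to bound the height by $\sqrt{u_n}$ along a long stretch of the segment.
\item You use only the single boundary point above $[X^4+Y^4]$, together with $X^4,Y^4\in\partial\O_\infty$. A convex open set that misses one point of an open chord whose endpoints lie in its closure misses the whole chord.
\end{itemize}

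Your version is valid and slightly cleaner, and it frees the choice of $T_n$.

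One justification is false, although the claim it supports is true. The hyperplane $\PP(x_5=0)$ does not meet $\overline{\mc O}_{\max}$ only at $X^4$. It contains every $X^2Q$ with $Q\geq 0$ a binary quadratic form, for instance the segment $[X^4,X^2Y^2]\subset\partial\mc O_{\max}$. In fact it is the only supporting hyperplane at $X^4$, so $X^4$ is extremal but not exposed. Also, $\Lambda$ being the set of extremal points of $\overline{\mc C}$ says nothing about $\overline{\mc O}_{\max}$.

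The maximality of $s$ follows more simply from $(X^4,Y^4)\subset\mc O_{\max}$ together with $X^4,Y^4\in\partial\mc O_{\max}$. The line through $X^4$ and $Y^4$ then meets $\overline{\mc O}_{\max}\supset\overline{\O_\infty}$ in exactly $[X^4,Y^4]$. This is what the paper means by ``because $\O'$ is contained in $\Pi\cap\mc O_{\max}$''.

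If you prefer extremality, it is a two-line check:
\begin{itemize}
\item Suppose $X^4=P_1+P_2$ with $P_i\geq 0$. Then $P_i(0,1)=0$, and nonnegativity forces $X^2\mid P_i$.
\item Repeating the argument on $P_i/X^2$ gives $P_i\in\R_{\geq 0}X^4$.
\end{itemize}
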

\begin{proof}
    Let $x_n\in\dni\mc C$ such that $u_n=u_\O(x_n)\to0$.
    Let $h_n\in\rho(\PSL_2(\R))$ such that $h_nx_n=[X^4+Y^4]$.

    Let $p=[(X^2+Y^2)^2]\in\mc O_{\min}$ and $T_n\subset\mc C$ the triangle with vertices  $p$, $a_n=g_{t_n/4}[X^4+Y^4]$ and $b_n=g_{-t_n/4}[X^4+Y^4]$ where $t_n=-\tfrac12\log u_n\to\infty$.
    Note that $T_n$ converges to the triangle $T$ with vertice $p$, $a=[X^4]$ and $b=[Y^4]$.
    Let us show the thinness of $h_n^{-1}T_n$ in $\O$ tends to infinity.
    
    This thinness is equal to the thinness of $T_n$ in $h_n\O$.
    We have $u_{h_n\O}([X^4+Y^4])=u_n$ and by Lemma~\ref{lem functional ineq} we have
    $$
    u_{h_n\O}([e^tX^4+e^{-t}Y^4])\leq e^{\vert t\vert}u_n \leq \sqrt{u_n}
    $$
    for any $t\in[-t_n,t_n]$.
    Thus the point of $\partial h_n\O$ above $[e^tX^4+e^{-t}Y^4]$ is between $[e^tX^4+e^{-t}Y^4]$ and $[e^tX^4+e^{-t}Y^4-4\sqrt{u_n}X^2Y^2]$, which are at distance at most $C\sqrt{u_n}$ for any fixed Riemannian metric on $\R\PP^4$.

    Let $\Pi$ be the projective plane spanned by $X^4,Y^4,X^2Y^2$, and $\O_n=(h_n\O)\cap \Pi$, which converges after taking a subsequence, to a $\O'$ that contains $[X^4,Y^4]$ in its boundary since by the above the distance from $[e^tX^4+e^{-t}Y^4]$ to $\partial h_n\O$ tends to 0 for any fixed Riemannian metric on $\R\PP^4$.
    Moreover, $[X^4,Y^4]$ is maximal among segments of $\partial\O'$ because $\O'$ is contained in $\Pi\cap\mc O_{\max}$.
    Thus we can apply Lemma~\ref{explosion of thinness} to get that the thinness of $T_n$ in $\O_n$ tends to infinity.
\end{proof}

\section{Example where the convex core has infinite volume}\label{sec infinite volume}

\subsection{Hilbert volumes}

Given a properly convex open set $\O$ of a real projective space, there are several natural $\Aut(\O)$-invariant locally finite measures one can put on it.
We put the $\dim(\O)$-dimensional Hausdorff measure.
In \cite[p.\,3]{Vernicosasymp} one can find more details about Hilbert volumes and other examples of natural measures, which are all comparable because of Benz{\'e}cri's compacness theorem, which we now recall.

\begin{fact}[Benz{\'e}cri's compacness theorem  {\cite[Th.\,2\ p.\,309]{benz_varlocproj}}]\label{benz}
    $\SL_n(\R)$ acts properly and cocompactly on the space of pointed properly convex open subsets of $\R\PP^{n-1}$, endowed with the Hausdorff topology.
\end{fact}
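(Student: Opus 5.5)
Benzécri's theorem asserts that $\SL_n(\R)$ acts properly and cocompactly on the space $\mathcal{X}$ of pairs $(\O,x)$, with $\O$ a properly convex open subset of $\R\PP^{n-1}$ and $x\in\O$, for the Hausdorff topology on closures. I would reduce it to one compact normalisation set. For each $(\O,x)$, lift $x$ to a vector $v$ and pick an open cone $\tilde\O$ over $\O$. Let $\tilde\O^*$ be the dual cone. The Vinberg characteristic function $\varphi(w)=\int_{\tilde\O^*}e^{-\langle \xi,w\rangle}d\xi$ is strictly convex and $\SL$-equivariant, so it has a canonical center of mass. It also gives an ellipsoid, namely the Hessian of $\log\varphi$ at $v$, which is a positive definite quadratic form $Q_{(\O,x)}$ depending equivariantly on $(\O,x)$. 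I use $g\in\SL_n(\R)$ to send this form to a multiple of the standard form, and to send $x$ to $[e_n]$.

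\textbf{Cocompactness.} The normalised pairs, namely those with $x=[e_n]$ and $Q$ conformal to the standard form at $e_n$, should form a compact set $K\subset\mathcal{X}$. To prove this, I would show that the Hessian ellipsoid is comparable to $\O$ up to universal constants depending only on $n$. Concretely, there is $c_n>0$ such that the Hessian unit ball $E$ at $x$, viewed in the affine chart $\{\xi_0=1\}$ where $\xi_0$ is the center of mass of $\tilde\O^*$ (or rather its suitable dual), satisfies $E\subset \O\subset c_nE$ around $x$. This is a John-type ellipsoid inequality for the log-characteristic function. If this route is awkward, I would use the John ellipsoid of $\O$ in the affine chart given by the dual barycenter (Santaló point) relative to $x$. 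That chart makes $x$ the barycenter-type center of the convex body, and John's theorem then gives $E\subset\O\subset nE$. After normalising $E$ to the unit ball, $\overline\O$ is squeezed between two fixed balls. The space of convex bodies trapped between two fixed balls is compact for the Hausdorff topology by Blaschke selection. Limits remain properly convex and open with $x$ inside, because they still contain the inner ball and lie in the outer one.

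\textbf{Properness.} Suppose $g_k(\O_k,x_k)\to(\O,x)$ with $(\O_k,x_k)\to(\O',x')$. After the normalisation above, both sequences eventually lie in a slightly enlarged compact set $K'$ whose convex sets sit between balls of radii $r<R$ in fixed charts. So $g_k$ maps a set containing a fixed ball around $x_k$ into a set contained in a fixed ball, in the chart where $x_k$ maps near $x$. I would then compare $g_k$ with the normalising map of $(\O_k,x_k)$ and with that of its image. Their composition sends $B_r$ into $B_{R'}$ and fixes the distinguished point. Such affine maps, and their inverses by the symmetric argument, form a compact subset of $\SL_n(\R)$. Hence $(g_k)$ is bounded and subconverges.

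\textbf{Main obstacle.} The hardest step is choosing a normalisation that is well defined, continuous in the Hausdorff topology, and equivariant. The affine chart must genuinely depend on $(\O,x)$: the natural choice is the chart dual to the barycenter of the dual domain $\O^*_x$ in the chart where $x$ sits at the origin. I would prove continuity of this barycenter map in the Hausdorff topology, which follows since dual domains vary continuously when $x$ stays uniformly inside, and then apply John's theorem in that chart.
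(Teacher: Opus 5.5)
The paper gives no proof of this statement. It quotes Benz\'ecri's theorem and uses it only through Corollary~\ref{volume of balls} and the uniform bound on $\eta$-separated sets in the proof of Proposition~\ref{prop:Hilbert volume and subspaces}. Your proposal reconstructs the classical normalisation proof: a canonical affine chart attached to $(\Omega,x)$, a John ellipsoid, Blaschke selection for cocompactness, and a norm bound on the conjugated map for properness. The route you actually commit to, the chart given by the dual barycentre, works. Compared with the citation, it gives explicit dimensional constants, and hence explicit constants in Corollary~\ref{volume of balls}; the citation gives brevity, which is all the paper needs.

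Several steps should be stated more carefully.

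\textbf{The Hessian route.} Do not rely on the Hessian of $\log\varphi$. The comparison $E\subset\Omega\subset c_nE$ for that ellipsoid is usually derived from Benz\'ecri's theorem itself. It is therefore circular unless you import self-concordance of the universal barrier (Nesterov--Nemirovski, G\"uler).

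\textbf{What your chart makes $x$.} Let $K$ be $\Omega$ in an auxiliary chart with $x$ at the origin, and let $b$ be the barycentre of $K^\circ$. In your chart, $\Omega$ becomes $\{y/(1-\langle b,y\rangle):y\in K\}$, whose polar body about $x$ is $K^\circ-b$. So $x$ is the Santal\'o point of $\Omega$, not its barycentre. The input you need is Minkowski--Radon for this polar body, whose barycentre is now $0$. Polarising then gives $-(\Omega-x)\subset d(\Omega-x)$ with $d=n-1$.

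\textbf{Centring the ellipsoid.} For a nonsymmetric body, John's theorem does not give an ellipsoid centred at $x$ with $\Omega\subset nE$. Apply it instead to the symmetric body $\Omega\cap(2x-\Omega)$, which contains $x+\frac1d(\Omega-x)$. This gives $E\subset\Omega\subset x+d^{3/2}(E-x)$ with $E$ centred at $x$.

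\textbf{Cocompactness.} With this, cocompactness needs only the existence of the normalisation, not its continuity.

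\textbf{Properness.} The map $h=N(g\Omega,gx)\,g\,N(\Omega,x)^{-1}$ is linear on the fixed chart because the chart is equivariant: $H(g\Omega,gx)=gH(\Omega,x)$, since barycentres commute with linear maps. This gives $\Vert h\Vert,\Vert h^{-1}\Vert\leq d^{3/2}$ for every $g$. Boundedness of $(g_k)$ then follows from local boundedness of the normalising maps. That is exactly where you need Hausdorff continuity of the dual slice and of its barycentre.

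\textbf{Even $n$.} For $n$ even, the normalising maps lie in $\PGL_n(\R)$, in which the image of $\SL_n(\R)$ has index two. Compose with a reflection of the unit ball to land in that image.
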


This implies that all balls of a given radius in all properly convex open sets have roughly the same volume.

\begin{corollary}\label{volume of balls}
 For all $n$ and $R>0$ there is a constant $C>0$ such that for any properly convex open set $\O$ and any $x\in\O$ we have
 $$
 C^{-1}\leq \Vol(B_\O(x,R))\leq C
 $$
\end{corollary}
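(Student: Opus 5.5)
The plan is to use Benzécri's compactness theorem (Fact~\ref{benz}) together with the continuity of the Hausdorff measure of Hilbert balls under Hausdorff convergence of pointed convex sets. Consider the function
$$
V:(\O,x)\longmapsto \Vol_\O(B_\O(x,R))
$$
on the space $\mathcal X$ of pointed properly convex open subsets of $\R\PP^{n-1}$. It is $\SL_n(\R)$-invariant, since projective maps are isometries for the Hilbert metrics and hence preserve Hausdorff measure. By Fact~\ref{benz}, there is a compact set $K\subset\mathcal X$ whose $\SL_n(\R)$-translates cover $\mathcal X$. So it suffices to show that $V$ is bounded above and below by positive constants on $K$.

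For this I will use a local comparison argument rather than full continuity. Fix $(\O_0,x_0)\in K$. The Hilbert metric of $\O$ is Finsler with norm
$$
F_\O(p,v)=\tfrac12\Big(\tfrac1{|p-a|}+\tfrac1{|p-b|}\Big)|v|,
$$
written in an affine chart, where $a,b$ are the boundary points on the line through $p$ in direction $v$. Near $(\O_0,x_0)$, take $(\O,x)$ Hausdorff-close. Then $\overline{B_{\O_0}(x_0,2R)}$ is a compact subset of $\O_0$, and for $\O$ close enough it sits uniformly inside $\O$. On that compact set the distances to $\partial\O$ and to $\partial\O_0$ along any line are uniformly comparable, so $F_\O$ and $F_{\O_0}$ are bi-Lipschitz comparable with a constant close to $1$. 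The same comparison with the Euclidean norm of the chart holds, with constants depending only on the compact neighbourhood. Hence $B_\O(x,R)$ contains a Euclidean ball of radius $c>0$ and is contained in a Euclidean compact set of bounded size. The Hausdorff $(n-1)$-measure for a metric that is $L$-bi-Lipschitz to the Euclidean one is within a factor $L^{n-1}$ of Lebesgue measure. This gives upper and lower bounds on a neighbourhood of each point of $K$, and compactness of $K$ yields a uniform $C$.

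The main obstacle is the uniform containment. We need the Hilbert ball of radius $R$ (or $2R$) to stay in a fixed compact region bounded away from the boundary as $(\O,x)$ varies near $(\O_0,x_0)$. I would handle this with the cross-ratio formula, which gives
$$
d_\O(x,p)\geq \tfrac12\log\frac{|x-b|}{|p-b|}
$$
up to chart constants, where $b$ is the boundary point beyond $p$. Thus points at Hilbert distance at most $R$ from $x$ satisfy $|p-b|\geq e^{-2R}|x-b|$, and $|x-b|$ is bounded below uniformly by Hausdorff closeness. Once this containment is established, the Finsler comparison is routine.
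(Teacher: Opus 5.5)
Your proposal is correct and follows the paper's route: the paper deduces the corollary in one line from Benz{\'e}cri's compactness theorem, and your local comparison of the Hilbert metric with the Euclidean metric near each point of a compact fundamental set spells out the local uniformity that the paper leaves implicit. The one step you state too quickly is passing from $|p-b|\geq e^{-2R}|x-b|$ to a uniform Euclidean distance from $\partial\O$, but it follows directly from convexity: the inequality gives $B_\O(x,R)\subset x+(1-e^{-2R})(\overline\O-x)$, and this set lies at distance at least $e^{-2R}r$ from $\partial\O$ whenever the Euclidean ball of radius $r$ about $x$ is contained in $\O$.
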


\subsection{Technical estimates on Hilbert volumes}

The goal of this section is to establish the following  lemma.
We will apply it to $\O=\mc O_{\min}$.

\begin{lemma}\label{lem:volume estimate}
 Let $\Omega$ be a properly convex open set of dimension $n$, and fix $o\in\Omega$ and an affine chart containing $\Omega$.
 Fix an open subset $U\subset \dO$ and denote by $U'\subset\O$ the set of rays from $o$ to $U$.
 Suppose $\dO$ is $\mc C^2$ at $U$, and that the Hessian has rank at least $d$ there.
 Then there exists $C>0$ such that for any $\varepsilon>0$,
 \begin{equation*}
  \Vol_{\O_\eps}(U') \geq C^{-1} \tfrac1{\eps^{d/2}},
 \end{equation*}
 where $\O_\varepsilon$ is the $\varepsilon$-neighborhood of $\O$ in the affine chart.
\end{lemma}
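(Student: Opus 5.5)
The plan is a packing argument. We place about $\eps^{-d/2}$ points inside $U'$, close to $\dO$, whose Hilbert balls of radius $1$ in $\O_\eps$ are pairwise disjoint and contained in $U'$. Corollary~\ref{volume of balls} says each such ball has $\Vol_{\O_\eps}$-volume at least a uniform constant, so summing gives the bound. Throughout we may shrink $U$ to a relatively compact open subset $V\subset\overline V\subset U$ and work uniformly over $\overline V$; the constants below depend only on $\overline V$, $o$ and the $\mc C^2$ data of $\dO$ there.

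\textbf{Local model.} At each $\xi\in\overline V$, use affine coordinates $(s,t,w)\in\R\times\R^d\times\R^{n-1-d}$, with $s$ the inner normal direction and $t$ spanning a $d$-dimensional subspace on which the Hessian is positive definite. Since $\dO$ is $\mc C^2$ with Hessian of rank at least $d$ on $\overline V$, compactness gives uniform constants $\kappa,r_0>0$ with the following property. Near $\xi$, the domain $\O$ lies in $\{s\geq \kappa|t|^2 - \kappa^{-1}|w|^2\}$. Since $\O_\eps$ is the $\eps$-neighbourhood, it lies near $\xi$ in $\{s\geq \kappa|t|^2-\kappa^{-1}|w|^2-\eps\}$ (up to constants).

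\textbf{Choice of points.} For small $\eps$, take a maximal family of points $\xi_1,\dots,\xi_N\in V$ that are $M\sqrt\eps$-separated in the $t$-directions. Concretely, take a $M\sqrt\eps$-net of $V$ for the projection to a $d$-dimensional transverse parametrisation, fibred over a fixed finite set in the $w$-directions. Here $M$ is a large constant fixed later. Then $N\geq c\,\eps^{-d/2}$. Let $x_i$ be the point on the ray from $o$ to $\xi_i$ at affine distance $\eps$ from $\xi_i$. Then $x_i$ lies at affine distance comparable to $\eps$ from $\partial\O_\eps$.

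\textbf{Confinement of balls.} Next we show that the Hilbert ball $B_{\O_\eps}(x_i,1)$ is contained in an affine box around $x_i$ of size $O(\eps)$ in $s$ and $O(\sqrt\eps)$ in $t$. Take $y$ in the ball and look at the chord of $\O_\eps$ through $x_i$ and $y$. Because $x_i$ has depth $\sim\eps$, the parabolic model shows two things. The part of this chord lying on the "outer" side of $x_i$ has length $O(\eps)$ in the normal direction. Its displacement in the $t$-directions is $O(\sqrt\eps)$ before it exits $\{s\geq\kappa|t|^2-\eps\}$. A Hilbert distance at most $1$ forces $y$ to stay within a bounded proportion of the chord on that side, which gives the claimed box. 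The $w$-direction is controlled only at scale $O(1)$, which is harmless because the net is coarse there.

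Consequently, for $M$ large, the balls around distinct $x_i$ are disjoint, since their $t$-projections differ by at least $M\sqrt\eps$. They are also contained in $U'$, because $V$ is compactly inside $U$ and the balls have affine diameter $O(\sqrt\eps)+O(1)\cdot(\text{$w$-size})$. We arrange this by taking the $w$-net inside a small enough region. Finally, Corollary~\ref{volume of balls} gives
\[
\Vol_{\O_\eps}(U')\geq \sum_i\Vol_{\O_\eps}(B_{\O_\eps}(x_i,1))\geq N C_0^{-1}\geq C^{-1}\eps^{-d/2}.
\]
Large $\eps$ are handled trivially by monotonicity and a fixed ball.

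\textbf{Main obstacle.} The step I expect to be hardest is the confinement of the Hilbert balls in the $t$-directions, uniformly in $\xi\in\overline V$ and $\eps$. This requires turning "Hessian of rank $\geq d$" into a two-sided local parabolic comparison for $\O_\eps$ along chords that are not straight normal lines. I would first try to prove it by contradiction and rescaling. Apply the affine map $(s,t,w)\mapsto(s/\eps,t/\sqrt\eps,w)$, under which $(\O_\eps,x_i)$ converges, up to subsequences, to a domain containing a paraboloid-type region with $x_i\mapsto$ a point at depth $1$. This is in the spirit of Benz\'ecri compactness (Fact~\ref{benz}). Hilbert balls of radius $1$ are bounded in the $(s,t)$-coordinates of the limit, and this gives the uniform box by continuity of the Hilbert metric under Hausdorff convergence.
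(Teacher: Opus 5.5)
The whole proof rests on the confinement step, and what you write does not establish it. The inclusion $\O_\eps\subset\{s\ge\kappa|t|^2-\kappa^{-1}|w|^2-\eps\}$ puts no constraint on lines through $x_i$ with direction $(0,v_t,v_w)$, $\kappa|v_t|\le|v_w|$. Along such lines $\kappa|t|^2-\kappa^{-1}|w|^2\le 0$, so they never leave your model region, and nothing prevents the ball from travelling an affine distance of order one along them, i.e.\ a $t$-distance of order $|v_t|$.

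This really happens under your hypotheses. For the cylinder $\{s>(t-w)^2\}$ the Hessian is positive on the $t$-axis, and your model holds with $\kappa=\tfrac12$, since $(t-w)^2-\tfrac12t^2+2w^2=\tfrac12(t-2w)^2+w^2\ge 0$. Yet the ball of radius $1$ about $(\eps,0,0)$ contains the points $(\eps,\mu,\mu)$ for $|\mu|$ up to a fixed fraction of the ruling length, so its $t$-extent is of order one. A lower bound of the form $\kappa|t|^2-\kappa^{-1}|w|^2$ cannot see which directions of $\dO$ are flat. Even with $t$ chosen orthogonal to the kernel of the Hessian at $\xi_i$, how far the ball spreads along degenerate directions depends on $\dO$ away from $\xi_i$, which no pointwise second-order bound controls.

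The rescaling fallback fails for the same reason. For $\dO=\{s=t^2+w^4\}$, the images of $\O_\eps$ under $(s,t,w)\mapsto(s/\eps,t/\sqrt\eps,w)$ are, near the origin, approximately $\{s'\ge t'^2-1+w^4/\eps\}$, and these collapse onto $\{w=0\}$. So there is no properly convex limit on which to invoke continuity of the Hilbert metric. The right normalisation in $w$ depends on the unknown degenerate behaviour of $\dO$, and appealing to Benz\'ecri only restates the claim.

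Independently, the balls of radius $1$ are not contained in $U'$, nor even in $\O$. Along the normal through $x_i$, which is at distance $\delta\le\eps$ from $\dO$ and $\delta+\eps$ from $\partial\O_\eps$, the foot point on $\dO$ is at $\O_\eps$-distance about $\tfrac12\log\tfrac{\delta+\eps}{\eps}\le\tfrac12\log 2<1$ from $x_i$. Along flat directions the ball also covers a fixed fraction of the chord, which no choice of centres can shrink.

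What is missing is the paper's first reduction, and with it your packing does go through. Choose a $(d+1)$-dimensional subspace $\PP(W)\ni o$ such that $\dO\cap\PP(W)$ is $\mc C^2$ with positive definite Hessian on a compact patch $V\subset U$, and place your $M\sqrt\eps$-separated points $\xi_i$ in $V$. The $x_i$ then lie in $\PP(W)$, since the rays from $o$ do. The chord of $\O_\eps$ through two points of $\PP(W)$ lies in $\PP(W)$, so $d_{\O_\eps}(x_i,x_j)=d_{\O_\eps\cap\PP(W)}(x_i,x_j)$. Moreover $\O_\eps\cap\PP(W)$ is contained in a paraboloid $\{s>\kappa|t|^2-C\eps\}$ adapted to $\xi_i$ (projectively a ball, constants uniform in $i$). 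There your anisotropic rescaling is legitimate and yields $d_{\O_\eps}(x_i,x_j)\ge 2r$ once $M$ is large. The ambient balls $B_{\O_\eps}(x_i,r)$ are then pairwise disjoint by the triangle inequality, whatever their shape, and for $r$ small they lie in $U'$; Corollary~\ref{volume of balls} holds for every fixed radius.

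Repaired this way, your argument is a genuinely different and shorter route than the paper's. After slicing, the paper compares $\Vol_{\O_\eps}(U')$ with Hilbert balls of $\O$ (Lemma~\ref{lem:eps nbhd and balls}), passes to the slice by a bounded-overlap packing (Proposition~\ref{prop:Hilbert volume and subspaces}), and gets the growth $e^{dR}$ from Colbois--Verovic (Theorem~\ref{thm:Colbois Verovic}).

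Finally, large $\eps$ are not trivial. As $\eps\to\infty$, $\Vol_{\O_\eps}(U')$ decays like $\eps^{-n}$, faster than $\eps^{-d/2}$, so the inequality can only hold for bounded $\eps$. That is also all the paper's proof gives and all that is used later.
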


The proof is decomposed into three steps.
First we show this volume can be approximated by volumes in $\O$ of Hilbert balls intersected with $U'$, where the radius is $\tfrac12\log\tfrac1\epsilon$.
Then we show that these volumes can be bounded below by volumes in a $d+1$-dimensional section $\PP(W)\cap\O$ of $\O$ so that in this section the boundary has positive Hessian.
Finally we use a result of Colbois--Verovic that gives estimates of volumes of Hilbert balls in convex sets whose boundary has positive Hessian.

\begin{lemma}\label{lem:eps nbhd and balls}
 Let $\Omega$ be a properly convex open set, and fix $o\in\Omega$ and an affine chart containing $\Omega$.
 Fix an open subset $U\subset \dO$ and denote by $U'\subset\O$ the set of rays from $o$ to $U$.
 Then there exists $C>0$ such that for any $\varepsilon>0$ small enough,
 \begin{equation*}
  \Vol_{\O}\left(U'\cap B_\O \left(o,\tfrac12\log(\tfrac1\eps) - C\right)\right) \leq \Vol_{\O_\eps}(U') \leq \Vol_{\O}\left(U'\cap B_\O \left(o,\tfrac12\log(\tfrac1\eps) + C\right)\right),
 \end{equation*}
 where $\O_\varepsilon$ is the $\varepsilon$-neighborhood of $\O$ in the affine chart.
\end{lemma}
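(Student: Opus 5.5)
The plan is to compare the two Hausdorff measures pointwise through their Finsler densities. Then split $U'$ according to the Euclidean distance $\delta(x)$ from $x$ to $\dO$ in the fixed affine chart, and use the radius shift $\pm C$ to absorb the places where the comparison is lossy. Throughout, set $R_\eps=\tfrac12\log\tfrac1\eps$.

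\textbf{Step 1 (depth versus Euclidean distance).} Fix $o$ and the chart. Since $\overline\O$ is compact in the chart and $o$ is interior, a standard crossratio computation along the ray from $o$ through $x$ gives a constant $c_0$ with
$$\Bigl|d_\O(o,x)-\tfrac12\log\tfrac1{\delta(x)}\Bigr|\leq c_0 \qquad\text{for all } x\in\O.$$
(Estimating $\delta$ via the distance to $\dO$ along the ray is harmless: the two differ by a bounded factor, because $o$ sees $\dO$ under angles bounded away from $0$.) Consequently $U'\cap B_\O(o,R_\eps-C)$ is contained in $\{x\in U':\delta(x)\geq e^{2C-2c_0}\eps\}$, and $U'\smallsetminus B_\O(o,R_\eps+C)$ is contained in $\{x\in U':\delta(x)\leq e^{-2C+2c_0}\eps\}$.

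\textbf{Step 2 (pointwise density comparison).} Recall the Finsler norm $F_\O(x,v)=\tfrac12|v|\bigl(\tfrac1{|xa|}+\tfrac1{|xb|}\bigr)$, where $a,b\in\dO$ are the endpoints of the line through $x$ in direction $v$. Since $\O\subset\O_\eps$, we have $F_{\O_\eps}\leq F_\O$. Conversely, each of $|xa|,|xb|$ grows by at most a factor $1+c_1\eps/\delta(x)$ when passing to $\O_\eps$. Hence the densities of the $n$-dimensional Hausdorff measures (which scale like the volume of the unit Finsler ball) satisfy
$$\bigl(1+c_1\eps/\delta(x)\bigr)^{-n}\,d\mu_\O\;\leq\; d\mu_{\O_\eps}\;\leq\; d\mu_\O .$$

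\textbf{Step 3 (right inequality).} Write
$$\Vol_{\O_\eps}(U')=\Vol_{\O_\eps}\bigl(U'\cap B_\O(o,R_\eps+C)\bigr)+\Vol_{\O_\eps}\bigl(U'\smallsetminus B_\O(o,R_\eps+C)\bigr).$$
By Step 2 the first term is at most $\Vol_\O(U'\cap B_\O(o,R_\eps+C))$. For the second term, Step 1 shows the region lies within Euclidean distance $\sim e^{-2C}\eps$ of $\dO$, hence within $\O_\eps$-Hilbert distance $O(1)$ of $\partial\O_\eps$. Moreover, for the $\O_\eps$ metric, it is a shell of bounded thickness over the part of $\partial \O_\eps$ seen from $o$ through $U$. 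Covering this shell by $\O_\eps$-balls of radius $1$ and using Corollary~\ref{volume of balls}, its volume is bounded by a constant times the number of such balls. That number is in turn controlled by the $\O$-volume of the annulus $U'\cap\bigl(B_\O(o,R_\eps+C)\smallsetminus B_\O(o,R_\eps+C-c_2)\bigr)$. This annulus volume is not yet counted in the first term with the density deficit, so enlarging $C$ lets it be absorbed.

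\textbf{Step 4 (left inequality).} On $U'\cap B_\O(o,R_\eps-C)$ the deficit factor from Step 2 is at least $(1+c_1e^{-2C+2c_0})^{-n}$, which is $1-O(e^{-2C})$. The lost mass is at most $O(e^{-2C})\Vol_\O(U'\cap B_\O(o,R_\eps-C))$. I would compensate it with the $\O_\eps$-volume of the annulus $U'\cap\bigl(B_\O(o,R_\eps)\smallsetminus B_\O(o,R_\eps-C)\bigr)$, whose density ratio is bounded below by a fixed constant by Steps 1 and 2.

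\textbf{Main obstacle.} This compensation in Steps 3 and 4 is the step I expect to be hardest. It needs a growth estimate: the $\O$-volume of a width-$O(1)$ annulus in $U'$ at radius $\sim R_\eps$ must dominate a fixed small fraction of the volume of the corresponding ball. I would first try to derive this from Corollary~\ref{volume of balls} by counting disjoint unit balls along rays from $o$ in a cone-like region. If that fails in general, I would fall back on using it only in the setting where it is applied ($\O=\mc O_{\min}$ with the Hessian hypothesis of Lemma~\ref{lem:volume estimate}), where Colbois--Verovic-type estimates give exponential growth and the annulus indeed carries a definite proportion of the ball's volume.
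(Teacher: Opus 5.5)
There is a genuine gap: neither compensation step closes, and the growth estimate you flag as the main obstacle is false at the generality of the lemma.

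\textbf{Step 3.} The only upper bound Step 2 gives is $d\mu_{\O_\eps}\leq d\mu_\O$. So you have no quantitative deficit on $U'\cap B_\O(o,R_\eps+C)$ that could pay for $\Vol_{\O_\eps}(U'\smallsetminus B_\O(o,R_\eps+C))$. The annulus you invoke lies inside $B_\O(o,R_\eps+C)$, so it is already counted in the first term, not ``not yet counted''. The comparison between an $\O_\eps$-covering number at depth about $R_\eps$ and an $\O$-volume at depth $R_\eps+C$ is not argued at all.

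\textbf{Step 4.} You bound the lost mass by $O(e^{-2C})\Vol_\O(U'\cap B_\O(o,R_\eps-C))$ and want a width-$O(1)$ annulus to dominate it. Take $\O$ a triangle and $U$ inside an open edge. In the normed-plane model of the triangle, $U'$ is asymptotically a half-strip of bounded width. Hence $\Vol_\O(U'\cap B_\O(o,R))$ grows linearly in $R$, while a width-$C$ annulus has volume $O(C)$. Your loss bound is therefore of order $e^{-2C}R_\eps$, and it beats the compensation as $\eps\to0$ for every fixed $C$.

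\textbf{The fallback.} It only treats a special case. It would also be circular within the paper, since the localized Theorem~\ref{thm:Colbois Verovic} is deduced from this very lemma. The original Colbois--Verovic result needs the whole boundary to be $\mc C^2$ with positive definite Hessian, which $\mc O_{\min}$ does not have.

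The missing idea is exact scaling, which makes any density comparison unnecessary. Take $o$ as the origin of the chart. The homothety $y\mapsto\lambda y$ is projective; it maps $\lambda^{-1}\O$ onto $\O$ and the cone $U'$ onto $U'\cap\lambda\O$. Hence $\Vol_{\lambda^{-1}\O}(U')=\Vol_\O(U'\cap\lambda\O)$ exactly. Three further facts finish the proof:
\begin{enumerate}
\item The cone argument behind your Step 1 gives $C_1$ with $(1+C_1^{-1}\eps)\O\subset\O_\eps\subset(1+C_1\eps)\O$.
\item A crossratio computation along rays from $o$ gives $B_\O(o,\tfrac12\log\tfrac1{1-\lambda})\subset\lambda\O\subset B_\O(o,\tfrac12\log\tfrac1{1-\lambda}+C_2)$.
\item Hilbert volume is monotone in the domain: a larger domain gives smaller volume.
\end{enumerate}
Together these yield both inequalities, with only additive errors in the radius.

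Your Steps 1--2 are correct. They do give the left inequality up to the multiplicative factor $(1+c_1e^{-2C+2c_0})^{-n}$, but not the exact statement. They also leave the outer region $U'\smallsetminus B_\O(o,R_\eps+C)$ in the right inequality uncontrolled.
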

\begin{proof}
 We equip the affine chart with a vector space structure by setting $o$ to be the zero vector.
 We claim that there exists a constant $C_1\geq 1$ such that for every $\eps<C_1^{-1}$,
 \begin{equation*}
  (1-C_1^{-1}\eps)^{-1} \O \subset \O_\eps \subset (1-C_1\eps)^{-1}\O.
 \end{equation*}
 In other words, we want to find a constant such that for every $x\in\partial\O$, for every $h>0$, the distance in the affine chart (denoted $d_{\mb A}$) from $(1+h)x$ to $\overline{\O}$ is between $C_1^{-1}h$ and $C_1h$.
 The upper bound is easy: $d_{\mb A}((1+h)x,\overline\O)\leq d_{\mb A}((1+h)x,x)=hd_{\mb A}(0,x)\leq h\max_{y\in\partial\O}d_{\mb A}(0,y)$.
 For the lower bound, let $\alpha>0$ such that $\overline B_{\mb A}(O,\alpha)\subset\O$.
 Then by convexity, for any $x\in\partial\O$ the cone based at $x$ antipodal to the cone through $B_{\mb A}(0,\alpha)$ does not intersect $\O$.
 In particular the ball $B_{\mb A}((1+h)x,h\alpha)$ is in that cone, and hence also does not intersect $\O$, which means $d_{\mb A}((1+h)x,\overline\O)\geq h\alpha$, as desired.
 
 We also claim there exists a constant $C_2\geq 1$ such that for every $\lambda<1$,
 \begin{equation*}
  B_\O(o,\tfrac12\log((1-\lambda)^{-1})) \subset \lambda\O \subset B_\O(o,\tfrac12\log((1-\lambda)^{-1})+C_2).
 \end{equation*}
 This is in fact a simple computation, using the definition of the Hilbert metric.
 Indeed for any unit vector $v$ of the affine chart footed at $0$, let $a,b>0$ such that $-av$ and $bv$ are in $\partial\O$.
 Note that $a$ and $b$ are bounded above and below by constants depending on $\O$.
 Then for any $\lambda<1$, we have
 $$
 d_\O(0,\lambda bv)=\tfrac12\log\frac{\lambda b+a}{(1-\lambda) a},
 $$
 which is at least $\tfrac12\log(1-\lambda)^{-1}$ and at most $\tfrac12\log(C(1-\lambda)^{-1})$ for some constant $C$ independent of $v$ and $\lambda$.
 
 Then we can use the monotonicity property of the Hilbert volume to conclude as follows.
 \begin{align*}
  \Vol_{\O_\eps} U' & \leq \Vol_{(1-C_1^{-1}\eps)^{-1}\O} U' = \Vol_\O ((1-C_1^{-1}\eps)U') = \Vol_\O(U'\cap (1-C_1^{-1}\eps)\O) \\
  & \leq \Vol_\O\left(U'\cap B_\O(o,\tfrac12 \log(\tfrac1\eps) + \tfrac12\log C_1 + C_2)\right),
 \end{align*}
 and
 \begin{align*}
  \Vol_{\O_\eps} U' & \geq \Vol_{(1-C_1\eps)^{-1}\O} U' = \Vol_\O ((1-C_1\eps)U') = \Vol_\O(U'\cap (1-C_1\eps)\O) \\
  & \geq \Vol_\O\left(U'\cap B_\O(o,\tfrac12 \log(\tfrac1\eps) - \tfrac12\log C_1)\right).
 \end{align*}
\end{proof}

\begin{proposition}\label{prop:Hilbert volume and subspaces}
 For any $\eta>0$, there exists a constant $C=C(\eta)>0$ such that for any properly convex open set $\O$ and any subspace $\PP(W)\subset \R\PP^d$ such that $\O':=\O\cap\PP(W)$ is nonempty, for any measurable subset $A\subset \O'$,
 \begin{equation*}
  \Vol_{\O'}(A) \leq C \Vol_\O (A_\eta),
 \end{equation*}
 where $A_\eta$ is the $\eta$-neighborhood of $A$ \emph{in $\O$}.
\end{proposition}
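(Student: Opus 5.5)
The plan is a covering argument: compare both volumes to counts of Hilbert balls, using Benzécri's compactness theorem (Fact~\ref{benz}) through Corollary~\ref{volume of balls}. Two facts drive it. First, the Hilbert metric of $\O'$ is the restriction of that of $\O$ to $\O'$: a line in $\PP(W)$ meets $\partial\O'$ exactly where it meets $\partial\O$, so the cross-ratios agree and $d_{\O'}=d_\O|_{\O'}$. Second, balls of a fixed radius have volume bounded above and below by constants depending only on the dimension and the radius, uniformly over all properly convex sets and centres. We may assume $A$ is nonempty and, by inner regularity and monotone convergence, bounded.

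First I pick a maximal $\eta/2$-separated subset $\{a_j\}_{j\in J}$ of $A$ for $d_{\O'}=d_\O$. This set is countable by properness. Maximality gives $A\subset\bigcup_j B_{\O'}(a_j,\eta/2)$. Applying Corollary~\ref{volume of balls} in $\O'$, with dimension $\dim W-1$ and radius $\eta/2$, gives
$$\Vol_{\O'}(A)\leq \sum_j\Vol_{\O'}(B_{\O'}(a_j,\eta/2))\leq C_1\,\#J.$$

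Second, the balls $B_\O(a_j,\eta/4)$ in the big domain are pairwise disjoint, because the $a_j$ are $\eta/2$-separated for $d_\O$. They are all contained in $A_\eta$. Corollary~\ref{volume of balls} in $\O$ then gives $\Vol_\O(A_\eta)\geq C_2^{-1}\#J$. Combining the two bounds yields $\Vol_{\O'}(A)\leq C_1C_2\Vol_\O(A_\eta)$. The constant $C_1C_2$ depends only on $\eta$ and the dimensions, and there are finitely many possible values of $\dim W$, so it is uniform.

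The main point to check is the uniformity in Corollary~\ref{volume of balls}: it must hold simultaneously for all properly convex sets of each dimension, including sections $\O'$ of arbitrary $\O$. This is exactly what Benzécri's theorem provides. A second check is that the Hausdorff measure of $\O'$ is taken with respect to its own Hilbert metric. By the first fact above, that metric coincides with the restricted one, so the covering in the first step is legitimate.
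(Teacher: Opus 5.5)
Your proof is correct. It follows the same overall template as the paper: a maximal separated net in $A$, the identity $d_{\O'}=d_\O|_{\O'}$, and the uniform ball-volume bounds of Corollary~\ref{volume of balls} coming from Benzécri. The difference is in how the sum over the net is compared with $\Vol_\O(A_\eta)$.

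The paper works at a single radius. It takes an $\eta$-separated net, covers $A$ by $\eta$-balls of $\O'$, and compares each with the $\eta$-ball of $\O$ having the same centre. It must then control the overlap of these $\O$-balls. For this it proves a separate bounded-multiplicity statement, by a second Benzécri compactness argument: a ball of radius $\eta$ contains at most $N$ points of any $\eta$-separated set. This produces the constant $NC^2$.

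You instead take an $\eta/2$-separated net and use the pairwise disjoint balls $B_\O(a_j,\eta/4)\subset A_\eta$ for the lower bound. This removes the need for the multiplicity lemma: only Corollary~\ref{volume of balls} at two radii is used, and the counting is immediate.

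Two small remarks. Take the balls open, or shrink the radius slightly, so that disjointness is literal. Also, the case $\#J=\infty$ is covered automatically, since it forces $\Vol_\O(A_\eta)=\infty$, so your reduction to bounded $A$ is harmless but unnecessary.
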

\begin{proof}
 We will need the following fact about Hilbert geometry: for any $\eta>0$ there is $N$ such that for any properly convex open set $\mc O$ of dimension $d-1$, for any $\eta$-separated subset $E\subset\mc O$ (any two points are at distance at least $\eta$), any closed ball of radius $\eta$ of $\mc O$ contains at most $N$ points of $E$.
 Indeed this is a consequence of Benzécri's compactness Theorem~\ref{benz}.
 
 Let us prove the fact.
 Suppose by contradiction that for any $N$ there is a $\eta$-separated set $E_N=\{y_{N,1},\dots,y_{N,k_N}\}$ in a ball $B(x_N,\eta)$ in a properly convex open set $\mc O_N$ such that $k_N\to\infty$.
 By Benzécri's compactness theorem, up to extraction we can assume $\mc O_N\to\mc O$ and $x_N\to x\in\mc O$.
 Then $B(x_N,\eta)\to B(x,\eta)$, and finally $y_{N,i}\to y_i\subset B(x,\eta)$ for any $i$.
 Then $\{y_1,\dots\}$ is an infinite $\eta$-separated subset of $B(x,\eta)$, which is impossible in a compact metric space.

 Consider a maximal $\eta$-separated family $B\subset A$.
 By the above, at most $N$ points of $B$ can be in the same ball of radius $\eta$.
 In other words, any point of $\O$ is in at most $N$ balls centered at points of $B$ of radius $\eta$.
 
 By Corollary~\ref{volume of balls}, there is a constant $C>0$ such that any ball of radius $\eta$ of $\O$ or $\O'$ has measure at least $C^{-1}$ and at most $C$.
 
 We conclude with the following estimates.
 
 \begin{align*}
     \Vol_{\O'}(A) 
     &\leq \Vol_{\O'}\left(\bigcup_{b\in B}B_{\O'}(b,\eta)\right)
     \leq \sum_{b\in B}\Vol_{\O'}\left(B_{\O'}(b,\eta)\right) \\
     &\leq C^2  \sum_{b\in B}\Vol_{\O}\left(B_{\O}(b,\eta)\right)
     \leq NC^2\Vol_\O(A_\eta)
 \end{align*}
 
\end{proof}

\begin{theorem}[{\cite[Th.\,2.2]{ColVerpositivehessian}}]\label{thm:Colbois Verovic}
 Let $\Omega$ be a properly convex open set of dimension $d$, and fix $o\in\Omega$ and an affine chart containing $\Omega$.
 Fix an open subset $U\subset \dO$ and denote by $U'\subset\O$ the set of rays from $o$ to $U$.
 Suppose $\dO$ is $\mc C^2$ with positive definite Hessian at $U$.
 Then there exists a constant $C>0$ such that for any $R>0$,
 \begin{equation*}
  \Vol_\O(B_\O(o,R)\cap U')\geq C^{-1}e^{(d-1)R}.
 \end{equation*}
\end{theorem}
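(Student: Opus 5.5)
The plan is a direct estimate of the density of the Hilbert volume near $U$, followed by an integration in the affine chart. First, shrink $U$ to a relatively compact open subset $U_0\Subset U$ on which $\dO$ is $\mc C^2$ and its second fundamental form (with respect to the Euclidean structure of the chart) satisfies uniform bounds $\lambda\leq \mathrm{Hess}\leq\Lambda_0$ with $\lambda>0$. It suffices to prove the bound for $U_0'\subset U'$. We work in the chart with $o$ as origin. By the computation in the proof of Lemma~\ref{lem:eps nbhd and balls}, $\lambda\O\subset B_\O\bigl(o,\tfrac12\log((1-\lambda)^{-1})+C_2\bigr)$. Hence, for $R$ large, $B_\O(o,R)\cap U_0'$ contains the truncated cone $\{sx : x\in U_0,\ s\in[s_0,1-c\,e^{-2R}]\}$ for some constants $c,s_0$. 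So we are reduced to bounding from below the Hilbert volume of this truncated cone.

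The second step is to bound the density. Since all natural Hilbert volumes are comparable (Benzécri, Fact~\ref{benz}), we may use the Busemann-type density. At $y\in\O$ it is comparable to $1/\mathrm{Leb}(B_y)$, where $B_y=\{v: F_\O(y,v)<1\}$ and $F_\O(y,v)=\tfrac12|v|\bigl(\tfrac1{|y-a|}+\tfrac1{|y-b|}\bigr)$ with $a,b$ the endpoints of the chord through $y$ in direction $v$. Since $F_\O(y,v)\geq \tfrac12|v|/\min(|y-a|,|y-b|)$, the unit ball $B_y$ is contained in twice the symmetrised set $(\O-y)\cap(y-\O)$. Now take $y=sx$ with $x\in U_0$ and put $\delta=1-s$, which is comparable to the Euclidean distance from $y$ to $\dO$. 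The lower Hessian bound gives the key geometric input. Near $x$, the domain $\O$ lies between its tangent hyperplane $T_x\dO$ and the paraboloid with curvature $\lambda$, up to a uniform error. Hence $(\O-y)\cap(y-\O)$ lies in a box of height $O(\delta)$ in the normal direction and width $O(\sqrt{\delta/\lambda})$ in each of the $d-1$ tangent directions. These constants are uniform over $x\in U_0$ by compactness and $\mc C^2$ regularity. This gives the density bound
$$\frac{d\Vol_\O}{d\mathrm{Leb}}(y)\geq c\,\delta^{-1-\frac{d-1}2}=c\,\delta^{-\frac{d+1}2}.$$

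Finally, integrate in the coordinates $(x,s)\in U_0\times[s_0,1)$. The Jacobian of $(x,s)\mapsto sx$ is bounded above and below there. We get
$$\Vol_\O(B_\O(o,R)\cap U')\geq c'\int_{c e^{-2R}}^{1-s_0}\delta^{-\frac{d+1}2}\,d\delta\geq C^{-1}e^{(d-1)R},$$
since $(d+1)/2-1=(d-1)/2$ and $(e^{-2R})^{-(d-1)/2}=e^{(d-1)R}$; this requires $d\geq2$, and for $d=1$ the claimed bound is trivial. I expect the main obstacle to be the uniform box estimate in the second step. The paraboloid comparison must hold on a neighbourhood of fixed size around each $x\in U_0$. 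The far side of the symmetrised set must also not spoil the tangent widths, which holds because the reflection $y-\O$ is bounded by a comparable paraboloid. Both points follow from a uniform Taylor expansion of a local graph parametrisation of $\dO$ over $U_0$.
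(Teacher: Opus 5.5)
Your proof is correct, but it takes a different route from the paper.

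\textbf{The paper's route.} It first treats the case where all of $\dO$ is $\mc C^2$ with positive definite Hessian. There it quotes Colbois--Verovic's bi-Lipschitz homeomorphism between a collar of $\dO$ and a collar of $\partial\Hb^d$. It then localises: it enlarges $\O$ to some $\O'\supset\O$ with such a boundary containing a relatively compact $V\subset U$, a construction it only asserts. The two volumes are compared through Lemma~\ref{lem:eps nbhd and balls} and the monotonicity of $\O\mapsto\Vol_\O$.

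\textbf{Your route.} You work directly with the Busemann density, which coincides with the Hausdorff measure by Busemann's theorem, so the appeal to Benz\'ecri is not even needed. The inclusion $B_y\subset 2\bigl((\O-y)\cap(y-\O)\bigr)$ plus a uniform quadratic bound gives $\frac{d\Vol_\O}{d\mathrm{Leb}}(y)\geq c\,\delta^{-(d+1)/2}$. Integrating in the coordinates $(x,s)$ then finishes the proof.

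\textbf{The obstacle you flag.} It is resolved by convexity alone. Suppose $\mathrm{dist}(w,T_x\dO)\geq c\,|w-x|^2$ for $w\in\overline\O$ with $|w-x|\leq r$, with $c,r$ uniform over $x\in\overline{U_0}$. Comparing with the point of $[x,w]$ at distance $r$ from $x$ gives $\mathrm{dist}(w,T_x\dO)\geq c\,r\,|w-x|\geq c\,r^2$ whenever $|w-x|\geq r$. So for $\delta$ small, every point of $\O$ within $2\delta$ of $T_x\dO$ lies in $B(x,r)$.

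Now write $y+z\in\O$ and $y-z\in\O$. Together these only confine the normal component of $z$ to a slab of width $O(\delta)$. The tangential bound $O(\sqrt{\delta})$ then follows from $y+z\in\O$ alone, so no reflected paraboloid is needed.

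\textbf{Comparison.} Your argument is self-contained and purely local. It uses only the lower curvature bound; your $\Lambda_0$ is never needed. It also avoids both the bi-Lipschitz black box and the unproved extension $\O'$. The paper's route, by contrast, yields a two-sided comparison with $\Hb^d$ near $U$.

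\textbf{Range of $R$.} Like the paper's, your argument only gives the inequality for $R\geq R_0$. This is unavoidable: the left-hand side tends to $0$ as $R\to 0$ while $e^{(d-1)R}\to1$. It is also all that is used later.
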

\begin{proof}
 Assume that the whole boundary $\dO$ is $\mc C^2$ with positive definite Hessian.
 Colbois--Verovic  construct a homeomorphism $f:V_1\to V_2$ from an open neighborhood $V_1$ of $\dO$ in $\overline \O$ onto an open neighborhood $V_2$ of $\partial\Hb^d$ in $\overline\Hb{}^d$ which is biLipschitz for the respective Hilbert metrics when restricted to $\O\cap V_1$.
 The estimate $\Vol_\O(B_\O(o,R)\cap U')\geq C^{-1}e^{(d-1)R}$ follows easily in this case.
 
 Let us come back to the general case.
 Consider a relatively compact open subset $V\subset U$,  and denote by $V'\subset\O$ the set of rays from $o$ to $V$.
 One can construct a properly convex open set $\O'$ such that  $\dO'$ is $\mc C^2$ with positive definite Hessian, and $\O\subset\O'$, and $V\subset \dO'$.
 
 Lemma~\ref{lem:eps nbhd and balls} gives $C>0$ such that for every $R$,
 \begin{align*}
  \Vol_\O(B_\O(o,R)\cap U') \geq \Vol_{\O_{e^{-2R-2C}}}(V')
  \geq \Vol_{\O'_{e^{-2R-2C}}}(V')
  \geq \Vol_{\O'}(B_{\O'}(o,R-2C)\cap V'),
 \end{align*}
 which concludes the proof. 
\end{proof}

\begin{proof}[Proof of Lemma~\ref{lem:volume estimate}]
 There exists a subspace $\PP(W)\subset \R\PP^n$ of dimension $d+1$ such that 
 \begin{itemize}
  \item $\O':=\O\cap\PP(W)$ contains $o$;
  \item $\dO'$ intersects $U$;
  \item $\dO'$ is $\mc C^2$ with positive definite Hessian on a relatively compact open subset $V\subset U\cap \dO'$.
 \end{itemize}
 Let $V'\subset \O'$ be the set of ray from $o$ to $V$.
 
 Using relative compactness of $V$ in $U$ and the fact that there exists a constant $\mr{Cst}$ such that $B_\O(x,R)$ is contained in the Euclidean ball centered at $x$ of radius $\mr{Cst}\cdot R$ for all $x\in \O$ and $R>0$, one can find $\eta>0$ such that the $\eta$-neighborhood of $V'$ in $\O$ for the Hilbert metric is contained in the union of $U'$ and a compact set $K$.
 
 Combining Lemma~\ref{lem:eps nbhd and balls}, Theorem~\ref{thm:Colbois Verovic} and Proposition~\ref{prop:Hilbert volume and subspaces}, one obtains constants $C_1,C_2,C_3$ such that for every $\eps$,
 \begin{align*}
  \Vol_{\O_\eps}(U') & \geq \Vol_{\O}\left(U'\cap B_\O \left(o,\tfrac12\log(\tfrac1\eps) - C_1\right)\right) \\
  & \geq C_2^{-1}\Vol_{\O'}\left(V'\cap B_{\O'} \left(o,\tfrac12\log(\tfrac1\eps) - C_1 -\eta\right)\right) - \Vol_\O(K) \\
  & \geq C_2^{-1} C_3^{-1} e^{d(\tfrac12\log(\tfrac1\eps) - C_1 -\eta)} - \Vol_\O(K) \\
  & \geq C^{-1} \tfrac1{\eps^{d/2}}\qedhere
 \end{align*}
\end{proof}

\subsection{Construction of infinite volume weakly geometrically finite examples}

\begin{proposition}\label{prop infinite volume}
 Consider a nonuniform lattice $\Gamma\subset \PSL_2(\R)$.
 Let $f:\mc C/\G\to[0,\infty)$ be a depth function.
 
 Consider a $\Gamma$-invariant round convex domain $\O$ whose height function satisfies $u_\O(x)\leq Ce^{-\frac12f(x)}$ for some constant $C>0$.
 Then $\mc C/\Gamma$ has infinite volume for the Hilbert volume of $\O$.
\end{proposition}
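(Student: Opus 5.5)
We will show that in each cusp the volume of $\mc C/\G$ for $\Vol_\O$ is bounded below by a divergent sum over horocyclic layers. Identify a cusp with $\mc C$ near a parabolic point $p\in\Lambda$: up to $\rho(\PSL_2(\R))$ take $p=[X^4]$, with stabiliser in $\G$ generated by a unipotent $h=\rho\begin{sm}1&1\\0&1\end{sm}$. Use the $\SL_2(\R)$-equivariant projection $\pi:\mc C\to\mb H^2$ from Section 3. Points of $\mc C$ lying over the horoball region of depth between $s$ and $s+1$ form a slab $S_s$, and $S_s/\langle h\rangle$ embeds in $\mc C/\G$ for $s$ large. The slabs are disjoint for integer $s$, so it suffices to show that $\Vol_\O(S_s/\langle h\rangle)$ does not tend to zero, or at least that its sum over $s$ diverges.

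To estimate one slab, act by the diagonal element $a_s=\rho\begin{sm}e^{-s'}&0\\0&e^{s'}\end{sm}$, where $s'$ is chosen so that $a_s$ sends depth $s$ to depth $0$. Under $a_s$, the slab $S_s$ becomes a fixed compact-height slab $S_0$, but the group $\langle h\rangle$ becomes $\langle h_s\rangle$ with translation length about $e^{-2s'}$. So a fundamental domain of $S_0$ for $\langle h_s\rangle$ is a piece of width $\sim e^{-2s'}$ in the unipotent direction, and it is made of points of $\mc C$ at $\dmax$-distance roughly $\le f\approx s$ from the compact part. Meanwhile $a_s\O$ has height function $u_{a_s\O}\le Ce^{-\frac12 f}\approx Ce^{-s/2}$ on the relevant region. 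In the fixed affine chart near a compact piece of $\dni\mc C$, this means $a_s\O$ is contained in the $\eps$-neighbourhood of $\mc O_{\min}$ with $\eps\asymp e^{-s/2}$.

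Apply Lemma~\ref{lem:volume estimate} to $\mc O_{\min}$, with $U$ a small open subset of $\dni\mc C$ around $[X^4+Y^4]$. Since $\dni\mc C$ is a smooth orbit, it is $\mc C^2$ there. Its Hessian has rank $2$: the faces are segments, so the rank is at most $3-1$, and by transitivity and nondegeneracy it should be exactly $2$. This is a point to verify by a local computation. Monotonicity of Hilbert volume then gives a cone-region volume $\gtrsim \eps^{-1}$. Using $\G$-invariance and homogeneity in the unipotent and geodesic directions, this volume is spread roughly uniformly along the $h$-direction over a region of unit size. So the $\langle h_s\rangle$-fundamental domain of width $e^{-2s'}$ inherits volume about $e^{-2s'}\eps^{-1}$.

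The main obstacle is the bookkeeping of exponents. We must match the normalisations $f\leftrightarrow$ depth in $\mb H^2$, which carry the factor $4$ noted in the proof of Proposition~\ref{first lower bound}, so that the gain $\eps^{-d/2}$ exactly compensates the shrinking width, giving slab volumes bounded below by a constant. The hypothesis's exponent $\tfrac12$ is presumably chosen precisely so that this balance holds. We also need the localised version of Lemma~\ref{lem:volume estimate}: the volume excess concentrates within bounded distance of $\dni\mc C$ and is equidistributed along horocycles. For this we would use the $g_t$- and $h$-equivariance together with Corollary~\ref{volume of balls} to compare neighbouring pieces. Summing constant lower bounds over infinitely many disjoint slabs gives infinite volume.
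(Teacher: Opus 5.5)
Your architecture matches the paper's. You use disjoint horocyclic slabs, renormalise by the diagonal flow, take a $\langle h_s\rangle$-fundamental domain of width $e^{-2s'}$, and apply Lemma~\ref{lem:volume estimate} to $\mc O_{\min}$ with $d=2$. The exponents do close: $g_t$ moves $\dni\mc C$ by $\dmax$-distance $4t$, so $s'=s/4$ and $e^{-2s'}=e^{-s/2}\asymp\eps$.

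The ``equidistribution along horocycles'' you worry about is unnecessary. The domain $a_s\O$ is exactly $\langle h_s\rangle$-invariant, since $a_sh_ta_s^{-1}=h_{e^{-2s'}t}$. The paper uses $\Vol_{\O/\G}(\pi(E))=\int_E \#(\G x\cap E)^{-1}\,d\Vol_\O(x)$. Combined with the multiplicity bound $\#(\G x\cap a_s^{-1}U')\leq 2e^{2s'}+1$, which follows from $h_tU'\cap U'\neq\varnothing\Rightarrow|t|\leq 1$, this gives $\Vol_{\O/\G}(\pi(a_s^{-1}U'))\geq \tfrac13 e^{-2s'}\Vol_{a_s\O}(U')$ directly.

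The genuine gap is in the containment step. Monotonicity gives $\Vol_{a_s\O}(U')\geq\Vol_{(\mc O_{\min})_\eps}(U')$ only if $a_s\O\subset(\mc O_{\min})_\eps$ \emph{globally}. You only establish this near a compact piece of $\dni\mc C$.

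Globally the height of $a_s\O$ is not small. Above points $x$ with $a_s^{-1}x$ in the thick part, $u_{a_s\O}(x)=u_\O(a_s^{-1}x)$ is of order $1$. Your fallback, a ``localised'' Lemma~\ref{lem:volume estimate}, is not supplied and is not automatic. The Hilbert volume density at points of $U'$ depends on where lines through them leave the domain. Along the segment faces of $\dni\mc C$ those exits lie near $\Lambda$, far from $U$, and nothing in your sketch addresses this.

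The missing idea is that height is discounted exponentially when converted to Euclidean distance. By Lemma~\ref{lem: dim4ex: eucl diam of hilb balls}, the point at height $h$ above $x\in\dni\mc C$ is within Euclidean distance $C_3he^{-\dmax(o,x)}$ of $x$. Moreover, for \emph{every} $x\in\dni\mc C$ one has $f(a_s^{-1}x)+\dmax(o,x)\geq 4s'-\mathrm{Cst}$. To see this, write $\mc C=\bigsqcup_{\ell}g_\ell A$ with $x\in g_\ell A$. If $|\ell|\geq s'$, use $\dmax(o,x)\geq 4|\ell|-\mathrm{Cst}$. If $|\ell|<s'$, add the bound $f(g_{s'}x)\geq 4(s'+\ell)-\mathrm{Cst}$ to $\dmax(o,x)\geq 4|\ell|-\mathrm{Cst}$.

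Hence $a_s\O$ lies in the Euclidean $O(e^{-2s'})$-neighbourhood of $\mc C$ everywhere. Monotonicity and Lemma~\ref{lem:volume estimate} then apply as stated, with no localisation needed.
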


\begin{proof}
Recall we have a proper $\SL_2(\R)$-equivariant map $\psi:\mc C\to\mb H^2$.
Consider a decomposition of $\mb H^2/\G$ into a compact part and finitely many cusps, and let $H$ be the preimage under $\psi$ of a horoball corresponding to one of these cusps, stabilised by a parabolic group $P\subset \G$.
We want to show $H/P$ has infinite Hilbert volume (in $\O$).

Denote $h_s:=\begin{psmallmatrix}1&s\\0&1\end{psmallmatrix}$ and $g_t:=\begin{psmallmatrix}e^{t}&0\\0&e^{-t}\end{psmallmatrix}$.
Without loss of generality, we may assume $P=\{h_l:l\in\Z\}\subset\Gamma$ and $g_tH\subset H$ for any $t\geq 0$.
Let $A=H\smallsetminus g_1H$, so that $H/P=\bigsqcup_{k\geq 0}(g_kA/P)$.
We are going to bounded from below the volume of $g_kA/P$ independently of $k$.
 
 Fix an affine chart containing $\overline{\mc O}_{\max}$, with a $\rho(\mr{SO}(2))$-invariant Euclidean metric.
Consider $o$ in the interior of $\mc C$ and in $A$ and a relatively compact open subset $U$ of $\dni\mc C$ such that the set of rays $U'\subset\mc C$ from $o$ to $U$ is contained in $A$, see Figure~\ref{fig:infinite volume}

	\begin{figure}
		\centering

\begin{tikzpicture}[
  line cap=round,
  line join=round,
  every node/.style={font=\small}
]

\coordinate (T) at (0,6.00);
\coordinate (L) at (-2.62,1.55);
\coordinate (R) at ( 2.62,1.55);
\coordinate (B) at (0,-2.90);

\def\a{3.00}
\def\b{0.2112347765}

\coordinate (P1) at (-1.8248,1.55);
\coordinate (P2) at (-0.2958,1.55);
\coordinate (P3) at ( 1.3858,1.55);

\coordinate (U1) at (0.15,1.55);
\coordinate (U2) at (0.65,1.55);

\coordinate (o) at (0.38,2.18);

\fill[gray!35]
  plot[domain=-1.4:-0.2, samples=50] (\x,{\a-\b*\x*\x}) -- (P2) -- (P1) -- cycle;

\fill[gray!35]
  plot[domain=-0.2:1.0, samples=50] (\x,{\a-\b*\x*\x}) -- (P3) -- (P2) -- cycle;

\fill[blue!12] (o) -- (U1) -- (U2) -- cycle;
\draw[blue!60!black, thick] (o) -- (U1);
\draw[blue!60!black, thick] (o) -- (U2);

\draw[black, thick] (T) -- (-5.24,-2.90);
\draw[black, thick] (T) -- ( 5.24,-2.90);

\draw[black, thick] (L) arc[start angle=151,end angle=389,radius=3.00];

\draw[green!55!black, thick] (L) -- (R);
\draw[green!55!black, thick] plot[domain=-2.62:2.62, samples=120] (\x,{\a-\b*\x*\x});

\draw[black, thick] (T) -- (P1);
\draw[black, thick] (T) -- (P2);
\draw[black, thick] (T) -- (P3);

\draw[blue!70!black, line width=1.5pt] (U1) -- (U2);

\fill[red] (T) circle (2.3pt);
\fill[black] (L) circle (2.3pt);
\fill[black] (R) circle (2.3pt);
\fill[black] (B) circle (2.3pt);
\fill[black] (o) circle (2.1pt);

\node[above=1pt] at (T) {$X^2Y^2$};
\node[left=5pt]  at (L) {$X^4$};
\node[right=5pt] at (R) {$Y^4$};

\node at (-0.98,2.43) {$g_1A$};
\node at (0.55,2.7) {$A$};
\node at ($(o)+(0.15,0.15)$) {$o$};
\node[blue!70!black] at ($(U1)!0.5!(U2)+(0,-0.25)$) {$U$};
\node[blue!60!black] at (0.76,1.8) {$U'$};

\end{tikzpicture}

		\caption{Illustration of $A$, $U$ and $U'$ in a slice for the proof of Proposition~\ref{prop infinite volume}}
		\label{fig:infinite volume}
	\end{figure}

Observe that, if $U$ is small enough and $o$ close enough to $U$ then for all $s\in \R$, that $h_s U' \cap U' \neq \varnothing$ implies $|s| \leqslant 1$.

Let $\pi:\mc C \to \mc C/\G$ be the projection map on the quotient. For every measurable $E \subset \O$ relatively compact, one has:
$$
\Vol_{\O/\G} (\pi(E) ) = \int_E \frac{1}{\# (\G x \cap E)} d\Vol_{\O}(x).
$$

Hence, the sets $(\pi (g_k U'))_k$ are disjoints, hence $\Vol_{\Omega/\Gamma} (\mc C/\G) \geqslant \sum_{k \geqslant 1} \Vol_{\O/\G} (\pi(g_k U'))$. To conclude the proof, it is enough to show that the volumes in $\O/\G$ of the $\pi(g_k U')$ are bounded below.
 
 \medskip
 
Using the fact that $g_{-t}h_{s}g_t=h_{e^{-2t}s}$, and the fact that $h_s U' \cap U' \neq \varnothing$ implies $|s| \leqslant 1$, one gets that:
$$
\forall x \in g_k U', \quad \# (\G x \cap g_k U') \leqslant 2e^{2k} + 1.
$$
And hence
$$
\Vol_{\O/\G} (\pi(g_k U')) \geqslant \frac{e^{-2k}}{3} \Vol_{\O}(g_k U') =  \frac{e^{-2k}}{3} \Vol_{g_{-k}\O}(U').
$$

 Thus to conclude the proof it suffices to find a constant $C_1>0$ such that for every $k$,
 $$ \Vol_{g_{-k}\O}(U') \geq C_1^{-1}e^{2k}.$$
 
It suffices to find $C_2$ such that $g_{-k}\O$ is contained in the Euclidean $C_2 e^{-2k}$-open neighborhood $\O'$ of $\mc C$ for every $k$. Indeed, in this case
$$
\Vol_{g_{-k}\O}(U') \geqslant \Vol_{\O'}(U') \geqslant \mathrm{Cst}^{-1}\frac{1}{(C_2 e^{-2k})^{\nicefrac{2}{2}}} = (\mathrm{Cst}\cdot C_2)^{-1} e^{2k}
$$
by Lemma~\ref{lem:volume estimate}, since $\partial \mc C$ is $\mc C^2$ at $U$ and the Hessian has rank at least $2$. Note that we have used the decreasingness of $\Omega \mapsto \Vol_\O (A)$ for the first inequality.

In order to find $C_2$, we use Lemma~\ref{lem: dim4ex: eucl diam of hilb balls} below: there is a constant $C_3$ such that for every $x\in\dni\mc C$, the point of $g_{-k}\partial \O$ above it, which is at height 
$$u_{g_{-k}\O}(x)
=u_\O(g_kx)
\leq Ce^{-\tfrac12f(g_kx)},$$
is at Euclidean distance at most $CC_3e^{-\tfrac12f(g_kx)-d(o,x)}$ from $x$ and hence from $\mc C$.
To conclude the proof it suffices to prove $-\tfrac12f(g_kx)-d(o,x)\leq -2k+\mr{Cst}$, so it suffices to show $f(g_kx)+d(o,x)\geq 4k-\mr{Cst}$

Up to an additive error term independent of $k$, we have $f(g_kx)= \dmax (g_kx,\G\cdot o)$.
Note that $\mc C=\bigsqcup_{\ell\in\mb Z}g_\ell A$, and for any $\ell\in\mb Z$ and $y\in g_\ell A$, we have $d(o,y)\geq 4\vert \ell\vert-\mr{Cst}$ and if $\ell\geq 0$ then $d(y,\G\cdot o)\geq 4\ell-\mr{Cst}$.

Let $\ell\in\mb Z$ such that $x\in g_\ell A$.
If $\vert \ell\vert\geq k$ then we have $f(g_kx)+ \dmax (o,x)\geq 4k-\mr{Cst}$ as desired.
Suppose $\vert \ell\vert\leq k$.
Then $g_kx\in g_{k+\ell}A$ with $k+\ell\geq 0$ so $f(g_kx)\geq 4(k+\ell)-\mr{Cst}$, and $f(g_kx)+ \dmax (o,x)\geq 4(k+\ell)+4\vert\ell\vert -\mr{Cst}\geq 4k-\mr{Cst}$ as desired, which concludes the proof.
\end{proof}

 \begin{lemma}\label{lem: dim4ex: eucl diam of hilb balls}
  There is a constant $C_3$ such that for all $x\in\dni\mc C$ and $0\leq h\leq 1/2$, the point at height $h$ above $x$ is at Euclidean distance at most $C_3 h e^{- \dmax (o,x)}$ from $x$.
 \end{lemma}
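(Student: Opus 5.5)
The plan is to reduce to explicit coordinates via the $\rho(\SL_2(\R))$-action, using that the affine chart carries a $\rho(\SO(2))$-invariant Euclidean metric and that $o$ may be taken (up to changing $C_3$) to be the $\SO(2)$-fixed point $[(X^2+Y^2)^2]$. Since changing $o$ changes $\dmax(o,x)$ by a bounded amount, this only affects the constant. By $\PSL_2(\R)$-transitivity on $\dni\mc C$ (Lemma~\ref{lem: sl2orbits param}) and the Cartan decomposition, any $x\in\dni\mc C$ can be written $x=\rho(r g_{s} r')[X^4+Y^4]$ with $r,r'\in\SO(2)$ and $s\in\R$. The height parametrisation is equivariant, so the point at height $h$ above $x$ is $\rho(rg_sr')[X^4+Y^4-4hX^2Y^2]$. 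Applying $\rho(r)^{-1}$, which is a Euclidean isometry fixing $o$, we may assume $r=\mr{id}$. The remaining rotation $r'$ ranges over a compact set and acts smoothly, so it only contributes bounded factors. Note that it does not fix $X^4+Y^4$, so it cannot simply be absorbed; we keep it as a compact parameter.

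The core computation is then as follows. Set $P_\theta=\rho(r'_\theta)(X^4+Y^4)$ and $Q_\theta=\rho(r'_\theta)(X^2Y^2)$, both depending continuously on $\theta$. We compare $[\rho(g_s)(P_\theta-4hQ_\theta)]$ with $[\rho(g_s)P_\theta]$ in the chart $\{\phi'=1\}$, where $\phi'$ is an $\SO(2)$-invariant linear form positive on $\overline{\mc O}_{\max}$, for instance $\phi'(P)=\int_0^{2\pi}P(\cos t,\sin t)\,dt$. In this chart, the Euclidean distance between $[A]$ and $[A+B]$ is bounded by a constant times $\|B\|/\phi'(A)$, provided $\phi'(A+B)\ge c\,\phi'(A)$. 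That proviso holds here because $h\le1/2$ keeps everything in $\overline{\mc O}_{\max}$, with $\phi'$ comparable at both endpoints of the segment; this comparability can be checked via the explicit minimum of $\phi'$ along the segment, as in Lemma~\ref{lem:phipsi}.

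We need two estimates.

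\begin{enumerate}
\item \emph{Lower bound on the denominator.} We need $\phi'(\rho(g_s)P_\theta)\gtrsim e^{4|s|}$. Indeed, $g_s$ scales the $X^4$ and $Y^4$ coefficients by $e^{\pm4s}$, and $P_\theta$ is a positive polynomial whose coefficients of $X^4$ and $Y^4$, namely $P_\theta(1,0)$ and $P_\theta(0,1)$, are bounded below uniformly in $\theta$ (they are values of $\cos^4+\sin^4$-type expressions, which are at least $1/2$). Integrating against $\phi'$ and using positivity gives the bound.
\item \emph{Upper bound on the numerator.} We need $\|\rho(g_s)(4hQ_\theta)\|\lesssim h\,e^{4|s|}\cdot e^{-4|s|}\cdot(\dots)$. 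This is the delicate point. A naive bound gives only $\|\rho(g_s)Q_\theta\|\lesssim h e^{4|s|}$, leading to distance $\lesssim h$, which loses the factor $e^{-\dmax(o,x)}$.
\end{enumerate}

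The fix is to measure the distance between projective points rather than vectors. Write $A=\rho(g_s)P_\theta$ and $B=-4h\rho(g_s)Q_\theta$. The distance is controlled by $\|B-\lambda A\|/\phi'(A)$ for the best $\lambda$, that is, by the component of $B$ transverse to $A$ divided by $\phi'(A)$. Equivalently, we bound the norm of the derivative of $[\cdot]$ at $A$, which is of order $1/\phi'(A)$ on the complement of $A$, applied to $B$.

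I would try the simplest version first: bound $\|B\|\lesssim h\,e^{4|s|}$ coefficientwise, then project away from $A$. The dominant coefficients of both $A$ and $B$ sit on $X^4$ and $Y^4$ in ratio $Q_\theta(1,0):P_\theta(1,0)$ for the $X^4$ term. If the transverse part turned out to be only $O(h)$, this would not suffice. I therefore expect the main obstacle to be extracting the decay $e^{-4|s|}$.

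On reflection, there is a cleaner route. Since $\dmax(o,x)=4|s|+O(1)$, it suffices to show distance $\lesssim h\,e^{-4|s|}$ directly. Use the derivative of $g_s$ acting on the chart near the point $x$: the differential of the projective map $\rho(g_s)$ at a point close to $[P_\theta]$, measured in the chart, contracts all directions transverse to the attracting point $[X^4]$ (or $[Y^4]$) by factors like $e^{-2|s|}$ to $e^{-8|s|}$. The height direction $Q_\theta$ is transverse to the face $F_x$, and modulo $P_\theta$ it projects to the $X^2Y^2$ coefficient, which is scaled by $1$ against $A$'s $e^{4|s|}$. The remaining components of $Q_\theta$, on $X^3Y$ and $X^4$ relative to $P_\theta$, must be checked. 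Since $r'_\theta$ is arbitrary, I would first handle $\theta=0$ exactly, where the distance is exactly $4h/\phi'(\rho(g_s)(X^4+Y^4))\sim h e^{-4|s|}$. For general $\theta$, the contribution of the $X^3Y$ coefficient, of size $h e^{2|s|}$, divided by $e^{4|s|}$ gives only $h e^{-2|s|}$. This suggests the honest bound may require $\theta$ near the face direction, or otherwise a weaker exponent. Resolving this, namely whether the components of $B$ along $X^3Y$ and $XY^3$ cancel after projecting modulo $A$, is the step I expect to be hardest, and I would attack it by computing the first-order deformation explicitly for the rotated polynomials.
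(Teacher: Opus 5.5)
Your proposal is not a proof: it stops at the step you flag yourself. That step cannot be completed, because the cancellation you hope for does not occur, and the statement as written is false.

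The paper's proof is exactly your $\theta=0$ computation. It writes $x=g[X^4+Y^4]$ and says that, by the Cartan decomposition and $\rho(\SO(2))$-invariance of the Euclidean metric, one may assume $g=g_{t/4}$. This silently drops the right $\SO(2)$-factor. As you correctly observed, that factor cannot be absorbed, since the stabiliser of $[X^4+Y^4]$ is only the order-two group $S$.

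Here is the computation you propose, carried out. Write $P_\theta=r_\theta(X^4+Y^4)$ and $Q_\theta=r_\theta(X^2Y^2)$. Their $X^4$- and $X^3Y$-coefficients are $a_\theta=1-\tfrac12\sin^22\theta$ and $b_\theta=\sin4\theta$ for $P_\theta$, and $q_\theta=\tfrac14\sin^22\theta$ and $-\tfrac12 b_\theta$ for $Q_\theta$. Let $g_s=\rho(\mathrm{diag}(e^s,e^{-s}))$ with $s\geq0$, $x_s=g_sr_\theta[X^4+Y^4]$, and let $y_s=g_sr_\theta[X^4+Y^4-4hX^2Y^2]$ be the point at height $h$ above it. Work in the affine chart where the $X^4$-coefficient equals $1$. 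Near $[X^4]$ this chart is bi-Lipschitz to the paper's chart, and $x_s,y_s$ accumulate there whenever $a_\theta-4hq_\theta$ is bounded below. Using $a_\theta+2q_\theta=1$, the $X^3Y$-coordinates of $y_s$ and $x_s$ differ by
\[
e^{-2s}\Bigl(\frac{(1+2h)\,b_\theta}{a_\theta-4hq_\theta}-\frac{b_\theta}{a_\theta}\Bigr)
=e^{-2s}\,\frac{2h\sin 4\theta}{a_\theta\,(a_\theta-4hq_\theta)} .
\]
For $\theta=\pi/8$ and $0\le h\le\tfrac12$ this is at least $\tfrac{32}{9}\,h\,e^{-2s}$. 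On the other hand $\dmax(o,x_s)=4s+O(1)$, because $x_s$ stays at bounded distance from $g_s[X^4+Y^4]$ and $\dmax([X^4+Y^4],g_s[X^4+Y^4])=4s$. So the Euclidean distance is $\asymp h\,e^{-\frac12\dmax(o,x_s)}$, and no constant $C_3$ yields $h\,e^{-\dmax(o,x_s)}$.

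Uniformity up to $h=\tfrac12$ also fails. For $\theta=\pi/4$ one has $a_\theta-4hq_\theta=\tfrac12-h$, and the point at height $\tfrac12$ above $x_s$ is $[X^2Y^2]$ for every $s$, while $x_s\to[X^4]$. Hence your proviso that $\phi'$ is comparable at both endpoints for all $h\le\tfrac12$ is false uniformly in $s$.

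Your method does prove a corrected statement. Bound each coefficient of $\phi'(A)B-\phi'(B)A$: the $e^{8s}$ terms cancel in the $X^4$-slot, and the $X^3Y$-slot is $O(h e^{6s})$. Compare this with $\phi'(A)\phi'(A+B)\gtrsim e^{8s}$, which holds for $h\le h_0<\tfrac12$. The result is that for $0\le h\le h_0<\tfrac12$ the distance is at most $C_3(h_0)\,h\,e^{-\frac12\dmax(o,x)}$. This is sharp, and the exponent $1$ holds only for $\theta\in\tfrac{\pi}{4}\Z$.

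The weaker version still suffices in the proof of Proposition~\ref{prop infinite volume}. There $h=u_\O(g_kx)\le Ce^{-\frac12 f(g_kx)}$, and $u_\O$ is bounded away from $\tfrac12$. So one needs $\tfrac12\bigl(f(g_kx)+\dmax(o,x)\bigr)\ge 2k-\mathrm{Cst}$, which is exactly the inequality $f(g_kx)+\dmax(o,x)\ge 4k-\mathrm{Cst}$ proved there. Rather than trying to force a cancellation, finish the explicit computation and prove the lemma with exponent $\tfrac12$ and $h$ bounded away from $\tfrac12$.
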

 \begin{proof}
     There is $g\in\rho(\SL_2(\R))$ such that $x=g[X^4+Y^4]$.
     Since the Euclidean metric is $\rho(\mr{SO}(2))$-invariant, using Cartan decomposition we may assume $g=g_{t/4}$ where $t= \dmax ([X^4+Y^4])$.
     
     Thus $x=g_{t/4}[X^4+Y^4]=[X^4+e^{-2t}Y^4]$ and the point $y$ at height $h$ above $x$ is $y=g_{t/4}[X^4+Y^4-4hX^2Y^2]=[X^4+e^{-2t}Y^4-4he^{-t}X^2Y^2]$, which is clearly at Euclidean distance at most a constant times $he^{-t}$ from $x$, and from $[X^4]$ too actually, since the affine chart contains the closure of $\mc O_{\max}$.
 \end{proof}
 
 \subsection{Proof of points (c), 3.\ and 4.\ of Theorem~\ref{thm:non_implication}}

The point (c) of Theorem~\ref{thm:non_implication} is an immediate consequence of Propositions~\ref{prop not hyperbolic}, \ref{prop infinite volume} and \ref{prop:construction of convex sets}.

\appendix

\section{Smoothing convex sets}

The goal is to check that the smoothing procedure of Danciger--Guéritaud--Kassel \cite[\S 9]{DGK17} extends to certain noncompact convex projective manifolds.
We will also check that their technique can be combined with Cooper--Long--Tillman's \cite[\S8]{CLT18} to produce a manifold whose boundary Hessian-convex, in the sense smooth with positive Hessian.
Unfortunately, the general result \cite[Prop.\,8.3]{CLT18} of Cooper--Long--Tillman that we wanted to use is not true, a counterexample being the tetrahedron with two vertices removed.
After an email exchange with Cooper, we learned that their result probably works under the additionnal assumptions that totally geodesic pieces in the boundary of the universal cover are compact.
This property is indeed verified in \cite[\S 9]{DGK17} and our case below, and plays an important role.

The construction splits into two steps: first making a convex set with differentiable boundary by taking convex hulls of convex sets with differentiable boundaries, and then sand this down to something Hessian-convex.

The following is a generalisation of the fact that given finitely many convex sets with $\mc C^1$ boundaries, their convex hull also has $\mc C^1$ boundary.

\begin{fact}\label{cvx hull of smooth is smooth}
 Let $\mc B$ be a family of compact convex subsets of $\R^n$ with nonempty interior and $\mc C^1$ boundaries, and $\Lambda\subset \R^n$ compact such that $\Lambda\cup\bigcup_{B\in\mc B}B$ is compact.
 Let $\mc C_1$ be the convex hull of $\Lambda$ and $\mc C_2$ the convex hull of $\Lambda\cup\bigcup_{B\in\mc B}B$.
 
 Then $\partial\mc C_2$ is differentiable at any point $x\in(\partial\mc C_2)\smallsetminus\mc C_1$.
\end{fact}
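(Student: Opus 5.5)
To prove Fact~\ref{cvx hull of smooth is smooth}, I would show that at every $x\in\partial\mc C_2\smallsetminus\mc C_1$ the convex body $\mc C_2$ has a unique supporting hyperplane. Differentiability of the boundary of a convex body at a point is equivalent to uniqueness of the supporting hyperplane there, so this suffices. First, $\mc C_2$ is compact: it is the convex hull of the compact set $K=\Lambda\cup\bigcup_{B\in\mc B}B$, and by Carathéodory every point of $\mc C_2$ is a convex combination of at most $n+1$ points of $K$.

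Fix $x\in\partial\mc C_2\smallsetminus\mc C_1$ and a supporting hyperplane $H$ at $x$. Write $x=\sum_{i}\lambda_iy_i$ with $\lambda_i>0$ and $y_i\in K$. Since $H$ supports $\mc C_2$ and $x\in H$, every $y_i$ lies in $H\cap K$. If all the $y_i$ were in $\Lambda$, then $x$ would lie in $\mc C_1$, which is excluded. Hence some $y_{i_0}$ lies in $B\smallsetminus\Lambda$ for some $B\in\mc B$. The set $\Lambda\cap B$ does not matter here: all we need is some $y_{i_0}\in B\cap H$ with $\lambda_{i_0}>0$ and $y_{i_0}$ in some $B\in\mc B$. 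If all the $y_i$ with $\lambda_i>0$ lay in $\Lambda$, we would get $x\in\mc C_1$, so we may assume $y_{i_0}\in B$ for some $B\in\mc B$.

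Next, $H$ supports $B$ at $y_{i_0}$, because $B\subset\mc C_2$ lies on one side of $H$ and $y_{i_0}\in H$. Since $B$ is a convex body with $\mc C^1$ boundary, $H$ is forced to be the tangent hyperplane $T_{y_{i_0}}\partial B$. The main obstacle is that the decomposition of $x$, and hence $y_{i_0}$ and $B$, might a priori depend on $H$. To remove this dependence, I fix one decomposition $x=\sum\lambda_iy_i$ before choosing $H$, and use the observation above: any supporting hyperplane $H$ at $x$ contains all the $y_i$. With the decomposition fixed, the index $i_0$ with $y_{i_0}\in B\in\mc B$ is also fixed, and every supporting hyperplane at $x$ must equal $T_{y_{i_0}}\partial B$. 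Therefore the supporting hyperplane at $x$ is unique, and $\partial\mc C_2$ is differentiable at $x$.

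The only remaining care is the existence of such a decomposition with some $y_i$ in a member of $\mc B$, which again follows from $x\notin\mc C_1$. One subtlety is the case $y_{i_0}\in B\cap\Lambda$. This is harmless, because membership in $B$ is all that is used, and the tangent plane of $B$ at $y_{i_0}$ is unique.
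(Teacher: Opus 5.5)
Your proof is correct and is essentially the paper's argument. The paper takes an extreme point $y\notin\mc C_1$ of the face of $\mc C_2$ containing $x$, whereas you take a point $y_{i_0}\notin\Lambda$ from a representation of $x$, fixed in advance, as a convex combination of points of $\Lambda\cup\bigcup_{B\in\mc B}B$; in both cases that point lies on every supporting hyperplane at $x$ and on $\partial B$ for some $B\in\mc B$, so uniqueness of the tangent hyperplane of $B$ there forces uniqueness at $x$, and your variant only needs the definition of the convex hull rather than extreme-point theory (faces being hulls of their extreme points, extreme points of $\mc C_2$ lying in $\Lambda\cup\bigcup_{B\in\mc B}B$).
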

\begin{proof}
Consider two supporting hyperplanes $H,H'$ of $\partial\mc C_2$ at $x$, and let us show $H=H'$.

Let $F$ be the closed face of $\partial \mc C_2$ containing $x$.
Then $H$ and $H'$ are supporting hyperplanes at any point of $F$.
Since $x\not\in\mc C_1$, there must be a point $y\in F$ which is extremal for $\mc C_2$ and is not in $\mc C_1$.
In particular $y\in\bigcup_{B\in\mc B}B$, so $y\in B$ for some $B\in\mc B$, and in fact we must have $y\in\partial B$.
As we said $H$ and $H'$ are supporting hyperplanes of $\mc C_2$ at $y$, and hence they are also supporting hyperplanes of $B$ at $y$ (since $B\subset\mc C_2$).
Since $\partial B$ is differentiable at $y$, we conclude that $H=H'$.
\end{proof}

\begin{lemma}\label{lem smoothing}
    Let $\mc C\subset\R\PP^n$ be a locally closed (open in closure) properly convex set with nonempty interior and $\Lambda=\overline{\mc C}\smallsetminus\mc C$, such that $\partial \mc C$ is differentiable at any point outside of $\Lambda$, and such that any segment of $\partial\mc C$ that intersects $\Lambda$ is contained in it.
    Let $\Gamma$ be a discrete group of projective transformations that preserves $\mc C$ and acts properly discontinuously on it.
    
    Then for any $\Gamma$-invariant closed subset $A$ in the interior of $\mc C$, there exists $A\subset\mc C'\subset\mc C$ $\Gamma$-invariant such that $\partial \mc C'$ is Hessian convex at any point outside $\Lambda$.
\end{lemma}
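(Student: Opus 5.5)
We follow the two-step strategy announced at the start of the appendix: Danciger--Guéritaud--Kassel's smoothing \cite[\S 9]{DGK17}, made equivariant by a partition of unity, then Cooper--Long--Tillman's ``sanding'' \cite[\S8]{CLT18} to get positive Hessian. The noncompactness of $\mc C/\G$ is handled by making every perturbation local: near each point of $\partial\mc C\smallsetminus\Lambda$ it is supported in a region whose size shrinks as the point approaches $\Lambda$.

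\textbf{Step 1: local convex functions.} Fix an affine chart containing $\overline{\mc C}$. Work locally near a point $x\in\partial\mc C\smallsetminus\Lambda$. Write $\partial\mc C$ near $x$ as the graph of a convex $\mc C^1$ function $F$ over the tangent hyperplane; differentiability of $\partial\mc C$ outside $\Lambda$ is what makes this possible. Since $A$ is closed, $\G$-invariant and lies in the interior of $\mc C$, proper discontinuity gives a $\G$-invariant continuous positive function $\delta$ on $\partial\mc C\smallsetminus\Lambda$. We choose $\delta$ so that the region of $\mc C$ within $\delta(x)$ of $x$ in some auxiliary $\G$-invariant sense misses $A$. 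One option for this sense: take the Hilbert metric of the interior of $\mc C$, pushed to a neighborhood of the boundary via rays from a $\G$-invariant family of base points.

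\textbf{Step 2: modify $F$ and glue.} We replace $F$ by $F+\varphi$, where $\varphi\geq0$ is small, smooth and has positive Hessian, in the spirit of adding $\eps|y|^2$ to the defining function. We then regularise by convolution at scale $\delta$; convolution preserves convexity, and adding a strictly convex term gives positive Hessian. To globalise, we choose a locally finite $\G$-equivariant cover of $\partial\mc C\smallsetminus\Lambda$. Such a cover exists: $\G$ acts properly discontinuously on $\mc C$, so we can pick a countable locally finite cover of a fundamental region and translate it. On this cover we take a $\G$-invariant partition of unity. We then define $\mc C'$ as the intersection of the locally modified convex sets, i.e.\ as the intersection of the sublevel sets of the local Hessian-convex functions. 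An intersection of convex sets is convex, and $\G$-invariance is built in. At points covered by several charts the boundary is locally the max of finitely many smooth strictly convex functions. We fix this as in DGK by replacing max with a smoothed max, which is convex, smooth and positive-Hessian when its inputs are. The conditions $A\subset\mc C'\subset\mc C$ follow because every modification only pushes inward by at most $\delta$.

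\textbf{Step 3: the main obstacle, behaviour near $\Lambda$.} We must check that the construction does not destroy $\overline{\mc C'}\smallsetminus\mc C'=\Lambda$, i.e.\ that $\mc C'$ accumulates on all of $\Lambda$ and acquires no new boundary there. This is exactly where the hypothesis enters: any segment of $\partial\mc C$ meeting $\Lambda$ is contained in $\Lambda$. By this hypothesis, no flat piece of $\partial\mc C\smallsetminus\Lambda$ extends to $\Lambda$, so every face of $\partial\mc C\smallsetminus\Lambda$ is compact; this is the compactness of totally geodesic pieces noted after Cooper's remark. I would use this to choose $\delta(x)\to0$ as $x\to\Lambda$, in Euclidean terms, while keeping the local charts of Step 1 of uniformly controlled size relative to $\delta$. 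Flat faces are compact, so a compact neighborhood of each face can be sanded within a single chart. Near $\Lambda$ the inward push tends to $0$, so the closure of $\mc C'$ still contains $\Lambda$. I expect the delicate point to be uniformity: proving the smoothed max and convolution steps remain convex with positive Hessian when charts shrink toward $\Lambda$. I would first try to control everything in $\G$-invariant (Hilbert) terms and only convert to Euclidean smallness at the end.
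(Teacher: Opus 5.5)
\textbf{There is a genuine gap in Step~2: the passage from local to global.} A local modification of $\partial\mc C$ must agree with $\partial\mc C$ near the edge of its chart. Otherwise the modified set has a step there and is not convex.

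Your modification $F\mapsto F+\varphi$, followed by convolution, cannot achieve this. If $\varphi\geq 0$ is convex and vanishes near the edge of a convex domain, then convexity along lines forces $\varphi\equiv 0$. A partition-of-unity combination of convex functions is not convex either.

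Taking ``the intersection of the sublevel sets of the local functions'' does not repair this. Each such set lives only over its chart, and you never produce a globally convex set that coincides with $\mc C$ outside the chart.

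The smoothed max alone does not repair it either. $M_\epsilon(f_1,f_2)$ is only as regular as the inputs carrying positive weight. So at every point you would need smooth strictly convex pieces dominating the merely $\mc C^1$ function $F$ by a margin $\epsilon$, and near the chart edges, where $F$ must be recovered, this is exactly what fails.

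Two further points remain open:
\begin{itemize}
\item You never make the local modification invariant under the finite stabiliser of a chart. Since $\G$ may have torsion, this is needed, or translates of the modification disagree on overlaps.
\item In Step~3 you explicitly leave unresolved the uniformity of the construction as charts shrink toward $\Lambda$.
\end{itemize}

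\textbf{The missing idea is CLT's smoothed maximum against an auxiliary quadratic, applied in caps.}
\begin{itemize}
\item The segment hypothesis makes every maximal closed face $F$ of $\partial\mc C\smallsetminus\Lambda$ compact, with finite stabiliser $G_F$.
\item Fact~\ref{fact fixed point} then gives a $G_F$-invariant cap $D_F\supset F$. This cap is disjoint from $A$ and from $\gamma D_F$ for $\gamma\notin G_F$.
\item It also gives a $G_F$-invariant Euclidean chart in which $\partial\mc C\cap\overline D_F$ is the graph of a convex $f\leq 0$ vanishing on the rim.
\item Replace $f$ by $M_\epsilon(f,h)$ with $h(x)=a\Vert x\Vert^2-(s+t)/2$ squeezed between the levels $t<s<0$ (Fact~\ref{local smoothing}). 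The result equals $f$ on $\{f\geq s\}$, i.e.\ near the rim, and equals $h$ on $\{f<t\}$.
\item Consequently the new set is convex and coincides with $\mc C$ outside $\G\cdot D_F$. It is Hessian convex on a prescribed compact $U_F$, and keeps Hessian convexity wherever it already held.
\item Apply this (Corollary~\ref{cor local smoothing}) inductively along a countable family $(U_{F_k})$ that is locally finite in $(\partial\mc C\smallsetminus\Lambda)/\G$, and set $\mc C'=\bigcap_k\mc C_k$. This intersection stabilises locally.
\end{itemize}
Every modification is supported in caps away from $\Lambda$, so no uniform estimate near $\Lambda$, and no convolution, is ever needed.
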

The proof is a combination of ideas from \cite[\S 9]{DGK17} and a local smoothing procedure of \cite[\S8]{CLT18} that we reprove because we very slightly changed it, and checked it also applies in cases with torsion.

\begin{fact}[{\cite[\S8]{CLT18}}]
Let $\epsilon>0$ and $N_\epsilon:\R\to\R$ a smooth, convex and even function such that $N_\epsilon(x)=\vert x\vert$ if $\vert x\vert\geq \epsilon$.
Set $M_\epsilon(x,y)=\tfrac12(x+y+N_\epsilon(x-y))$, which is a smooth convex function $\R^2\to\R$ such that
\begin{itemize}
    \item $M_\epsilon(x,y)=M_\epsilon(y,x)$ and $M_\epsilon(x+t,y+t)=M_\epsilon(x,y)+t$ for all $x,y,t$
    \item $M_\epsilon(x,y)=\max(x,y)$ if $\vert x- y\vert\geq\epsilon$
    \item $\partial_xM_\epsilon$ and $\partial_yM_\epsilon$ are nonnegative and their sum is 1.
    \item $x\leq x'$ and $y\leq y'$ imply $M_\epsilon(x,y)\leq M_\epsilon(x',y')$
\end{itemize}
As a consequence, if $f,g:U\subset\R^n\to\R$ are two convex functions then
\begin{itemize}
    \item $M_\epsilon\circ(f,g):U\to\R$ is convex
    \item if $f$ and $g$ are smooth with positive Hessian then so is $M_\epsilon\circ(f,g)$.
\end{itemize}
\end{fact}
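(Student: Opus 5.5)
The plan is to verify the listed properties of $M_\epsilon$ directly from its definition, and then get the two consequences by composing a monotone convex function with convex functions.

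\textbf{Properties of $M_\epsilon$.} Since $N_\epsilon$ is even, $M_\epsilon(x,y)=M_\epsilon(y,x)$. Since $N_\epsilon$ depends only on $x-y$, $M_\epsilon(x+t,y+t)=M_\epsilon(x,y)+t$. When $\vert x-y\vert\geq\epsilon$ we have $N_\epsilon(x-y)=\vert x-y\vert$, so $M_\epsilon=\tfrac12(x+y+\vert x-y\vert)=\max(x,y)$.

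For the partial derivatives, write $\partial_xM_\epsilon=\tfrac12(1+N_\epsilon'(x-y))$ and $\partial_yM_\epsilon=\tfrac12(1-N_\epsilon'(x-y))$. Their sum is $1$. Both are nonnegative provided $\vert N_\epsilon'\vert\leq1$. This follows from convexity: $N_\epsilon'$ is nondecreasing and equals $\pm1$ outside $[-\epsilon,\epsilon]$, so it takes values in $[-1,1]$. Nonnegativity of both partials then gives the monotonicity in each variable. Finally, $M_\epsilon$ is convex because its Hessian is $\tfrac12N_\epsilon''(x-y)\begin{pmatrix}1&-1\\-1&1\end{pmatrix}$, which is positive semidefinite; alternatively, note that $N_\epsilon$ composed with a linear map is convex.

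\textbf{Convexity of $M_\epsilon\circ(f,g)$.} This is the standard fact that a convex function which is nondecreasing in each argument, composed with convex functions, is convex. Take $t\in[0,1]$ and points $p,q\in U$. Monotonicity together with convexity of $f$ and $g$ gives
\[
M_\epsilon(f(tp+(1-t)q),g(tp+(1-t)q))\leq M_\epsilon\bigl(tf(p)+(1-t)f(q),\,tg(p)+(1-t)g(q)\bigr).
\]
Convexity of $M_\epsilon$ then bounds the right-hand side by $tM_\epsilon(f(p),g(p))+(1-t)M_\epsilon(f(q),g(q))$.

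\textbf{Positive Hessian.} This is the only step needing a computation. With $F=M_\epsilon\circ(f,g)$, the chain rule gives
\[
D^2F=\partial_xM_\epsilon\,D^2f+\partial_yM_\epsilon\,D^2g+(Df,Dg)^{\mathsf T}\,D^2M_\epsilon\,(Df,Dg).
\]
The last term is positive semidefinite by the convexity of $M_\epsilon$ shown above. The first two terms form a convex combination, with weights $\partial_xM_\epsilon,\partial_yM_\epsilon\geq0$ summing to $1$, of positive definite matrices, so it is positive definite. Hence $D^2F$ is positive definite, and $F$ is smooth as a composition of smooth maps.

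I expect no real obstacle. The point that needs care is $\vert N_\epsilon'\vert\leq1$, which comes from convexity combined with $N_\epsilon(x)=\vert x\vert$ outside $[-\epsilon,\epsilon]$. One should also note that such an $N_\epsilon$ exists, for instance by convolving $\vert x\vert$ with a smooth even bump supported in $[-\epsilon/2,\epsilon/2]$.
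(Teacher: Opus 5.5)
Your proposal is correct and follows the paper's proof essentially step for step. You verify the properties of $M_\epsilon$ directly, derive convexity of $M_\epsilon\circ(f,g)$ from monotonicity plus convexity, and use the same chain-rule decomposition of the Hessian. You also supply two details the paper leaves implicit: the bound $\vert N_\epsilon'\vert\leq 1$ (from convexity of $N_\epsilon$), and the existence of such an $N_\epsilon$.
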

\begin{proof}
Smoothness and convexity of $M_\epsilon$ follow immediately from smoothness and convexity of $N_\epsilon$.
The first three points about $M_\epsilon$ are immediate computations.
The last point about $M_\epsilon$ is a consequence of the third.

Let us check that $M_\epsilon\circ(f,g)$ is convex.
Let $p,q\in\R^n$ and $0\leq t\leq 1$.
By convexity of $f$ and $g$ we have $f(tp+(1-t)q)\leq tf(p)+(1-t)f(q)$ and $g(tp+(1-t)q)\leq tg(p)+(1-t)g(q)$, and hence by $M_\epsilon$'s last property it follows that
\begin{align*}
M_\epsilon(f(tp+(1-t)q),g(tp+(1-t)q)) 
&\leq M_\epsilon( tf(p)+(1-t)f(q), tg(p)+(1-t)g(q)) \\
&=M_\epsilon(t(f(p),g(p))+(1-t)(f(q),g(q)))\\
&\leq tM_\epsilon(f(p),g(p))+(1-t)M_\epsilon(f(q),g(q))
\end{align*}
where the last inequality comes from $M_\epsilon$'s convexity.

Finally, suppose $f,g:U\subset\R^n\to\R$ are smooth with positive Hessian, and let us check $h=M_\epsilon\circ(f,g)$ has positive Hessian.
For any vector $v\in\R^n$, we have
$$
\mr{Hess}(h)(v,v) = \partial_xM_\epsilon\mr{Hess}(f)(v,v)+\partial_yM_\epsilon\mr{Hess}(g)(v,v) + \mr{Hess}(M_\epsilon)((df(v),dg(v)),(df(v),dg(v)))
$$
The second term is nonnegative since $M_\epsilon$ is convex.
Finally, the first term is positive because $\mr{Hess}(f)(v,v)$ and $\mr{Hess}(g)(v,v)$ are positive and $\partial_xM_\epsilon$ and $\partial_yM_\epsilon$ are nonnegative and their sum is 1.
\end{proof}

\begin{fact}[{\cite[\S8]{CLT18}}]\label{local smoothing}
Let $G$ be a subgroup of $\mr O(n)$, and $K\subset\R^n$ a $G$-invariant convex compact subset with $0$ in its interior, and $f:K\to\R$ a $G$-invariant convex function which is zero on $\partial K$ and negative in the interior.
Let $\min f<t<s<0$.

Then there exists $g:K\to\R$ a convex $G$-invariant function such that
\begin{itemize}
    \item $f\leq g$ with equality on $\{f\geq s\}$
    \item if $f$ is Hessian-convex on $U\subset K$, then $g$ is Hessian-convex on $U\cup \{f<t\}$.
\end{itemize}
\end{fact}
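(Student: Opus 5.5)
The plan is to build $g$ as a smoothed maximum of $f$ with a $G$-invariant Hessian-convex function that lies below $f$ near the level $s$ and above $f$ deep inside. First pick a smooth, strictly convex, $G$-invariant function $q:\R^n\to\R$; the obvious choice is $q(x)=a\Vert x\Vert^2+b$, which is $\mr O(n)$-invariant. Its Hessian is $2a\,\mr{Id}>0$.

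We need to tune $a>0$ and $b$. Since $f$ is continuous and convex with $f=0$ on $\partial K$ and $\min f<t$, the set $\{f\geq s\}$ is a closed neighbourhood of $\partial K$ in $K$. The set $\{f\leq t\}$ is a nonempty compact convex set contained in the interior of $K$, and hence in a ball $\{\Vert x\Vert\leq\rho\}$ for some $\rho$ with $\rho<\min_{\{f\geq s\}}\Vert x\Vert$ up to shrinking, hmm. This is not automatic, since $\{f\le t\}$ need not be centered at $0$.

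We therefore use a more robust choice. Let $m=\min f$ and $D=\max_K\Vert x\Vert^2$, and set $q(x)=\frac{t+m}{2}+\delta\Vert x\Vert^2$ with $\delta>0$ so small that $\delta D<\frac{t-m}{4}$ and $\frac{t+m}{2}+\delta D+\epsilon< s$.

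Then on $K$ we have $q<s-\epsilon$. On the region $\{f\geq s\}$ this gives $f-q>\epsilon$, so $M_\epsilon(f,q)=f$ there. At the minimum point $q\ge \frac{t+m}2>m=f$, so $q$ genuinely takes over somewhere, but $q$ dominates only where $f$ is close to $m$, not on all of $\{f<t\}$. To fix this, we iterate or replace $q$ by a family.

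The cleaner route is to take $g=M_\epsilon(f,q)$ with $q=c+\delta\Vert x\Vert^2$ and choose $c$ slightly less than $t$, with $\delta$ tiny. Then on $\{f<t-2\epsilon\}$ we have $q>f+\epsilon$ for suitable constants, so $g=q$ there and $g$ is Hessian-convex. On $\{f\ge s\}$ we have $g=f$. On the intermediate region, $g$ is Hessian-convex wherever $f$ is, by the Fact on $M_\epsilon$.

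Taking $\epsilon$ small and replacing $t$ by a slightly smaller value before the construction gives the exact statement with $\{f<t\}$. By the Fact on $M_\epsilon$, $g$ is convex and $G$-invariant, since both $f$ and $q$ are. We also have $g\geq\max(f,q)\geq f$, because $M_\epsilon\ge\max$ by the third property and symmetry.

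The remaining claim is Hessian-convexity on $U\cup\{f<t\}$. On $U$, away from where $q$ dominates, this follows from the Fact, because both arguments of $M_\epsilon$ are Hessian-convex there. The main obstacle is bookkeeping the constants so that $q<f-\epsilon$ on $\{f\ge s\}$ and $q>f+\epsilon$ on $\{f<t\}$ simultaneously. This works because $t<s$ leaves a gap of width $s-t$, and $\delta\Vert x\Vert^2\le\delta D$ can be made smaller than a quarter of that gap.
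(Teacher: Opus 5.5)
Your final construction $g=M_\epsilon\circ(f,q)$ with $q=c+\delta\Vert x\Vert^2$ is the paper's argument, but the constants you commit to do not give the stated conclusion. With $c$ slightly less than $t$ you only get $q>f+\epsilon$, hence $g=q$, on $\{f<t-2\epsilon\}$. Your patch of ``replacing $t$ by a slightly smaller value'' goes the wrong way: it shrinks this set rather than enlarging it to $\{f<t\}$. The fix is to put $c$ strictly inside the gap, as the paper does. Take $\epsilon=(s-t)/4$ and $q=\tfrac{s+t}{2}+\delta\Vert x\Vert^2$ with $\delta\max_K\Vert x\Vert^2<\epsilon$, so that $t+\epsilon<q<s-\epsilon$ on $K$. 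Then $g=f$ on $\{f\geq s\}$ and $g=q$ on $\{f<t\}$ in one stroke. (The $-(s+t)/2$ in the paper's displayed formula is a sign typo.) Your closing paragraph states exactly these two inequalities, but your explicit choice of $c$ does not satisfy them. A minor point: $M_\epsilon\geq\max$ does not follow from the third property and symmetry alone. You also need the second property ($M_\epsilon=\max$ when $|x-y|\geq\epsilon$) combined with monotonicity, or directly $N_\epsilon\geq|\cdot|$ by convexity.
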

\begin{proof}
 Let $\epsilon=(s-t)/4$.
 Take $h:K\to\R$ a Hessian-convex $\mr O(n)$-invariant function of the form $h(x)=a\Vert x\Vert^2-(s+t)/2$ with $a$ small enough so that $t+\epsilon<h(x)<s-\epsilon$ on $K$.
 Then $g=M_\epsilon\circ(f,h)$ satisfies the conclusion of the fact.
\end{proof}

Given a compact convex set $\mc C$ with nonempty interior, a \emph{cap} is a connected component of $\mc C\smallsetminus H$ where $H$ is a hyperplane going through the interior of $\mc C$.

\begin{corollary}\label{cor local smoothing}
    Let $\mc C$ be a locally closed properly convex set with nonempty interior and $\Lambda=\overline{\mc C}\smallsetminus\mc C$.
    Let $\Gamma$ be a discrete group of projective transformations that preserves $\mc C$ and acts properly discontinuously on it, and $G\subset \Gamma$ finite.
    Let $D\subset\mc C$ be a $G$-invariant cap such that $D\cap \gamma D=\emptyset$ for any $\gamma\in\Gamma\smallsetminus G$.
    Finally let $U\subset \partial\mc C\cap D$ be compact.
    
    Then there is a $\Gamma$-invariant convex subset $\mc C'\subset\mc C$ and a $\Gamma$-equivariant homeomorphism $\phi:\mc C\to\mc C'$ such that 
    \begin{itemize}
        \item $\phi(x)=x$ for any $x\not\in\Gamma\cdot D$;
        \item $\partial\mc C'$ is Hessian convex at any point of $\Gamma\cdot \phi(U)$;
        \item if $\partial\mc C$ is Hessian convex at $x\not\in\Lambda$ then $\partial\mc C'$ is Hessian convex at $\phi(x)$.
    \end{itemize}
\end{corollary}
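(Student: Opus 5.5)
The plan is to apply Fact~\ref{local smoothing} in an affine chart adapted to the cap $D$, and then extend the result $\Gamma$-equivariantly. Since $D$ is a cap, there is a hyperplane $H$ meeting the interior of $\mc C$ with $D$ a component of $\mc C\smallsetminus H$. We choose $H$ $G$-invariant; if the given cap's hyperplane is not, we average the defining linear form over the finite group $G$, which is possible because $G$ preserves $D$. We then pick a $G$-invariant affine chart in which $H$ becomes a horizontal hyperplane. Using the finite group $G$ again, we fix a $G$-invariant point in the interior of $\overline D\cap H$; this exists because $\overline D\cap H$ is a $G$-invariant convex set with nonempty relative interior, and its barycenter works. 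We also fix a $G$-invariant Euclidean structure on $H$. After a projective change of coordinates we may then assume the following:
\begin{itemize}
\item $G$ acts linearly by elements of $\mr O(n-1)$ on $H\cong\R^{n-1}$ and preserves the transverse direction;
\item $\overline D$ is the region $\{(v,z): v\in K,\ f(v)\leq z\leq 0\}$ for the compact convex set $K=\overline D\cap H$, up to sign convention;
\item $f:K\to\R$ is a $G$-invariant convex function, zero on $\partial K$, negative inside.
\end{itemize}
The last point uses that $\partial\mc C\cap D$ is the graph of a convex function over the interior of $K$.

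Next we choose the parameters. The compact set $U\subset\partial\mc C\cap D$ projects to a compact subset of the interior of $K$, where $f$ is bounded above by some $t<0$. We take $\min f<t<s<0$, shrinking $t$ if necessary so that $U$ still lies over $\{f<t\}$. Fact~\ref{local smoothing} then gives a $G$-invariant convex $g\geq f$ that equals $f$ on $\{f\geq s\}$ and is Hessian-convex on $\{f<t\}$, as well as wherever $f$ already was. We define $\mc C'$ by replacing $D$ with the epigraph-type region $\{g\leq z\leq 0\}$ and doing the same on each translate $\gamma D$ using $\gamma g\gamma^{-1}$. This is well defined because $D\cap\gamma D=\emptyset$ for $\gamma\notin G$ and $g$ is $G$-invariant. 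The map $\phi$ is the vertical rescaling on $D$, sending $(v,z)$ to $(v,\,z\,g(v)/f(v))$ for $z\leq 0$, which restricts to the identity near $\partial K$. We extend $\phi$ equivariantly and by the identity outside $\Gamma\cdot D$.

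Then we verify the three properties. The identity outside $\Gamma\cdot D$ holds by construction. Hessian convexity on $\Gamma\cdot\phi(U)$ follows because $U$ lies over $\{f<t\}$. Preservation of Hessian convexity at points outside $\Lambda$ holds inside $D$ by the second bullet of the Fact. Outside $\Gamma\cdot D$ it is trivial, because $g=f$ near $\partial K$ and so the boundary is unchanged near the rim of the cap.

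The main obstacle is \emph{global convexity} of $\mc C'$. Local convexity is clear: inside each translate of $D$ the new boundary is the graph of a convex function, and near the rim $\{f\geq s\}$ nothing changed. So $\mc C'$ is a connected, locally convex closed subset of a properly convex set, with interior contained in $\mc C$. We would invoke the Tietze–Nakajima principle that a connected, closed, locally convex set is convex. Alternatively, and more simply, we can observe that $\mc C'=\mc C\cap\bigcap_{\gamma}\gamma E$, where $E$ is the convex region above the graph of $g$ extended by the half-space $\{z\geq 0\}$ side. The difficulty there is checking that $E$ is genuinely convex across $H$, which again reduces to $g=f$ near $\partial K$. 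The homeomorphism claim needs $g<0$ inside $K$, which follows from $g=f$ near $\partial K$ together with convexity of $g$ and $g\leq \max(f,h)<0$ by construction.
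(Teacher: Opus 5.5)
There is a genuine gap at the step you handle with ``after a projective change of coordinates we may then assume''. That step asserts that in a $G$-invariant chart where $H$ is horizontal and $G$ preserves a transverse direction, $\overline D=\{(v,\tau):v\in K,\ f(v)\le\tau\le0\}$ with $K=\overline D\cap H$. In other words, it asserts that $\partial\mc C\cap\overline D$ is a graph over the base. Projecting along a transverse direction does not give this in general. Take $\mc C$ the unit disc, $G$ trivial, $H=\{y=-\tfrac12\}$ and $D=\mc C\cap\{y>-\tfrac12\}$. Along the Euclidean normal to $H$, which is a legitimate $G$-invariant choice, the points $(\pm1,0)\in\partial\mc C\cap\overline D$ project outside $K=[-\tfrac{\sqrt3}{2},\tfrac{\sqrt3}{2}]\times\{-\tfrac12\}$. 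Even for thin caps, a slanted transverse direction fails in the same way. Fact~\ref{local smoothing} needs exactly a $G$-invariant convex $f$ on $K$ vanishing on $\partial K$, so this is the heart of the matter, and building this chart is the bulk of the paper's proof.

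The missing idea is to project \emph{centrally from a $G$-fixed point $z\in\mathrm{int}(\mc C)\smallsetminus\overline D$}. By convexity, the segment from $z$ to any point of $\partial\mc C\cap\overline D$ crosses $H$ inside $H\cap\overline{\mc C}=K$. Every line through $z$ meets $\partial\mc C$ exactly twice. Hence $\partial\mc C\cap\overline D$ is the graph of a convex $f$ over $K$. One then sends to infinity a $G$-invariant hyperplane $H'$ through $z$ that misses $\overline D$, so that lines through $z$ become vertical and $G$ acts by isometries.

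The paper builds these as follows. It takes $z$ on the line through a $G$-fixed point $x$ of the base and the point $y$ orthogonal to $H$. This line is fixed pointwise by $G$, since a reflection of it would swap the sides of $H$ and so could not preserve $D$. It then sets $H'=\Span(z,L)$, with $L\subset H$ a $G$-invariant codimension-one subspace missing the base (Fact~\ref{fact fixed point}). Your setup already contains these ingredients. Your $G$-invariant transverse line through the $G$-fixed point of $K$ is fixed pointwise for the same reason. So you can take $z$ on it just across $H$ from $D$, and $H'$ the hyperplane through $z$ parallel to $H$ in your invariant chart.

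With that chart, the rest of your plan is the paper's argument: choosing $\max_U f<t<s<0$, replacing each $\gamma D$ by $\gamma D'$, and the vertical rescaling for $\phi$. Two minor corrections:

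1. $M_\epsilon\geq\max$, so ``$g\le\max(f,h)$'' is backwards. Still, $g<0$ inside $K$ does follow from convexity of $g$ together with $g=f<0$ near $\partial K$.
2. For convexity of $\mc C'$, write $\mc C'=\bigcap_{\gamma\in\Gamma/G}\gamma\bigl(\mc C\smallsetminus(D\smallsetminus D')\bigr)$ and check a single cap. A segment of $\mc C$ from a point off $D$ to a point of $D'$ crosses $H$ in $K$ and then stays in the convex region above the graph of $g$. This is better than applying Tietze--Nakajima to $\mc C'$ directly, since $\mc C'$ is not closed and its modified caps may accumulate at $\Lambda$.
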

\begin{proof}
    Let $H$ be the hyperplane through the interior of $\mc C$ such that $D$ is a component of $\mc C\smallsetminus H$.
    Since $D$ is $G$-invariant, so are $H$ and $H\cap \mc C$ (the base of the cap).
    Now we apply the following standard fact.
    
    \begin{fact}\label{fact fixed point}
        If a finite group of projective transformation $G'$ preserves a subset $A\subset\R\PP^n$ which is contained in some affine chart and convex (but not necessarily bounded), then $G'$ fixes a point of the relative interior of $A$.
        
        If moreover $A$ is bounded in the affine chart, then $G'$ preserves a hyperplane that does not intersect $\bar A$.
    \end{fact}
    \begin{proof}
        Let $x$ be in the relative interior of $A$.
        Then $G'\cdot x$ is a finite $G'$-invariant set, and its convex hull is a compact convex $G'$-invariant subset $K$ of the relative interior of $A$.
        We can reiterate this: let $y\in\mr{int}(K)$ and $K'$ the convex hull of $G'\cdot y$ which is a compact convex $G'$-invariant subset of the properly convex set $K$.
        Then any center of mass of $K'$ coming from the Hilbert geometry of $K$ (see e.g.\ \cite[Lem.\,4.2]{MarquisHandbook}) is a fixed point for $G'$.
        
        To construct the invariant hyperplane, consider the action of $G'$ on the set of hyperplanes, which is the projective space of the dual of $\R^{n+1}$.
        There the set of hyperplanes not intersecting $\bar A$ is convex and contained in an affine chart, for instance the affine chart of all hyperplanes not containing a given point $a\in A$.
        Thus we can apply the first part of the fact.
    \end{proof}
    
    By the above Fact~\ref{fact fixed point}, as $G$ is finite it fixes a point $x$ in the relative interior of $H\cap \mc C$ and preserves a codimension 1 subspace $L$ of $H$ that does not intersect $H\cap \mc C$.
    Moreover, it is well known that $G$ must preserve an inner product, and hence fixes the point $y\in\R\PP^n\smallsetminus H$ that corresponds to the orthogonal of $H$.
    
    Note that every point of the line spanned by $x$ and $y$ is fixed by $G$ (indeed $G$ is finite so each of its elements acts on the line either by the identity or a reflexion, but here reflexions are prohibited as otherwise $G$ could not preserve the cap $D$).
    Take such a point $z$ which is in the interior of $\mc C$ but not in $\bar D$.
    The hyperplane $H'$ through $z$ and $L$ is $G$-invariant.
    
    Now there is a $G$-invariant affine chart $\psi:\mb R^n=\mb R^{n-1}\times\mb R\hookrightarrow\R\PP^n$ such that
    \begin{itemize}
        \item its image is $\R\PP^n\smallsetminus H'$;
        \item $z$ corresponds to the point at infinity in the vertical direction $\{0\}\times\R$;
        \item  $\mb R^{n-1}\times\{0\}$ maps to $H\smallsetminus L$
        \item $G$ acts on the affine chart by euclidean isometries;
        \item $\bar D$ is contained in this affine chart, and $\psi^{-1}(\partial\mc C\cap \bar D)$ is the graph of a nonpositive convex function $f:K\to\R$ with $K\subset\R^{n-1}$ convex.
    \end{itemize}
    Now we can apply Fact~\ref{local smoothing}: there is a convex function $g:K\to\R$ above $f$ which is equal to $f$ in the neighborhood of $\partial K$ and Hessian convex $U\cup V$ where $V$ is the set of points where $f$ is already Hessian convex.
    
    Let $D'\subset D$ be the image under the affine chart of $\{(x,t):x\in K, g(x)\leq t<0\}$.
    Then we set
    $$
    \mc C'=\left(\mc C\smallsetminus\bigcup_\gamma \gamma D\right)\cup\bigcup_\gamma \gamma D'.
    $$
    And the $\Gamma$-equivariant map $\phi:\partial\mc C\to\partial\mc C'$ is defined by $\phi(x)=x$ if $x\in\partial\mc C\smallsetminus\bigcup_\gamma \gamma D$, and $\phi(\gamma\psi(x,f(x)))=\gamma\psi(x,g(x))$ for all $x\in K$ and $\gamma\in\Gamma$.
    One easily checks that it satisfies the conclusion of the corollary.
\end{proof}

Now we can prove the lemma.
\begin{proof}[Proof of Lemma~\ref{lem smoothing}]
 Since $\partial\mc C$ is differentiable outside $\Lambda$, and any face of $\partial\mc C$ intersecting $\Lambda$ must be contained in it, we have a partition of $\partial\mc C\smallsetminus\Lambda$ by its maximal closed faces, which have the form $\partial\mc C\cap H$ where $H$ is the tangent space to $\partial \mc C$ at some point of $\partial\mc C\smallsetminus\Lambda$.
 
 For any such maximal closed face $F$, the stabiliser $G_F\subset\Gamma$ is finite and preserves the tangent space $H$ at $F$, and $F\cap\gamma F=\emptyset$ for any $\gamma\in\Gamma\smallsetminus G_F$.
 Using, as in the previous proof, Fact~\ref{fact fixed point}, $G_F$ fixes a point $x$ in the relative interior of $F$ and preserves a codimension 1 subspace $L$ of $H$ that does not intersect $F$.
 It also fixes a point $y\in \R\PP^n\smallsetminus H$, as well as any point of the line through $x$ and $y$.
 
 Now any cap of $\overline{\mc C}$ containing $F$ and separated by a hyperplane spanned by $L$ and a point of $\Span(x,y)\cap \mr{int}\mc C$ is $G_F$-invariant.
 Moreover such a cap can be taken arbitrarily close to $F$, so we can find a cap $D_F$ such that $D_F\subset\mc C$, $D_F\cap A=\emptyset$, $D_F\cap\gamma D_F=\emptyset$ for any $\gamma\in\Gamma\smallsetminus G_F$, and $\gamma D_F=D_{\gamma F}$ for any $\gamma\in\Gamma$.
 Fix a compact neighborhood $U_F$ of $F$ in $D_F\cap\partial\mc C$.
 
 The interiors of $U_F$ where $F$ runs among all maximal closed faces covers $\partial\mc C\smallsetminus\Lambda$.
 We get that the interiors of $U_F/G_F$ embedded into $\partial\mc C\smallsetminus\Lambda/\Gamma$ form a covering, and we can select a countable subcover $U_{F_1}/G_{F_1},U_{F_2}/G_{F_2},\dots$ which is locally finite.
 Now we apply inductively Corollary~\ref{cor local smoothing}.
 
 First we apply it to $\mc C$ and $D_{F_1}$ to get a $\Gamma$-invariant $\mc C_1\subset \mc C$ containing $A$ and an equivariant homeomorphism $\phi_1:\partial\mc C\to\partial\mc C_1$ that is identity outside $\Gamma\cdot D_{F_1}$ and $\partial \mc C_1$ is Hessian convex at every point of $\phi_1(\Gamma\cdot U_{F_1})$.

 Then we apply it to $\mc C_1$ and $D_{F_2}\cap \mc C_1$ to get a $\Gamma$-invariant $\mc C_2\subset\mc C_1$ containing $A$ and an equivariant homeomorphism $\phi_2:\partial\mc C_1\to\partial\mc C_2$ such that, denoting $\psi_2=\phi_2\circ\phi_1$, we have that $\partial\mc C_2$ is Hessian convex at every point of $\psi_2(\Gamma(U_{F_1}\cup U_{F_2}))$.
 
 We continue the inductive procedure: having constructed $\mc C_k$ and $\psi_k:\partial\mc C\to\partial\mc C_k$, we apply Corollary~\ref{cor local smoothing} to $\mc C_k$ and $D_{F_{k+1}}\cap\mc C_k$ to get $A\subset\mc  C_{k+1}\subset\mc C_k$ and $\phi_{k+1}:\partial\mc C_k\to\partial\mc C_{k+1}$, and then we set $\psi_{k+1}=\phi_{k+1}\circ\psi_k$, so that $\partial\mc C_{k+1}$ is Hessian convex on $\psi_{k+1}(\Gamma(U_{F_1}\cup\dots\cup U_{F_{k+1}}))$.
 
 Note that $\Lambda\subset\partial\mc C_{k+1}$ and $\psi_{k+1}=\psi_k$ outside $\Gamma D_{F_{k+1}}$, including $\Lambda$.
 Since $(U_{F_k}/G_{F_k})_k$ is locally finite in $\partial\mc C\smallsetminus\Lambda/\Gamma$, for every $k$ there is $i_k$ such that $\psi_i=\psi_{i_k}$ on $\Gamma(D_{F_1}\cup\dots\cup D_{F_k})$ for all $i\geq i_k$.
 
 Let $\mc C'=\bigcap_k\mc C_k$ which is $\Gamma$-invariant.
 We have an equivariant homeomorphism $\psi:\partial\mc C\to\partial\mc C'$ which is equal to identity on $\Lambda$ and equal to $\psi_{i_k}$ on $\Gamma\cdot D_{F_k}$ for every $k$.
 As $\partial\mc C'$ coincides with $\partial\mc C_{i_k}$ on $\psi(\Gamma\cdot U_{F_k})$, it is Hessian convex there.
 Thus $\partial\mc C'$ is Hessian convex on $\partial\mc C'\smallsetminus\Lambda$.
\end{proof}

\bibliographystyle{alpha}
{\small \bibliography{bib}}

\end{document}